\newcounter{contador}
\numberwithin{contador}{section}
\newtheorem{theorem}[contador]{Theorem}
\newtheorem{prop}[contador]{Proposition}
\newtheorem{lemma}[contador]{Lemma}
\newtheorem{corollary}[contador]{Corollary}
\theoremstyle{definition}
\newtheorem{defi}[contador]{Definition}
\newtheorem{obs}[contador]{Remark}
\newtheorem{exe}[contador]{Example}
\newcommand{\G}{\mathcal{G}}
\title{Groupoid Twisted Partial Actions}
\author[Bemm, Lautenschlaeger, Tamusiunas]
{Laerte Bemm, Wesley G. Lautenschlaeger, Tha\'{\i}sa Tamusiunas}
\address{ Departamento de Matem\'{a}tica, Universidade Estadual do Maring\'{a}, Av. Colombo, 5790 - Campus Universit\'{a}rio, 87020-900, Maring\'{a}-PR, Brazil} \email{lbemm2@uem.br}
\address{Instituto de Matem\'{a}tica, Universidade Federal do Rio Grande do Sul,  Av. Bento Gon\c{c}alves, 9500, 91509-900. Porto Alegre-RS, Brazil} \email{wesleyglautenschlaeger@gmail.com, thaisa.tamusiunas@gmail.com}
\begin{document}

\begin{abstract}
   We introduce twisted partial actions of groupoids and establish necessary and sufficient conditions for the existence of a globalization. We prove that the crossed products associated with a twisted partial action and with its globalization are Morita equivalent. Finally, we extend partial projective representations and partial Schur multipliers to the groupoid setting and describe their interaction with twisted partial actions of groupoids.
\end{abstract}
  
\maketitle

\vspace{0.5 cm}

\noindent \textbf{2020 AMS Subject Classification:} Primary 20N02. Secondary  16S35, 20C25.

\noindent \textbf{Keywords:} groupoid action, groupoid twisted partial action, globalization, Morita equivalence, partial projective representation.

\section{Introduction}
Twisted partial actions of groups on abstract rings and the corresponding crossed products were introduced by Dokuchaev, Exel, and Simón in \cite{dokuchaev2008crossed}. In \cite{dokues}, the same authors established criteria for the existence of a globalization for a given twisted partial action of a group on a ring. With the aim of intrinsically relating partial representations with partial actions, in \cite{dokuchaev2010partialprojectiverep} and \cite{dokuchaev2012partialprojectiverep2} the authors introduced the concept of a partial projective representation of a group. In \cite{dokuchaev2010partialprojectiverep}, it was shown that the factor sets of partial projective representations over a field $\mathbb{K}$ are exactly the $\mathbb{K}$-valued twistings of crossed products by partial actions.

A groupoid is a small category in which every morphism has an inverse. When a groupoid has only one object, it reduces to a group. We recall in Subsection 2.1 a purely algebraic definition of groupoids that is convenient for our purposes. Partial actions of groupoids were originally motivated by the generalization of the notion of partial Galois extensions of commutative rings and were first introduced by Bagio and Paques in \cite{bagio}. Since both groupoid partial actions and twisted partial actions of groups arise as particular cases of twisted partial actions of groupoids, it is natural to develop a unified algebraic theory for groupoid twisted partial actions.

The main purpose of this paper is to extend the notion of twisted partial actions from groups to groupoids and to analyze when a given twisted partial action of a groupoid admits a globalization, that is, when it can be realized as the restriction of a global action. While the resulting criteria are closely related to those in the group case, the proofs require more delicate arguments, since one must deal with ideals of ideals rather than ideals of the ambient ring, and since compositions of actions are only partially defined for certain composable pairs in the groupoid. We further show that, in the groupoid setting, there remains a meaningful interaction between partial projective representations and twisted partial actions, which is developed in Section 7.

The paper is organized as follows. We begin by recalling some basic notions and properties of groupoids and semigroupoids. In Section 3 we introduce twisted partial actions of groupoids on rings and their globalizations. In Theorem \ref{teoglobal} we establish necessary and sufficient conditions for a twisted partial action to be globalizable, generalizing \cite[Theorem 4.1]{dokues}. In Section 4 we define the twisted crossed product, and Theorem \ref{morita} shows that the crossed product of a (globalizable) twisted partial action is Morita equivalent to the crossed product of its globalization. In Section 5 we introduce groupoid partial projective representations and study the structure of the associated partial Schur multiplier. In Section 6 we define twisted partial actions of groupoids on semigroups, and in Proposition \ref{propcompalgmon} we relate twisted partial actions on rings to twisted partial actions on semigroups. Finally, in Section 7 we establish the connection between twisted partial actions and partial projective representations; see Theorem \ref{teoprincipal}.

By convention, throughout this paper rings and algebras are associative and not necessarily unital.

\section{Preliminaries}

The background on groupoids and semigroupoids is reviewed here.

\subsection{Groupoids}
Recall that a \emph{groupoid} is a nonempty set $\G$, equipped with a partially defined binary operation, which we will denote by concatenation, that satisfies the associative law and the condition that each element $g \in \G$ has a right and a left identity and an inverse $g^{-1}$. Given $g \in \G$, the right identity will be called \emph{domain} and the left identity will be called \emph{range} of $g$, and they will be denoted by $d(g)$ and $r(g)$, respectively. Hence, $d(g) = g^{-1}g$, $r(g) = gg^{-1}$ and $d(g) = r(g^{-1})$, for all $g \in \G$. This definition appears, for instance, in \cite[p. 78]{lawson1998inverse}. 

For all $g, h \in \G$, we write $\exists gh$ whenever the product $gh$ is defined. As it was observed in \cite[Lemma 3.1]{pata}, it is immediately from the definition that for all $g, h \in \G$, $\exists gh$ if and only if $d(g) = r(h)$, and in this case $d(gh) = d(h)$ and $r(gh) = r(g)$. It will be denoted by $\G^{(2)}$ the subset of the pairs $(g, h) \in \G \times \G$ such that $d(g) = r(h)$. 

An element $e \in \G$ is called an identity of $\G$ if $e = d(g) = r(g^{-1})$, for some $g \in \G$. It will be denoted by $\G_0$ the set of all identities of $\G$. 

\subsection{Semigroupoids}
We briefly recall the notions of semigroupoids, inverse semigroupoids, and inverse categories, which provide the natural framework for partial projective representations and will be used in Section 5 to relate such representations to twisted partial actions.


A \emph{semigroupoid} $\mathcal{S}$ is a set endowed with a partially defined binary operation that is associative whenever it is defined. We denote by $\mathcal{S}^{(2)} \subseteq \mathcal{S} \times \mathcal{S}$ the set of composable pairs in $\mathcal{S}$. We often write $\exists\, st$ to indicate that $(s,t) \in \mathcal{S}^{(2)}$. A \emph{semigroupoid homomorphism} $\varphi \colon \mathcal{S} \to \mathcal{T}$ is a map such that, whenever $(s,t) \in \mathcal{S}^{(2)}$, one has $(\varphi(s),\varphi(t)) \in \mathcal{T}^{(2)}$ and $\varphi(st) = \varphi(s)\varphi(t)$.

Let $\mathcal{S}$ be a semigroupoid. A subset $\mathcal{I} \subseteq \mathcal{S}$ is called a \emph{left ideal} if, whenever $t \in \mathcal{I}$ and $s \in \mathcal{S}$ are such that $(s,t) \in \mathcal{S}^{(2)}$, it follows that $st \in \mathcal{I}$. Right ideals and two-sided ideals are defined analogously, and a two-sided ideal will simply be called an \emph{ideal}.

A \emph{category} is a semigroupoid $\mathcal{C}$ such that for each $x \in \mathcal{C}$ there exist unique identities $\mathrm{r}(x), \mathrm{d}(x) \in \mathcal{C}$ satisfying $(\mathrm{r}(x),x), (x,\mathrm{d}(x)) \in \mathcal{C}^{(2)}$. With this notation, given $x,y \in \mathcal{C}$, the pair $(x,y)$ is composable if and only if $\mathrm{d}(x) = \mathrm{r}(y)$. We denote by $\mathcal{C}_0$ the set of identities of $\mathcal{C}$ and by $\mathcal{C}^{(n)}$ the set of $n$-tuples $(x_1,\ldots,x_n)$ for which the product $x_1 x_2 \cdots x_n$ is defined.

An \emph{inverse semigroupoid} is a semigroupoid $\mathcal{S}$ such that for every $s \in \mathcal{S}$ there exists a unique element $t \in \mathcal{S}$ with $(s,t), (t,s) \in \mathcal{S}^{(2)}$ and $sts = s$, $tst = t$. We denote this inverse by $t = s^{-1}$. An \emph{inverse category} is an inverse semigroupoid that is also a category. In particular, every groupoid is an inverse category.

\section{Twisted Partial Actions on Rings and Globalization}

Let $\mathbb{K}$ be a commutative associative unital ring, which will be the base ring for our algebras. Recall from \cite[p. 4138]{dokues} that the multiplier algebra $\mathcal{M}(A)$ of an associative, not necessarily unital, $\mathbb{K}$-algebra $A$ is the set: $$\mathcal{M}(A) = \{(R,L) \in End(_AA) \times End(A_A); (aR)b = a(Lb), \forall a, b \in A\}, $$with component-wise addition and multiplication.

If $I$ is a unital ideal of $A$, then $I \simeq \mathcal{M}(I)$ via $a \to (R_a,L_a)$, where $a \in I$ \cite[Proposition 2.3]{dokuexel}. Recall that the multiplier $(R_a,L_a)$ is given by $(b)R_a = ba$, $L_a(b) = ab$, for all $a,b \in I$. 

        \begin{defi}\label{def21} A \emph{groupoid twisted partial action of a groupoid} $\G$ \emph{on a ring} $R$ is a triple $$(\{D_g\}_{g \in \mathcal{G}}, \{\alpha_g\}_{g \in \mathcal{G}}, \{w_{g,h}\}_{(g,h) \in \mathcal{G}^{(2)}})$$ where for each $g \in \G$, $D_{r(g)}$ is an ideal of $R$, $D_g$ is an ideal of $D_{r(g)}$, $\alpha_g: D_{g^{-1}} \rightarrow D_g$ is an isomorphism of rings and for each $(g,h) \in \G^{(2)}$, $w_{g,h}$ is an invertible element from $\mathcal{M}(D_gD_{gh})$, satisfying, for all $(g,h,k)$ in $\G^{(3)}$: \begin{itemize}
                    \item [\emph{(i)}] $D_g^2 = D_g$, $D_gD_h = D_hD_g$;
                    \item [\emph{(ii)}]  $R = \sum_{e \in \G_0} D_e$ and $\alpha_e$ is the identity map of $D_e$, for all $e \in \G_0$;
                    \item [\emph{(iii)}] $\alpha_g(D_{g^{-1}}D_h) = D_gD_{gh}$;
                    \item [\emph{(iv)}] $\alpha_g \circ \alpha_h (a) = w_{g,h}\alpha_{gh}(a)w_{g,h}^{-1}$, for all $a \in D_{h^{-1}}D_{h^{-1}g^{-1}}$;
                    \item [\emph{(v)}] $w_{r(g),g} = w_{g,d(g)} = id_{\mathcal{M}(D_g)}$;
                    \item [\emph{(vi)}] $\alpha_g(aw_{h,k})w_{g,hk} = \alpha_g(a)w_{g,h}w_{gh,k}$, for all $a \in D_{g^{-1}}D_hD_{hk}$.
                  \end{itemize}
\end{defi}

Observe that if each $D_g$ is unital with identity $1_g$, then $\mathcal{M}(D_g) \cong D_g$ and in this case we identify $id_{\mathcal{M}(D_g)}$ with $1_g$.

\begin{exe}\label{ex1}
    Let $R = \bigoplus\limits_{i=1}^4 \mathbb{K}e_i$, where the $e_i$'s are central orthogonal idempotents such that $1_R = e_1 + e_2 + e_3 + e_4$. Consider $\G = \{ g, g^{-1}, r(g), d(g) \}$.
    
    An example of groupoid twisted partial action of $\G$ on $R$ is given by taking
    \begin{align*}
        D_{r(g)} = \mathbb{K}e_1 \oplus \mathbb{K}e_2, \; D_{d(g)} = \mathbb{K}e_3 \oplus \mathbb{K}e_4, \;      D_{g} = \mathbb{K}e_1, \; D_{g^{-1}} = \mathbb{K}e_3, \\    
        \alpha_{r(g)} = \text{Id}_{D_{r(g)}}, \; \alpha_{d(g)} = \text{Id}_{D_{d(g)}}, \;     \alpha_g(xe_3) = xe_1, \; \alpha_{g^{-1}}(xe_1) = xe_3.
    \end{align*}
    
    This gives us a partial groupoid action of $\G$ on $R$ in the sense of \cite{bagio}. If we consider the twisting
    \begin{align*}
        w_{r(g),r(g)}  = 1_{r(g)} = e_1 + e_2, \; &w_{d(g),d(g)}  = 1_{d(g)} = e_3 + e_4 \\
        w_{r(g),g}  = w_{g,d(g)} = 1_g = e_1, \; &w_{d(g),g^{-1}}  = w_{g^{-1},r(g)} = 1_{g^{-1}} = e_3 \\
        w_{g,g^{-1}} = -e_1, \; & w_{g^{-1},g} = -e_3,
    \end{align*}
    we have a groupoid twisted partial action $\alpha = (\{D_g\}_{g \in \mathcal{G}}, \{\alpha_g\}_{g \in \mathcal{G}}, \{w_{g,h}\}_{(g,h) \in \mathcal{G}^{(2)}})$ of $\G$ on $R$. In fact, the conditions (iv) and (v) are clear, so it only remains to us to prove (vi). The elements in $\G^{(3)}$ that involve $d(g),r(g)$ are direct. Consider the triples $(g,g^{-1},g)$ and $(g^{-1},g,g^{-1})$. Then
    \begin{align*}
        \alpha_g(xe_3w_{g^{-1},g})w_{g,g^{-1}g} = \alpha_g(xe_3w_{g^{-1},g})w_{g,d(g)} = \alpha_g(-xe_3)e_1 = -xe_1
    \end{align*}
    and
    \begin{align*}
        \alpha_g(xe_3)w_{g,g^{-1}}w_{gg^{-1},g} = \alpha_g(xe_3)w_{g,g^{-1}}w_{r(g),g} = \alpha_g(xe_3)(-e_1)e_1 = -xe_1,
    \end{align*}
    and the other triple is handled in similar way.
 \end{exe} 
 
 A twisted partial action $\beta = (E_g, \beta_g, u_{g,h})_{(g,h) \in \G^{(2)}}$ is said \emph{global} if and only if $E_g = E_{r(g)}$ for all $g \in \G$. 
 
\begin{exe}
Consider $R$ and $\G$ as in Example \ref{ex1}. Define:
    \begin{align*}
        E_g = E_{r(g)} = \mathbb{K}e_3 \oplus \mathbb{K}e_4, &\; E_{g^{-1}} = E_{d(g)}  = \mathbb{K}e_1 \oplus \mathbb{K}e_2, \\
        \beta_{r(g)} = \text{Id}_{E_g}, &\;
        \beta_{d(g)} = \text{Id}_{E_{g^{-1}}}, \\
        \beta_g(xe_1 + ye_2) = xe_3 + ye_4, &\;
        \beta_{g^{-1}}(xe_3 + ye_4) = xe_1 + ye_2.
    \end{align*}
    
    Fix $a,b \in \mathcal{U}(\mathbb{K})$ and let
    \begin{align*}
        u_{r(g), r(g)} = u_{g,d(g)} = u_{r(g),g} = 1_g = e_3 + e_4, &\;  u_{d(g),d(g)} = u_{g^{-1},r(g)} = u_{d(g),g^{-1}} = 1_{g^{-1}} = e_1 + e_2, \\
        u_{g,g^{-1}} = ae_3 + be_4 \in \mathcal{U}(E_{r(g)}), &\; u_{g^{-1},g} = ae_1 + be_2 \in \mathcal{U}(E_{d(g)}).
    \end{align*}
    
    Then $\beta = (E_g, \beta_g, u_{g,h})_{(g,h) \in \G^{(2)}}$ is a groupoid twisted global action of $\G$ on $R$.
\end{exe}

We can obtain examples of twisted partial actions by restricting a global one, in a standard way. Given a groupoid twisted global action $\beta = (\{E_g\}_{g \in \mathcal{G}}, \{\beta_g\}_{g \in \mathcal{G}}, \{w_{g,h}\}_{(g,h) \in \mathcal{G}^{(2)}})$ of $\G$ on a (non-necessarily unital) ring $S$ such that each $E_g$ is unital with identity $\overline{1}_g$, one can restricts $\beta$ to a two-sided ideal $R$ of $S$ such that $R$ has identity $1_R$. Let $D_{r(g)} = R \cap E_{r(g)} = R \cdot E_{r(g)}$, which is an ideal of $E_{r(g)}$, $g \in \G$. Note that $D_{r(g)}$ has identity $1_{r(g)} = 1_R\overline{1}_g$.

Let $D_g = D_{r(g)} \cap \beta_g(D_{d(g)}) = D_{r(g)} \cdot \beta_g(D_{d(g)})$, which is an ideal of $D_{r(g)}$, for all $g \in \G$. Observe that $1_g = 1_{r(g)}\beta_g(1_{d(g)})$. Putting $\alpha_g = \beta_g\mid_{D_{g^{-1}}}$, the itens (i), (ii) and (iii) of Definition \ref{def21} are clearly satisfied. Once each $D_g$ is unital, we got $\mathcal{M}(D_g) \cong D_g$, for all $g \in \G$. So we can consider $w_{g,h} \in D_gD_{gh}$. Defining $w_{g,h} = u_{g,h}1_g1_{gh}$, we have that (iv), (v) and (vi) are also satisfied, then we obtain, in fact, a groupoid twisted partial action on $R$.

We have given a global action and constructed a partial one. We are now interested in determining under what circumstances a given partial action can be obtained as a restriction of a global one.

\begin{defi} \label{def22} A groupoid twisted global action $\beta = (\{E_g\}_{g \in \mathcal{G}}, \{\beta_g\}_{g \in \mathcal{G}}, \{w_{g,h}\}_{(g,h) \in \mathcal{G}^{(2)}})$ of a groupoid $\G$ on a non-necessarily unital ring $S$ is said to be a \emph{globalization} for the partial action $\alpha = (\{D_g\}_{g \in \mathcal{G}}, \{\alpha_g\}_{g \in \mathcal{G}}, \{w_{g,h}\}_{(g,h) \in \mathcal{G}^{(2)}})$ of $\G$ on $R$ if, for each $e \in \G_0$, there exists a monomorphism $\varphi_e: D_e \rightarrow E_e$ such that: \begin{itemize}
                    \item [\emph{(i)}] $\varphi_e(D_e)$ is an ideal of $E_e$;
                    \item [\emph{(ii)}] $E_g = \sum_{r(h) = r(g)} \beta_h (\varphi_{d(h)}(D_{d(h)}))$;
                    \item [\emph{(iii)}] $\varphi_{r(g)}(D_g) = \varphi_{r(g)}(D_{r(g)}) \cap \beta_g(\varphi_{d(g)}(D_{d(g)}))$;
                    \item [\emph{(iv)}] $\beta_g \circ \varphi_{d(g)}(a) = \varphi_{r(g)} \circ \alpha_g(a)$, $\forall a \in D_{g^{-1}}$;
                    \item [\emph{(v)}] $\varphi_{r(g)}(aw_{g,h}) = \varphi_{r(g)}(a)u_{g,h}$, $\, \,$ $\varphi_{r(g)}(w_{g,h}a) = u_{g,h}\varphi_{r(g)}(a)$, $\forall a \in D_gD_{gh}$.
                  \end{itemize} In this case, we say that $\alpha$ is \emph{globalizable}.

\end{defi}

Suppose that each $D_{e}$, $e \in \G_0$, is unital. If $\alpha$ is a globalizable groupoid twisted partial action, then each $D_g$, $g \in \G$, is unital, since $\varphi_{r(g)}(D_g) = \varphi_{r(g)}(D_{r(g)}) \cap \beta_g(\varphi_{d(g)}(D_{d(g)}))$. In the non-twisted groupoid case \cite[Theorem 2.1]{bagio}, the converse is true. However, in the twisted case, even when the groupoid is a group \cite[Theorem 4.1]{dokues}, an extendability property of the partial twisting is required to ensure the globalization.

\begin{theorem}\label{teoglobal}
Let $\alpha = (\{D_g\}_{g \in \mathcal{G}}, \{\alpha_g\}_{g \in \mathcal{G}}, \{w_{g,h}\}_{(g,h) \in \mathcal{G}^{(2)}})$ be a groupoid twisted partial action of $\G$ over $R$ such that each $D_g$, $g \in \G$, is a unital ring. Then $\alpha$ admits globalization $\beta = (\{E_g\}_{g \in \mathcal{G}}, \{\beta_g\}_{g \in \mathcal{G}}, \{w_{g,h}\}_{(g,h) \in \mathcal{G}^{(2)}})$ if and only if for each pair $(g,h) \in \G^{(2)}$, there exists an invertible element $\tilde{w}_{g,h} \in \mathcal{U}(D_{r(g)})$ such that $\tilde{w}_{g,h}1_g1_{gh} = w_{g,h}$ and, for $k \in \G$ such that $(h,k) \in \G^{(2)}$, $$\alpha_g(\tilde{w}_{g,k}1_{g^{-1}})\tilde{w}_{g,hk} = 1_g\tilde{w}_{g,h}\tilde{w}_{gh,k}. \quad (\ast)$$
\end{theorem}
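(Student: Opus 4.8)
The plan is to prove the two implications separately; the forward direction reduces to pulling the global twisting back along the embeddings, while the substance of the theorem lies in the construction needed for the converse.

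\emph{Necessity.} Suppose $\beta$ is a globalization with monomorphisms $\varphi_e$. Since $D_{r(g)}$ is unital, $I:=\varphi_{r(g)}(D_{r(g)})$ is a unital --- hence idempotent --- ideal of $E_{r(g)}$, so the restriction of multipliers $\rho\colon \mathcal{M}(E_{r(g)})\to\mathcal{M}(I)$ is a unital ring homomorphism (it preserves $I$ because $I=I^2$), and $\mathcal{M}(I)\cong I\cong D_{r(g)}$ via $\varphi_{r(g)}^{-1}$. As the action $\beta$ is global, $E_g=E_{gh}=E_{r(g)}$, so $u_{g,h}\in\mathcal{M}(E_{r(g)})$ is invertible; I set $\tilde w_{g,h}:=\varphi_{r(g)}^{-1}(\rho(u_{g,h}))\in\mathcal{U}(D_{r(g)})$. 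Evaluating Definition \ref{def22}(v) at $a=1_g1_{gh}$ and using that $1_g,1_{gh}$ are central idempotents of $D_{r(g)}$ yields $\varphi_{r(g)}(\tilde w_{g,h}1_g1_{gh})=\varphi_{r(g)}(w_{g,h})$, hence $\tilde w_{g,h}1_g1_{gh}=w_{g,h}$; and applying $\varphi_{r(g)}$ to $(\ast)$ and rewriting with Definition \ref{def22}(iv)--(v) together with the global cocycle identity Definition \ref{def21}(vi) for $u$ reduces $(\ast)$ to an identity already known for $\beta$. This last step is a routine substitution.

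\emph{Sufficiency.} Here I would build the enveloping action in two stages. Forgetting the twisting, $(\{D_g\},\{\alpha_g\})$ is an ordinary groupoid partial action with every $D_g$ unital, so by \cite[Theorem 2.1]{bagio} it globalizes: one realizes $E_e=\sum_{r(h)=e}\beta_h(\varphi_{d(h)}(D_{d(h)}))$ as a direct sum of translated copies of the domains indexed by the arrows $h$ into $e$, with $\beta$ acting by translation and $\varphi_e$ the inclusion of the $e$-summand, which already secures conditions (i)--(iv) of Definition \ref{def22}. I would then upgrade this to a twisted globalization by transporting $\tilde w$ to a global twisting $u_{g,h}\in\mathcal{M}(E_{r(g)})$: on the summand $I=\varphi_{r(g)}(D_{r(g)})$ it must act as multiplication by $\varphi_{r(g)}(\tilde w_{g,h})$ (this forces Definition \ref{def22}(v), so that restricting $u$ recovers $w$), and on each translated summand its value is prescribed by $\beta$-equivariance. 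One then adjusts the translations $\beta_g$ by the invertible multipliers built from $\tilde w$ so that axiom (iv) of Definition \ref{def21} holds with the prescribed $u_{g,h}$, and checks conditions (i)--(iii), (v) of Definition \ref{def21} and (i)--(v) of Definition \ref{def22} directly.

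The main obstacle is to show that $u_{g,h}$ is a well-defined invertible multiplier satisfying the global $2$-cocycle identity Definition \ref{def21}(vi). Well-definedness is delicate precisely because the translated summands $\beta_k(\varphi_{d(k)}(D_{d(k)}))$ overlap through intersections of ideals of ideals such as $D_gD_{gh}$, so the prescriptions for $u_{g,h}$ on different translates must agree on the overlaps; and, after applying the translations and embeddings, identity (vi) for $u$ unwinds to exactly $\alpha_g(\tilde w_{g,t}1_{g^{-1}})\tilde w_{g,ht}=1_g\tilde w_{g,h}\tilde w_{gh,t}$, but now required throughout $D_{r(g)}$ rather than only on the small ideal where the original $w$ lives. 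This is the one place where $(\ast)$ --- an identity in the full unital ring $D_{r(g)}$ --- is indispensable, and where the partial composability of $\G$ (only pairs in $\G^2$ compose) must be tracked at every step.
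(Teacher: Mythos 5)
Your necessity argument is fine and is in fact more careful than the paper's, which simply declares $\tilde w_{g,h}=u_{g,h}\cdot 1_g$ and moves on; restricting the multiplier $u_{g,h}$ to the unital ideal $\varphi_{r(g)}(D_{r(g)})$ and pulling back along $\varphi_{r(g)}$ is the right reading of that formula, and deducing $\tilde w_{g,h}1_g1_{gh}=w_{g,h}$ and $(\ast)$ from Definition \ref{def22}(iv)--(v) and the global cocycle for $u$ is indeed routine.

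The sufficiency direction, however, has a genuine gap at its very first step. The reduct $(\{D_g\}_{g\in\G},\{\alpha_g\}_{g\in\G})$ obtained by forgetting the twisting is \emph{not} an ordinary groupoid partial action: axiom (iv) of Definition \ref{def21} only gives $\alpha_g\circ\alpha_h(a)=w_{g,h}\alpha_{gh}(a)w_{g,h}^{-1}$, so the composition rule $\alpha_g\circ\alpha_h=\alpha_{gh}$ required by \cite[Theorem 2.1]{bagio} fails whenever conjugation by $w_{g,h}$ is nontrivial, i.e.\ for noncommutative $R$. Hence that theorem cannot be invoked to produce the underlying untwisted global action, and the subsequent step of ``adjusting the translations $\beta_g$ by the invertible multipliers built from $\tilde w$'' --- which is where the entire content of the construction lies --- is left unspecified: you give no formula for the adjusted $\beta_g$, no formula for $u_{g,h}$ off the distinguished summand, and no verification that the prescriptions agree on the overlaps you correctly flag as the danger point. (A smaller inaccuracy: $E_e=\sum_{r(h)=e}\beta_h(\varphi_{d(h)}(D_{d(h)}))$ is a sum of overlapping ideals, not a direct sum.) The paper avoids the two-stage decomposition altogether: it works in the function ring $\mathcal{F}(\G,R)$ and builds the twist into the translation from the outset, setting $\beta_g(f)|_h=\tilde w_{h^{-1},g}f(g^{-1}h)\tilde w_{h^{-1},g}^{-1}$ for $h$ with $r(h)=r(g)$ and $u_{g,h}|_t=\tilde w_{t^{-1},g}\tilde w_{t^{-1}g,h}\tilde w_{t^{-1},gh}^{-1}$, then verifies all the axioms directly; it is precisely there that $(\ast)$, holding on all of $D_{r(g)}$, yields the cocycle identity and the invertibility of $u_{g,h}$ in $E_g$. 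Your diagnosis of where $(\ast)$ must enter is correct, but the construction it is meant to support is missing.
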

\begin{proof}
If $\alpha$ admits globalization $\beta$, take $\widetilde{w}_{g,h} = u_{g,h} \cdot 1_g,$ for $(g,h) \in \G^{(2)}$.

For the converse, let $\mathcal{F} = \mathcal{F}(\G,R) = \{f: \G \rightarrow R : f \text{ is a map} \}$. We also use the notation $f(g) = f|_g$, for $f \in \mathcal{F}, g \in \G$. For each $g \in \G$, we set 
\begin{itemize} 
\item $X_g = \{h \in \G : r(h) = r(g)\}$; 

\item $F_g = \{f \in \mathcal{F} : f(h) = 0, \text{ for all } \, h \notin X_g\}$; and 

\item $Y_g = \{f \in F_g : f(h) \in D_{d(h)}\}$.
\end{itemize} 

Clearly, $Y_g$ is an ideal of $\mathcal{F}$ and $Y_g = Y_{r(g)}$, since $F_g = F_{r(g)}$.

For $g \in \G$ and $f \in Y_{g^{-1}}$, let $\beta_g: Y_{g^{-1}} \rightarrow Y_g$ be the map given by
\begin{align*}
    \beta_g(f)|_h = \begin{cases}
        \tilde{w}_{h^{-1},g}f(g^{-1}h)\tilde{w}_{h^{-1},g}^{-1}, \text{ if } h \in X_g, \\
        0, \text{ otherwise.} 
        \end{cases}   
\end{align*}

Notice that if $h \in X_g$, $r(h) = r(g) = d(g^{-1})$, which implies that the product $g^{-1}h$ exists and $r(g^{-1}h) = r(g^{-1})$. Thus $\beta_g$ is well defined.

\noindent \textbf{Assertion:} $\beta_g$ is a ring isomorphism.

Indeed, $\beta_g$ is clearly a ring homomorphism. Now for $g \in \G$ and $f \in Y_{g^{-1}}$, define:
\begin{align*}
\beta_g^{-1}(f)|_h = \begin{cases}
\tilde{w}_{h^{-1}g^{-1},g}^{-1}f(gh)\tilde{w}_{h^{-1}g^{-1},g}, \text{ if } h \in X_{g^{{-1}}}, \\
 0, \text{ otherwise.} 
\end{cases}    
\end{align*}

\noindent (i) For $h \in X_{g^{-1}}$,
\begin{align*}
    \beta_g^{-1} \circ \beta_g (f)|_h & = \beta_g^{-1}(\beta_g (f))|_h = \tilde{w}_{h^{-1}g^{-1},g}^{-1}\beta_g(f)(gh)\tilde{w}_{h^{-1}g^{-1},g}\\ 
    & = \tilde{w}_{h^{-1}g^{-1},g}^{-1}\tilde{w}_{h^{-1}g^{-1},g}f(g^{-1}(gh))\tilde{w}_{h^{-1}g^{-1},g}^{-1}\tilde{w}_{h^{-1}g^{-1},g} \\ 
    & =  f((g^{-1}g)h) = f(d(g)h) = f(r(h)h) = f(h).
\end{align*}

\noindent (ii) For $h \in X_{g}$, 
\begin{align*}
    \beta_g \circ \beta_g^{-1} (f)|_h & = \beta_g(\beta_g^{-1} (f))|_h = \tilde{w}_{h^{-1},g}\beta_g^{-1}(f)(g^{-1}h)\tilde{w}_{h^{-1},g}^{-1}\\ 
    &= \tilde{w}_{h^{-1},g}\tilde{w}_{h^{-1}gg^{-1},g}^{-1}f(g(g^{-1}h))\tilde{w}_{h^{-1}gg^{-1},g}\tilde{w}_{h^{-1},g}^{-1} \\
    &=  f((gg^{-1})h) = f(r(g)h) = f(r(h)h) = f(h).
\end{align*}

Then $\beta_g$ is a ring isomorphism.

For $(g,h) \in \G^{(2)}$, define $u_{g,h} \in \mathcal{U}(Y_g)$ by
\begin{align*}
    u_{g,h}|_k = \begin{cases}
    \tilde{w}_{k^{-1},g}\tilde{w}_{k^{-1}g,h}\tilde{w}_{k^{-1},gh}^{-1}, \text{ if } k \in X_g, \\
        0, \text{ otherwise.} 
    \end{cases}   
\end{align*}
 
Observe that 
\begin{align*}
    u_{g,h}^{-1}|_k = \begin{cases}
        \tilde{w}_{k^{-1},gh}\tilde{w}_{k^{-1}g,h}^{-1}\tilde{w}_{k^{-1},g}^{-1}, \text{ if } k \in X_g \\
        0, \text{ otherwise.} 
    \end{cases}   
\end{align*}

We shall prove that $\beta = (Y_g, \beta_g, u_{g,h})_{(g,h)\in \G^{(2)}}$ is a groupoid twisted global action of $\G$ over $\mathcal{F}$. Since $Y_g = Y_{r(g)}$, it is enough to show that $\beta$ satisfies all the conditions of Definition \ref{def21}.

Let's start by showing the condition (vi) of Definition \ref{def21}. It is sufficient to verify that $\beta_g(u_{h,k})u_{g,hk} = u_{g,h}u_{gh,k}$, for $(g,h,k) \in \G^{(3)}$. For $x \in \G$, we have that $\beta_g(u_{h,k})u_{g,hk}|_x = \beta_g(u_{h,k})|_xu_{g,hk}|_x$ and $u_{g,h}u_{gh,k}|_x = u_{g,h}|_xu_{gh,k}|_x$. Then
\begin{align*}
 \beta_g(u_{h,k})|_xu_{g,hk}|_x
    & = \begin{cases}
        [\tilde{w}_{x^{-1},g}u_{h,k}(g^{-1}x)\tilde{w}_{x^{-1},g}^{-1}][\tilde{w}_{x^{-1},g}\tilde{w}_{x^{-1}g,hk}\tilde{w}_{x^{-1},ghk}^{-1}], \text{ if } x \in X_g, \\
        0, \text{ otherwise}
    \end{cases} \\
    & = \begin{cases}
        [\tilde{w}_{x^{-1},g}(\tilde{w}_{x^{-1}g,h}\tilde{w}_{x^{-1}gh,k}\tilde{w}_{x^{-1}g,hk}^{-1})\tilde{w}_{x^{-1},g}^{-1}] \\ 
        [\tilde{w}_{x^{-1},g}\tilde{w}_{x^{-1}g,hk}\tilde{w}_{x^{-1},ghk}^{-1}], \text{ if } x \in X_g, \\ 
        0, \text{ otherwise}
    \end{cases} \\
    & = \begin{cases}
        \tilde{w}_{x^{-1},g}\tilde{w}_{x^{-1}g,h}\tilde{w}_{x^{-1}gh,k}\tilde{w}_{x^{-1},ghk}^{-1}, \text{ if } x \in X_g, \\ 
        0, \text{ otherwise}
    \end{cases}
\end{align*}
and
\begin{align*}
    u_{g,h}|_xu_{gh,k}|_x     & = \begin{cases}
        [\tilde{w}_{x^{-1},g}\tilde{w}_{x^{-1}g,h}\tilde{w}_{x^{-1},gh}^{-1}][\tilde{w}_{x^{-1},gh}\tilde{w}_{x^{-1}gh,k}\tilde{w}_{x^{-1},ghk}^{-1}], \text{ if } x \in X_{gh} = X_g, \\ 
        0, \quad \text{ otherwise}
    \end{cases} \\
    & = \begin{cases}
        \tilde{w}_{x^{-1},g}\tilde{w}_{x^{-1}g,h}\tilde{w}_{x^{-1}gh,k}\tilde{w}_{x^{-1},ghk}^{-1}, \text{ if } x \in X_g, \\ 
        0, \text{ otherwise,}
    \end{cases}
\end{align*}
which completes the condition (vi).

Next we compute (iv) of Definition \ref{def21}. For any $f \in F_{h^{-1}}$, $(g,h) \in \G^{(2)}$ and $x \in \G$, we have
\begin{align*}
    (\beta_g \circ \beta_h(f))|_x & = \begin{cases}
        \tilde{w}_{x^{-1},g}\beta_h(f)(g^{-1}x)\tilde{w}_{x^{-1},g}^{-1}, \text{ if } x \in X_g, \\ 
        0, \text{ otherwise}
    \end{cases} \\
    & = \begin{cases}
        \tilde{w}_{x^{-1},g}\tilde{w}_{x^{-1}g,h}f(h^{-1}g^{-1}x)\tilde{w}_{x^{-1}g,h}^{-1}\tilde{w}_{x^{-1},g}^{-1}, \text{ if } x \in X_g, \\ 
        0, \text{ otherwise.}
    \end{cases}
\end{align*}

Furthermore,
\begin{align*}
    (u_{g,h}\beta_{gh}(f)u_{g,h}^{-1})|_x & = \begin{cases}
        [\tilde{w}_{x^{-1},g}\tilde{w}_{x^{-1}g,h}\tilde{w}_{x^{-1},gh}^{-1}][\tilde{w}_{x^{-1},gh}f(h^{-1}g^{-1}x)\tilde{w}_{x^{-1},gh}^{-1}] \\ 
        [\tilde{w}_{x^{-1},gh}\tilde{w}_{x^{-1}g,h}^{-1}\tilde{w}_{x^{-1},g}^{-1}], \text{ if } x \in X_g, \\ 
        0, \text{ otherwise}
    \end{cases} \\
    & = \begin{cases}
        \tilde{w}_{x^{-1},g}\tilde{w}_{x^{-1}g,h}f(h^{-1}g^{-1}x)\tilde{w}_{x^{-1}g,h}^{-1}\tilde{w}_{x^{-1},g}^{-1}, \text{ if } x \in X_g, \\ 
        0, \text{ otherwise}.
    \end{cases}
\end{align*}

Consequently, $\beta_g \circ \beta_h(f) = u_{g,h}\beta_{gh}(f)u_{g,h}^{-1}$ and the equality (iv) holds.

Since $w_{r(g),g} = w_{g,d(g)} = 1_g$ and, by the hypothesis, $\tilde{w}_{g,h}1_g1_{gh} = w_{g,h}$, it follows that $\tilde{w}_{r(g),g} = \tilde{w}_{g,d(g)} = 1_{r(g)}$.  

To see the condition (ii) of the Definition \ref{def21}, for $g \in \G$ and $f \in Y_{g^{-1}}$, take 
\begin{align*}
    \beta_{d(g)}(f)|_h & = \begin{cases}
        \tilde{w}_{h^{-1},d(g)}f(d(g)h)\tilde{w}_{h^{-1},d(g)}^{-1}, \text{ if } h \in X_{d(g)},\\ 
        0, \text{ otherwise}
    \end{cases} \\
    & = \begin{cases}
        \tilde{w}_{h^{-1},r(h)}f(r(h)h)\tilde{w}_{h^{-1},r(h)}^{-1}, \text{ if } h \in X_{d(g)}, \\ 
        0, \text{ otherwise}
    \end{cases} \\
    & = \begin{cases}
        \tilde{w}_{h^{-1},d(h^{-1})}f(h)\tilde{w}_{h^{-1},d(h^{-1})}^{-1}, \text{ if } h \in X_{d(g)}, \\ 
        0, \quad \text{otherwise}
    \end{cases} \\
    & = \begin{cases}
        1_{r(h^{-1})}f(h)1_{r(h^{-1})}, \text{ if } h \in X_{d(g)}, \\ 
        0, \text{ otherwise}
    \end{cases} \\
    & = \begin{cases}
        1_{d(h)}f(h)1_{d(h)}, \text{ if } h \in X_{d(g)}, \\ 
        0, \text{ otherwise}
    \end{cases} \\
    & = \begin{cases}
        f(h), \text{ if } h \in X_{d(g)}, \\
        0, \text{ otherwise}
    \end{cases} \\
    & = f(h).
\end{align*}

Once $f \in Y_{g^{-1}}$, it implies that $f \in F_{g^{-1}}$, that is, $f(h) = 0$ for all $h \notin X_{d(g)}$, and that is why the last equality holds. Then $\beta_{d(g)}$ is the identity map of $Y_{g^{-1}}$ and the condition (ii) is satisfied.

The conditions (i) and (iii) of Definition \ref{def21} are straightforward. It remains to ver\text{if}y the condition (v) of the same definition.

Notice that, for $x \in \G$,
\begin{align*}
    1_{Y_g}|_x = \begin{cases}
        1_{d(x)}, \text{ if } x \in X_g, \\ 
        0, \text{ otherwise}.
    \end{cases}
\end{align*}

So
\begin{align*}
    u_{r(g),g}|_x & = \begin{cases}
    \tilde{w}_{x^{-1},r(g)}\tilde{w}_{x^{-1}r(g),g}\tilde{w}_{x^{-1},r(g)g}^{-1}, \text{ if } x \in X_g, \\
    0, \text{ otherwise}
    \end{cases} \\
    & = \begin{cases}
    \tilde{w}_{x^{-1},r(x)}\tilde{w}_{x^{-1}r(x),g}\tilde{w}_{x^{-1},g}^{-1}, \text{ if } x \in X_g, \\
    0, \text{ otherwise}
    \end{cases} \\
    & = \begin{cases}
    \tilde{w}_{x^{-1},r(x)}\tilde{w}_{x^{-1}d(x^{-1}),g}\tilde{w}_{x^{-1},g}^{-1}, \text{ if } x \in X_g, \\
    0, \text{ otherwise}
    \end{cases} \\
    & = \begin{cases}
    \tilde{w}_{x^{-1},r(x)}\tilde{w}_{x^{-1},g}\tilde{w}_{x^{-1},g}^{-1}, \text{ if } x \in X_g, \\
    0, \text{ otherwise}
    \end{cases}
    \end{align*}

    \begin{align*}
    & = \begin{cases}
    \tilde{w}_{x^{-1},r(x)}, \text{ if } x \in X_g, \\ 
    0, \text{ otherwise}
    \end{cases} \\
    & = \begin{cases}
    \tilde{w}_{x^{-1},d(x^{-1})}, \text{ if } x \in X_g, \\
    0, \text{ otherwise}
    \end{cases} \\
    & = \begin{cases}
    1_{r(x^{-1})}, \text{ if } x \in X_g, \\
    0, \text{ otherwise}
    \end{cases} \\
    & = \begin{cases}
    1_{d(x)}, \text{ if } x \in X_g, \\ 
    0, \text{ otherwise}
    \end{cases} \\
    & = 1_{Y_g}|_x.
\end{align*}

On the other hand, 

\begin{align*}
    u_{g,d(g)}|_x & = \begin{cases}
    \tilde{w}_{x^{-1},g}\tilde{w}_{x^{-1}g,d(g)}\tilde{w}_{x^{-1},gd(g)}^{-1}, \text{ if } x \in X_g, \\
    0, \text{ otherwise}
    \end{cases} \\
    & = \begin{cases}
    \tilde{w}_{x^{-1},g}\tilde{w}_{x^{-1}g,d(x^{-1}g)}\tilde{w}_{x^{-1},g}^{-1}, \text{ if } x \in X_g, \\
    0, \text{ otherwise}
    \end{cases} \\
    & = \begin{cases}
    \tilde{w}_{x^{-1},g}1_{r(x^{-1}g)}\tilde{w}_{x^{-1},g}^{-1}, \text{ if } x \in X_g, \\ 
    0, \text{ otherwise}
    \end{cases} \\
    & = \begin{cases}
    \tilde{w}_{x^{-1},g}1_{r(x^{-1})}\tilde{w}_{x^{-1},g}^{-1}, \text{ if } x \in X_g, \\
    0, \text{ otherwise}
    \end{cases} \\
    & = \begin{cases}
    \tilde{w}_{x^{-1},g}\tilde{w}_{x^{-1},g}^{-1}, \text{ if } x \in X_g, \\ 
    0, \text{ otherwise}
    \end{cases} \\
    & = \begin{cases}
    1_{r(x^{-1})}, \text{ if } x \in X_g, \\ 
    0, \text{ otherwise}
    \end{cases} \\
    & = \begin{cases}
    1_{d(x)}, \text{ if } x \in X_g, \\ 
    0, \text{ otherwise}
    \end{cases} \\
    & = 1_{Y_g}|_x.
\end{align*}

Therefore $\beta$ is a groupoid twisted global action of $\G$ over $\mathcal{F} \cong \prod_{g \in \G} R_g$, where $R_g = R,$ $\forall g \in \G$.

Now, for each $e \in \G_0$, define $\varphi_e: D_e \rightarrow Y_e$ by 
\begin{align*}
    \varphi_e(a)|_h = \begin{cases}
    \alpha_{h^{-1}}(a1_h), \text{ if } h \in X_e, \\ 
    0, \text{ otherwise,}
    \end{cases}
\end{align*}
for all $a \in D_e$ and $h \in \G$. By the definition, it follows that $\varphi_e(a)|_e = a$. Thus, $\varphi_e$ is a monomorphism of rings, for all $e \in \G_0$.

Let $E_g$ be the subring of $Y_g$ generated by $\cup_{r(h) = r(g)}\beta_h(\varphi_{d(h)}(D_{d(h)})),$ for all $g \in \G$. Notice that $\varphi_{d(g)}(D_{d(g)}) \subseteq E_{d(g)}$. Let $T = \prod_{e \in \G_0} E_e$ and for each $e \in \G_0$, let $i_e: E_e \rightarrow T$ be the injective map given by $i_e(x) = (x_l)_{l \in \G_0}$, with $x_e = x$ and $x_l = 0$ for all $l \neq e$. We will identify $i_e(E_e)$ with $E_e$. We shall prove that the restriction of $\beta$ to $T$ is a globalization for $\alpha$. We will denote this restriction by the same symbol $\beta$.

To begin with, we will show condition (iv) of Definition \ref{def22}. For $g,h \in \G$ and $a \in D_{g^{-1}}$,
\begin{align*}
    \beta_g(\varphi_{d(g)}(a))|_h & = \begin{cases}
    \tilde{w}_{h^{-1},g}\varphi_{d(g)}(a)(g^{-1}h)\tilde{w}_{h^{-1},g}^{-1}, \text{ if } h \in X_g, \\
    0, \text{ otherwise}
    \end{cases} \\
    & = \begin{cases}
    \tilde{w}_{h^{-1},g}\alpha_{h^{-1}g}(a1_{g^{-1}h})\tilde{w}_{h^{-1},g}^{-1}, \text{ if } x \in X_g, \\
    0, \text{ otherwise}
    \end{cases} \\
    & = \begin{cases}
    w_{h^{-1},g}\alpha_{h^{-1}g}(a1_{g^{-1}h})w_{h^{-1},g}^{-1}, \text{ if } x \in X_g, \\ 
    0, \text{ otherwise.}
    \end{cases}
\end{align*}

On the other hand,
\begin{align*}
    \varphi_{r(g)}(\alpha_g(a))|_h & = \begin{cases} 
    \alpha_{h^{-1}}(\alpha_g(a)1_h), \text{ if } h \in X_g, \\
    0, \text{ otherwise}
    \end{cases} \\
    & = \begin{cases} 
    \alpha_{h^{-1}}(\alpha_g(a)1_g1_h), \text{ if } x \in X_g, \\ 
    0, \text{ otherwise}
    \end{cases} \\
    & = \begin{cases} 
    \alpha_{h^{-1}}(\alpha_g(a1_{g^{-1}}1_{g^{-1}h})), \text{ if } x \in X_g, \\ 
    0, \text{ otherwise.}
    \end{cases}
\end{align*}

By the condition (iv) of the Definition \ref{def21} for $\alpha$, it follows that $$\beta_g(\varphi_{d(g)}(a)) = \varphi_{r(g)}(\alpha_g(a)),$$that is, (iv) of the Definition \ref{def22} holds.

Next step is to prove (iii) of the Definition \ref{def22}, that means, for all $g \in \G$, $$\varphi_{r(g)}(D_g) = \varphi_{r(g)}(D_{r(g)}) \cap \beta_g(\varphi_{d(g)}(D_{d(g)})).$$ 

An element on the right hand side can be written as $\varphi_{r(g)}(a_{r(g)}) = \beta_g(\varphi_{d(g)}(b_{d(g)}),$ for some $a_{r(g)} \in D_{r(g)}$ and $b_{d(g)} \in D_{d(g)}$. Then, for each $h \in \G$, $\varphi_{r(g)}(a_{r(g)})|_h = \beta_g(\varphi_{d(g)}(b_{d(g)}))|_h$ implies that
\begin{align*}
    & \begin{cases}
     \alpha_{h^{-1}}(\alpha_g(a_{r(g)})1_h), \text{ if } h \in X_g, \\ 
    0, \text{ otherwise}
    \end{cases} \\
    = & \begin{cases}
    \tilde{w}_{h^{-1},g}\varphi_{d(g)}(b_{d(g)})\tilde{w}_{h^{-1},g}^{-1}, \text{ if } h \in X_g, \\
    0, \text{ otherwise}
    \end{cases} \\
    = & \begin{cases}
    \tilde{w}_{h^{-1},g}\alpha_{h^{-1}g}(b_{d(g)}1_{g^{-1}h})\tilde{w}_{h^{-1},g}^{-1}, \text{ if } h \in X_g, \\
    0, \text{ otherwise.}
    \end{cases}
\end{align*}

Thus $\alpha_{h^{-1}}(\alpha_g(a_{r(g)})1_h) = \tilde{w}_{h^{-1},g}\alpha_{h^{-1}g}(b_{d(g)}1_{g^{-1}h})\tilde{w}_{h^{-1},g}^{-1}$ if $h \in X_g$. Take $h = r(g)$. Therefore
$$\alpha_{r(g)}(\alpha_g(a_{r(g)})1_{r(g)}) = \tilde{w}_{r(g),g}\alpha_{r(g)g}(b_{d(g)}1_{g^{-1}r(g)})\tilde{w}_{r(g),g}^{-1},$$ which implies $a_{r(g)} = \alpha_{g}(b_{d(g)}1_{g^{-1}}) \in D_g$.

So $\varphi_{r(g)}(a_{r(g)}) \in \varphi_{r(g)}(D_g)$ and then $\varphi_{r(g)}(D_{r(g)}) \cap \beta_g(\varphi_{d(g)}(D_{d(g)})) \subseteq \varphi_{r(g)}(D_g).$

For the reverse inclusion, given an arbitrary $a_g \in D_g$, it follows that

\begin{align*}
    \beta_g(\varphi_{d(g)}(\alpha_g^{-1}(a_g)))|_h & = \begin{cases}
    \tilde{w}_{h^{-1},g}\varphi_{d(g)}(\alpha_g^{-1}(a_g))(g^{-1}h)\tilde{w}_{h^{-1},g}^{-1}, \text{ if } h \in X_g, \\
    0, \text{ otherwise}
    \end{cases}
    \end{align*}

    \begin{align*}
    & = \begin{cases}
    \tilde{w}_{h^{-1},g}\alpha_{h^{-1}g}(\alpha_g^{-1}(a_g)1_{g^{-1}h})\tilde{w}_{h^{-1},g}^{-1}, \text{ if } h \in X_g, \\ 
    0, \text{ otherwise}
    \end{cases} \\
    & = \begin{cases}
    \tilde{w}_{h^{-1},g}\alpha_{h^{-1}g}(\alpha_g^{-1}(a_g)1_{g^{-1}h}) \\
    \alpha_{h^{-1}g}(1_{g^{-1}}1_{g^{-1}h})\tilde{w}_{h^{-1},g}^{-1}, \text{ if } h \in X_g, \\
    0, \text{ otherwise}
    \end{cases} \\
    & = \begin{cases}
    \tilde{w}_{h^{-1},g}\alpha_{h^{-1}g}(\alpha_g^{-1}(a_g)1_{g^{-1}h}) \\ 
    1_{h^{-1}g}1_{h^{-1}}\tilde{w}_{h^{-1},g}^{-1}, \text{ if } h \in X_g, \\
    0, \text{ otherwise,}
    \end{cases}
\end{align*}
where the last equality follows by (iii) of the Definition \ref{def21}. Moreover, using the hyphotesis $\tilde{w}_{g,h}1_g1_{gh} = w_{g,h}$ and (iv) from the Definition \ref{def21}, we have that 
\begin{align*}
    \beta_g(\varphi_{d(g)}(\alpha_g^{-1}(a_g)))|_h & = \begin{cases}
    w_{h^{-1},g}\alpha_{h^{-1}g}(\alpha_g^{-1}(a_g)1_{g^{-1}h})w_{h^{-1},g}^{-1}, \text{ if } h \in X_g, \\ 
    0, \text{ otherwise}
    \end{cases} \\
    & = \begin{cases}
    \alpha_{h^{-1}} \circ \alpha_g(\alpha_g^{-1}(a_g)1_{g^{-1}}1_{g^{-1}h}), \text{ if } h \in X_g, \\
    0, \text{ otherwise}
    \end{cases} \\
    & = \begin{cases}
    \alpha_{h^{-1}}(a_g\alpha_g(1_{g^{-1}}1_{g^{-1}h})), \text{ if } h \in X_g, \\ 
    0, \text{ otherwise}
    \end{cases} \\
    & = \begin{cases}
    \alpha_{h^{-1}}(a_g1_{g}1_{gh})), \text{ if } h \in X_g, \\ 
    0, \text{ otherwise}
    \end{cases} \\
    & = \begin{cases}
    \alpha_{h^{-1}}(a_g1_{h})), \text{ if } h \in X_g, \\ 
    0, \text{ otherwise}
    \end{cases} \\
    & = \varphi_{r(g)}(a_g)|_h.
\end{align*}

Then $\varphi_{r(g)}(D_g) \subseteq \varphi_{r(g)}(D_{r(g)}) \cap \beta_g(\varphi_{d(g)}(D_{d(g)}))$ and (ii) of the Definition \ref{def22} is valid.

Next we check (i) of the Definition \ref{def21}. Since $E_g (= E_{r(g)})$ is the subring of $Y_g$ generated by $\cup_{r(h) = r(g)}\beta_h(\varphi_{d(h)}(D_{d(h)})),$ to see that $\varphi_e(D_e)$ is an ideal of $E_e$ for each $e \in \G_0$ is enough to show that $\beta_h(\varphi_{d(h)}(a_{d(h)})) \cdot \varphi_{r(g)}(b_{r(g)})$, $\varphi_{r(g)}(b_{r(g)}) \cdot  \beta_h(\varphi_{d(h)}(a_{d(h)})) \in \varphi_{r(g)}(D_{r(g)})$, for all $g \in \G$, $h \in X_g$, $a_{d(h)} \in D_{d(h)}$ and $b_{r(g)} \in D_{r(g)}$.

For $k \in \G$, using a similar argument to what was done to show (iii) of the Definition \ref{def22}, we have

\begin{align*}
    \beta_h(\varphi_{d(h)}(a_{d(h)}))|_k \cdot \varphi_{r(g)}(b_{r(g)})|_k & = \begin{cases}
    \tilde{w}_{k^{-1},h}\varphi_{d(h)}(a_{d(h)})(h^{-1}k)\tilde{w}_{h^{-1},g}^{-1} \cdot \alpha_{k^{-1}}(b_{r(g)}1_k), \text{ if } k \in X_g, \\ 
    0, \text{ otherwise}
    \end{cases} \\
    & = \begin{cases}
    \tilde{w}_{k^{-1},h}\alpha_{k^{-1}h}(a_{d(h)}1_{h^{-1}k})\tilde{w}_{h^{-1},g}^{-1}\cdot \alpha_{k^{-1}}(b_{r(g)}1_k), \text{ if } k \in X_g, \\ 
    0, \text{ otherwise}
    \end{cases} \\
    & = \begin{cases}
    \alpha_{k^{-1}} \circ \alpha_{h}(a_{d(h)}1_{h^{-1}}1_{h^{-1}k}) \cdot \alpha_{k^{-1}}(b_{r(g)}1_k), \text{ if } k \in X_g \\ 
    0, \text{ otherwise}
    \end{cases} \\
    & = \begin{cases}
    \alpha_{k^{-1}}(\alpha_{h}(a_{d(h)}1_{h^{-1}})b_{r(g)}1_k), \text{ if } k \in X_g, \\ 
    0, \text{ otherwise}
    \end{cases} \\
    & = \varphi_{r(g)}(\alpha_h(a_{d(h)}1_{h^{-1}})b_{r(g)})|_k.
\end{align*}

This yields that $\beta_h(\varphi_{d(h)}(a_{d(h)})) \cdot \varphi_{r(g)}(b_{r(g)}) \in \varphi_{r(g)}(D_{r(g)})$. Similarly, $$\varphi_{r(g)}(b_{r(g)}) \cdot \beta_h(\varphi_{d(h)}(a_{d(h)})) \in \varphi_{r(g)}(D_{r(g)}).$$

We show next that $u$ satisfies (v) of the Definition \ref{def22}. By the conditions (vi) and (iii) of the Definition \ref{def21}, we have
\begin{align*}
    \varphi_{r(g)}(w_{g,h})|_k & = \begin{cases}
    \alpha_{k^{-1}}(w_{g,h}1_k), \text{ if } k \in X_g, \\ 
    0, \text{ otherwise}
    \end{cases} \\
    & = \begin{cases}
    w_{k^{-1},g}w_{k^{-1}g,h}w_{k^{-1},gh}^{-1}, \text{ if } k \in X_g, \\
    0, \text{ otherwise}
    \end{cases} \\
    & = \begin{cases}
    1_{k^{-1}}1_{k^{-1}g}1_{k^{-1}gh}\tilde{w}_{k^{-1},g}\tilde{w}_{k^{-1}g,h}\tilde{w}_{k^{-1},gh}^{-1}, \text{ if } k \in X_g, \\
    0, \text{ otherwise}
    \end{cases} \\
    & = \begin{cases}
    \alpha_{k^{-1}}(1_{k}1_g1_{gh})\tilde{w}_{k^{-1},g}\tilde{w}_{k^{-1}g,h}\tilde{w}_{k^{-1},gh}^{-1}, \text{ if } k \in X_g, \\
    0, \text{ otherwise}
    \end{cases} \\
    & = \varphi_{r(g)}(1_g1_{gh})|_k\cdot u_{g,h}|_k = (\varphi_{r(g)}(1_g1_{gh}) \cdot u_{g,h})|_k.
\end{align*}

Thus $\varphi_{r(g)}(w_{g,h}) = \varphi_{r(g)}(1_g1_{gh}) \cdot u_{g,h}$. For $a \in D_gD_{gh}$,
\begin{align*}
    \varphi_{r(g)}(aw_{g,h}) & = \varphi_{r(g)}(a)\varphi_{r(g)}(w_{g,h}) = \varphi_{r(g)}(a)\varphi_{r(g)}(1_g1_{gh})u_{g,h} = \varphi_{r(g)}(a1_g1_{gh})u_{g,h} = \varphi_{r(g)}(a)u_{g,h}.
\end{align*}

By a similar reasoning, we have that $\varphi_{r(g)}(w_{g,h}a) = u_{g,h}\varphi_{r(g)}(a)$.

Observe that $E_g = \sum_{r(h) = r(g)} \beta_h(\varphi_{d(h)}(D_{d(h)}))$ for all $g \in \G$. This is a consequence of the fact that each $\varphi_{d(h)}(D_{d(h)})$ is an ideal of $E_{d(h)}$, as it was showed previously.

It remains to prove that each $u_{g,h} \in \mathcal{U}(E_g)$. Recall that, a priori, $u_{g,h} \in \mathcal{U}(Y_g)$. Let's check that $u_{g,h}E_g = E_g = E_gu_{g,h}$.

\noindent \textbf{Step 1:} $u_{g,h}\varphi_{r(g)}(a_g) = \varphi_{r(g)}(\tilde{w}_{g,h}a_g),$ for all $(g,h) \in \G^{(2)}$ and $a_g \in E_g$.

In fact, using the hypothesis $(\ast)$, for all $t \in \G$, we have that 
\begin{align*}
    u_{g,h}\varphi_{r(g)}(a_g)|_k & = \begin{cases}
    \tilde{w}_{k^{-1},g}\tilde{w}_{k^{-1}g,h}\tilde{w}_{k^{-1},gh}^{-1}\alpha_{k^{-1}}(a_g1_k), \text{ if } k \in X_g, \\
    0, \text{ otherwise}
    \end{cases} \\
    & = \begin{cases}
    \alpha_{k^{-1}}(\tilde{w}_{g,h}1_k)\alpha_{k^{-1}}(a_g1_k), \text{ if } k \in X_g, \\ 
    0, \text{ otherwise}
    \end{cases} \\
    & = \begin{cases}
    \alpha_{k^{-1}}(\tilde{w}_{g,h}a_g1_k) \text{ if } k \in X_g, \\ 
    0, \text{ otherwise}
    \end{cases} \\
    & = \varphi_{r(g)}(\tilde{w}_{g,h}a_g)|_k.
\end{align*}

By Step 1, it follows that $u_{g,h}\varphi_{r(g)}(a_g) \in \varphi_{r(g)}(D_{r(g)})$. It is analogously seen that $u_{g,h}^{-1}\varphi_{r(g)}(a_g) = \varphi_{r(g)}(\tilde{w}_{g,h}^{-1}a_g)$, so that $u_{g,h}^{-1}\varphi_{r(g)}(a_g) \in \varphi_{r(g)}(D_{r(g)})$.

\noindent \textbf{Step 2:} $\beta_k^{-1}(f) = u_{k^{-1}, k}^{-1}\beta_{k^{-1}}(f)u_{k^{-1},k}$, for all $k \in \G$ and $f \in Y_k$.

Take $h \in \G$.
\small{\begin{align*}
    u_{k^{-1}, k}^{-1}\beta_{k^{-1}}(f)u_{k^{-1},k}|_h & = \begin{cases}
    \tilde{w}_{h^{-1},k^{-1}k}\tilde{w}_{h^{-1}k^{-1},k}^{-1}\tilde{w}_{h^{-1},k^{-1}}^{-1}\tilde{w}_{h^{-1},k^{-1}}f(kh) \\
    \cdot \; \tilde{w}_{h^{-1},k^{-1}}^{-1}\tilde{w}_{h^{-1},k^{-1}}\tilde{w}_{h^{-1}k^{-1},k}\tilde{w}_{h^{-1},k^{-1}k}^{-1}, \text{ if } h \in X_k, \\
    0, \text{ otherwise}
    \end{cases} \\
    & = \begin{cases}
    \tilde{w}_{h^{-1}k^{-1},k}^{-1}f(kh)\tilde{w}_{h^{-1}k^{-1},k}, \text{ if } h \in X_k, \\ 
    0, \text{ otherwise}
    \end{cases} \\
    & = \beta_k^{-1}(f)|_h.
\end{align*}}

\noindent \textbf{Step 3:} $\beta_g(u_{h,k})u_{g,hk} = u_{g,h}u_{gh,k}$.

By the hypothesis $(\ast)$, for $x \in \G$,
\begin{align*}
    & (\beta_g(u_{h,k})u_{g,hk})|_x \\
    & = \begin{cases}
    [\tilde{w}_{x^{-1},g}u_{h,k}(g^{-1}x)\tilde{w}_{x^{-1},g}^{-1}][\tilde{w}_{x^{-1},g}\tilde{w}_{x^{-1}g,hk}\tilde{w}_{x^{-1},ghk}^{-1}], \text{ if } x \in X_g, \\ 
    0, \text{ otherwise}
    \end{cases} \\
    & = \begin{cases}
    \tilde{w}_{x^{-1},g}\tilde{w}_{x^{-1}g,h}\tilde{w}_{x^{-1}gh,k}\tilde{w}_{x^{-1}g,hk}^{-1}\tilde{w}_{x^{-1},g}^{-1}\tilde{w}_{x^{-1},g}\tilde{w}_{x^{-1}g,hk}\tilde{w}_{x^{-1},ghk}^{-1}, \text{ if } x \in X_g, \\
    0, \text{ otherwise}
    \end{cases} \\
    & = \begin{cases}
    \tilde{w}_{x^{-1},g}\tilde{w}_{x^{-1}g,h}\tilde{w}_{x^{-1}gh,k}\tilde{w}_{x^{-1},ghk}^{-1}, \text{ if } x \in X_g, \\ 
    0, \text{ otherwise}
    \end{cases} \\
    & = \begin{cases}
    [\tilde{w}_{x^{-1},g}\tilde{w}_{x^{-1}g,h}\tilde{w}_{x^{-1},gh}^{-1}][\tilde{w}_{x^{-1},gh}\tilde{w}_{x^{-1}gh,k}\tilde{w}_{x^{-1},ghk}^{-1}], \text{ if } x \in X_g, \\ 
    0, \text{ otherwise}
    \end{cases} \\
    & = (u_{g,h}u_{gh,k})|_x.
\end{align*}

Using Steps 1, 2 and 3, we have that, for all $a_g \in D_{r(g)}$,
\begin{align*}
    \beta_k^{-1}(u_{g,h})\varphi_{r(g)}(a_g) & = u_{k^{-1},k}^{-1}\beta_{k^{-1}}(u_{g,h})u_{k^{-1},k}\varphi_{r(g)}(a_g) \\
    & = u_{k^{-1},k}^{-1}u_{k^{-1},g}u_{k^{-1}g,h}u_{k^{-1},gh}^{-1}u_{k^{-1},k}\varphi_{r(g)}(a_g) \in \varphi_{r(g)}(D_{r(g)}).
\end{align*}

Therefore, $\beta_k^{-1}(u_{g,h})\varphi_{r(g)}(D_{r(g)}) \subseteq \varphi_{r(g)}(D_{r(g)})$. Then $u_{g,h}\beta_k(\varphi_{r(g)}(D_{r(g)})) \subseteq \beta_k(\varphi_{r(g)}(D_{r(g)})),$ concluding that $u_{g,h}E_g \subseteq E_g$.

Similarly, $u_{g,h}^{-1}E_g \subseteq E_g$. An analogous argument gives $E_gu_{g,h}, E_gu_{g,h}^{-1} \subseteq E_g$, which shows that $u_{g,h}E_g = E_g = E_gu_{g,h}$.
\end{proof}

\begin{exe}
Consider $R$, $\G$ and $\alpha$ as in Example \ref{ex1}. We have that $\alpha$ is a globalizable groupoid twisted partial action by taking
\begin{align*}
    \tilde{w}_{r(g),r(g)} = \tilde{w}_{r(g),g} & = \tilde{w}_{g,d(g)}  = 1_{r(g)} = e_1 + e_2 \\
    \tilde{w}_{d(g),d(g)} = \tilde{w}_{d(g),g^{-1}} & = \tilde{w}_{g^{-1},r(g)} = 1_{d(g)} = e_3 + e_4 \\
    \tilde{w}_{g,g^{-1}}  = -1_{r(g)} =  -(e_1 &+ e_2), \; \tilde{w}_{g^{-1},g} = -1_{d(g)} = -(e_3 + e_4)
\end{align*}
and applying Theorem \ref{teoglobal}.
\end{exe}

\section{Twisted Crossed Products and Morita Equivalence}

In this section, given a twisted partial action of a groupoid, we introduce the associated twisted crossed product and show that the crossed products of the partial action and of its globalization are Morita equivalent.

\begin{defi}
Let $\alpha = (\{D_g\}_{g \in \mathcal{G}}, \{\alpha_g\}_{g \in \mathcal{G}}, \{w_{g,h}\}_{(g,h) \in \mathcal{G}^{(2)}})$ be a groupoid twisted partial action of $\G$ on a ring $R$. We define the \emph{twisted crossed product} $R *_{\alpha,w} G$ as
\begin{align*}
    R *_{\alpha,w} \G = \left \{ \sum^{\text{finite}}_{g \in \G} a_g \delta_g : a_g \in D_g \right \} = \bigoplus_{g \in \G} D_g \delta_g,
\end{align*}
where the $\delta_g$'s are symbols. The addition is the usual and the product is given by
\begin{align*}
    (a_g \delta_g)(b_h \delta_h) = \begin{cases} 
    \alpha_g(\alpha_{g}^{-1}(a_g)b_h)w_{g,h} \delta_{gh}, \text{ if } (g,h) \in \G^{(2)}, \\
    0, \text{ otherwise.}
    \end{cases}
\end{align*}
\end{defi}

\begin{theorem} \label{teoassoc}
    Let $\alpha = (\{D_g\}_{g \in \mathcal{G}}, \{\alpha_g\}_{g \in \mathcal{G}}, \{w_{g,h}\}_{(g,h) \in \mathcal{G}^{(2)}})$ be a groupoid twisted partial action of $\G$ on $R$. Then the crossed product $R *_{\alpha,w} \G$ is associative.    
\end{theorem}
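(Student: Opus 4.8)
The plan is to verify associativity on homogeneous generators and reduce everything to the cocycle axioms (iv) and (vi) of Definition \ref{def21}. By distributivity it suffices to treat $x=a_g\delta_g$, $y=b_h\delta_h$, $z=c_t\delta_t$ with $a_g\in D_g$, $b_h\in D_h$, $c_t\in D_t$. First I would dispose of the composability issue. Since $d(gh)=d(h)$ and $r(ht)=r(h)$, the product $(xy)z$ is nonzero only when $(g,h)\in\G^2$ and $(gh,t)\in\G^2$, i.e. exactly when $(g,h)\in\G^2$ and $(h,t)\in\G^2$; the very same condition $(g,h,t)\in\G^3$ is forced by $x(yz)$, since there $(g,ht)\in\G^2$ reads $d(g)=r(ht)=r(h)$. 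Hence both triple products vanish off $\G^3$, and for $(g,h,t)\in\G^3$ associativity of the groupoid gives $(gh)t=g(ht)$, so the symbols $\delta$ coincide and it only remains to compare the two coefficients in $D_{ght}$.

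Writing the products out, the left grouping yields the coefficient $\alpha_{gh}(\alpha_{gh}^{-1}(A)\,c_t)\,w_{gh,t}$ with $A=\alpha_g(\alpha_g^{-1}(a_g)b_h)w_{g,h}$, whereas the right grouping yields $\alpha_g(\alpha_g^{-1}(a_g)\,\alpha_h(\alpha_h^{-1}(b_h)c_t)\,w_{h,t})\,w_{g,ht}$. The right coefficient is the easier one: the element $\alpha_g^{-1}(a_g)\,\alpha_h(\alpha_h^{-1}(b_h)c_t)$ lies in $D_{g^{-1}}D_hD_{ht}$ — here (iii) places $\alpha_h(\alpha_h^{-1}(b_h)c_t)$ inside $D_hD_{ht}$ — so axiom (vi) applies verbatim and collapses $w_{h,t}$ and $w_{g,ht}$ into $w_{g,h}w_{gh,t}$, turning the right coefficient into $\alpha_g(\alpha_g^{-1}(a_g)\alpha_h(\alpha_h^{-1}(b_h)c_t))\,w_{g,h}w_{gh,t}$.

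For the left coefficient I would use (iii) to check that $\alpha_{gh}^{-1}(A)\in D_{h^{-1}}D_{h^{-1}g^{-1}}$, so that $\alpha_{gh}^{-1}(A)c_t$ lies in the domain $D_{h^{-1}}D_{h^{-1}g^{-1}}$ on which (iv) is stated. Rewriting $\alpha_{gh}(-)=w_{g,h}^{-1}\alpha_g\alpha_h(-)w_{g,h}$, moving the conjugating $w_{g,h}^{-1}$ to the other side and cancelling the common right factor $w_{g,h}w_{gh,t}$, the whole problem reduces to the single identity
\[
\alpha_g\alpha_h\big(\alpha_{gh}^{-1}(A)c_t\big)=w_{g,h}\,\alpha_g\big(\alpha_g^{-1}(a_g)\alpha_h(\alpha_h^{-1}(b_h)c_t)\big),
\]
which I would then settle by unwinding $\alpha_{gh}^{-1}(A)$ through (iv) and (iii) once more, so that both sides match the simplified right coefficient.

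The main obstacle is not the groupoid structure but the non-unital bookkeeping, exactly as the authors flag in the introduction. Because the $D_g$ need not be unital and each $w_{g,h}$ is a multiplier of $\mathcal{M}(D_gD_{gh})$ rather than an honest element, I cannot split $\alpha_g(uv)=\alpha_g(u)\alpha_g(v)$ when only one factor lies in $D_{g^{-1}}$, nor commute $w_{g,h}$ past $\alpha_g$ or $\alpha_{gh}^{-1}$ uncritically. Every such move must be licensed by the idempotency and commutativity in (i) — writing an element of the idempotent ideal $D_{g^{-1}}$ as a finite sum of products that stay inside $D_{g^{-1}}$ — together with the precise domain equalities in (iii), so that each application of $\alpha_g$, of $\alpha_g^{-1}$, or of a multiplier always lands on a product genuinely sitting in the relevant ideal of an ideal. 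Keeping track of which ideal each intermediate expression inhabits is the delicate point; once the memberships are pinned down, the two coefficients coincide by (iv) and (vi) just as in the group case treated in \citep{dokuchaev2008crossed}.
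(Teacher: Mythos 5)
Your proposal is correct and follows essentially the same route as the paper: both reduce associativity to homogeneous elements, observe that the composability conditions for $((g,h),(gh,t))$ and $((h,t),(g,ht))$ both amount to $(g,h,t)\in\G^3$, and then settle the resulting coefficient identity in $D_{ght}$ via axioms (iii), (iv) and (vi). The only difference is that the paper outsources that final coefficient computation to the group case (\cite[Theorem 2.4]{dokuchaev2008crossed}), whereas you sketch carrying it out directly, with appropriate attention to the non-unital and multiplier bookkeeping.
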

\begin{proof}
    Let $a_g\delta_g, b_h\delta_h, c_k\delta_k \in R *_{\alpha,w} \G$. On the one hand,
    \begin{align*}
        [(a_g\delta_g)(b_h\delta_h)](c_k\delta_k) & = \begin{cases}
            (\alpha_g(\alpha_g^{-1}(a_g)b_h)w_{g,h}\delta_{gh})(c_k\delta_k), \text{ if } (g,h) \in \G^{(2)}, \\
            0, \text{ otherwise}
        \end{cases} \\
        & = \begin{cases}
            \alpha_{gh}(\alpha_{gh^{-1}}(\alpha_g(\alpha_g^{-1}(a_g)b_h)w_{g,h})c_k)w_{gh,k}\delta_{ghk}, \text{ if } (g,h,k) \in \G^{(3)}, \\
            0, \text{ otherwise}.
        \end{cases}
    \end{align*}
    
    On the other hand,
    \begin{align*}
        (a_g\delta_g)[(b_h\delta_h)(c_k\delta_k)] & = \begin{cases}
        (a_g\delta_g)(\alpha_h(\alpha_h^{-1}(b_h)c_k)w_{h,k}\delta_{hk}), \text{ if } (h,k) \in \G^{(2)}, \\
        0, \text{ otherwise}
        \end{cases} \\
        & = \begin{cases}
        \alpha_g(\alpha_{g}^{-1}(a_g)\alpha_h(\alpha_h^{-1}(b_h)c_k)w_{h,k}) \\ 
        w_{g,hk}\delta_{ghk}, \text{ if } (g,h,k) \in \G^{(3)}, \\
        0, \text{ otherwise}.
        \end{cases}
    \end{align*}
    
   Thus, we only need to consider the case where $(g,h,k) \in \G^{(3)}$. The result then follows from \cite[Theorem 2.4]{dokuchaev2008crossed}.
\end{proof}

Let $\alpha = (\{D_g\}_{g \in \mathcal{G}}, \{\alpha_g\}_{g \in \mathcal{G}}, \{w_{g,h}\}_{(g,h) \in \mathcal{G}^{(2)}})$ be a twisted partial action of a groupoid $\G$ on a ring $R$, and let $\beta = (E_g, \beta_g, u{g,h})_{(g,h) \in \G^{(2)}}$ be a twisted global action of $\G$ on a ring $T$ such that $\beta$ is a globalization of $\alpha$. Denote by $A = R *_{\alpha,w} \G$ and $B = T *_{\beta,u} \G$ the corresponding crossed products. For simplicity, we identify $\varphi_e(D_e)$ with $D_e$ for all $e \in \G_0$, where the maps $\varphi_e$ are the monomorphisms introduced in Definition \ref{def22}.

Suppose that $\G_0$ is finite. Then $A$ has unity $1_A = \sum_{g \in \G_0} 1_e\delta_e$.

\begin{prop} \label{propACB} Keeping the notations above, it follows that:
    \begin{enumerate}
        \item[(i)] $B1_A = \left \{ \sum_{g \in \G}^{\text{finite}} c_g\delta_g : c_g \in \beta_g(D_{d(g)}) \right \}.$
        
        \item[(ii)] $1_AB = \left \{ \sum_{g \in \G}^{\text{finite}} c_g\delta_g : c_g \in D_{r(g)} \right \}.$
        
        \item[(iii)] $1_AB1_A = A$.
        
        \item[(iv)] $B1_AB = B.$
    \end{enumerate}
\end{prop}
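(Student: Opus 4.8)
The plan is to compute each of the four one-sided or two-sided products directly from the multiplication rule in the crossed product, using the explicit form of the idempotent $1_A = \sum_{e \in \G_0} 1_e \delta_e$ and the structural identities coming from the fact that $\beta$ is a globalization of $\alpha$. The key computational fact I would isolate first is the effect of multiplying a homogeneous element $c_g \delta_g \in B$ on the right (resp.\ left) by a single summand $1_e \delta_e$ of $1_A$. For the product $(c_g \delta_g)(1_e \delta_e)$, the multiplication rule gives a nonzero result only when $(g,e) \in \G^2$, i.e.\ $d(g) = r(e) = e$, forcing $e = d(g)$; in that case $ge = g$ and the product is $\beta_g(\beta_g^{-1}(c_g) 1_{d(g)}) u_{g,d(g)} \delta_g$, which simplifies via $u_{g,d(g)} = \overline{1}_g$ (condition (v)) to $c_g \delta_g$ after noting $\beta_g^{-1}(c_g) \in E_{d(g)}$ and $1_{d(g)} = \varphi_{d(g)}(1_{d(g)})$ acts as the identity on $D_{d(g)}$. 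Symmetrically, $(1_e \delta_e)(c_g \delta_g)$ is nonzero only for $e = r(g)$, giving $1_{r(g)} c_g \delta_g$.

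With this lemma in hand I would treat the four items as follows. For (i), right multiplication by $1_A$ picks out, in each homogeneous component $c_g \delta_g$, the factor $1_{d(g)}$ acting on the right; since $\beta_g$ is an isomorphism $E_{d(g)} \to E_{g}$ and $1_{d(g)} = \varphi_{d(g)}(1_{d(g)})$ is the identity of the ideal $D_{d(g)} = \varphi_{d(g)}(D_{d(g)}) \subseteq E_{d(g)}$, the element $c_g$ gets replaced by its component in $\beta_g(D_{d(g)})$, which is exactly the claimed description. Item (ii) is the mirror image, with left multiplication selecting the component lying in $D_{r(g)} = E_{r(g)} \cap (\text{the ideal})$; here I use that $1_{r(g)}$ is the identity of the ideal $D_{r(g)}$ of $E_{r(g)}$. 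For (iii), I combine (i) and (ii): the $g$-component of $1_A B 1_A$ must simultaneously lie in $D_{r(g)}$ and in $\beta_g(D_{d(g)})$, and by globalization condition (iii) of Definition \ref{def22} (identifying $\varphi$ with inclusion) we have $D_{r(g)} \cap \beta_g(D_{d(g)}) = D_g$, so the $g$-component ranges over $D_g$, which is precisely $A = \bigoplus_g D_g \delta_g$.

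The main obstacle, and the only item that is not a direct consequence of the single-summand lemma, is (iv): $B 1_A B = B$. The inclusion $B 1_A B \subseteq B$ is trivial since $1_A \in B$ and $B$ is a ring, so the content is the reverse inclusion $B \subseteq B 1_A B$. Here I would argue that it suffices to recover each homogeneous generator $c_g \delta_g$ with $c_g \in E_g$. Using globalization condition (ii), $E_g = E_{r(g)} = \sum_{r(h) = r(g)} \beta_h(\varphi_{d(h)}(D_{d(h)}))$, so I may reduce to the case $c_g = \beta_h(a)$ with $a \in D_{d(h)}$ and $r(h) = r(g)$; the plan is then to exhibit $c_g \delta_g$ explicitly as a product of an element of $B 1_A$ (of the form produced in (i), living in degree $h$) and an element of $1_A B$ (living in degree $h^{-1}g$), whose degrees compose to $g$. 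Concretely, writing $c_g \delta_g = (\beta_h(a) \delta_h)\big((\text{something in } D_{d(h)=r(h^{-1}g)})\,\delta_{h^{-1}g}\big)$ and checking that the first factor lies in $B 1_A$ by (i) and the second in $1_A B$ by (ii), the multiplication rule reassembles the desired homogeneous element. The delicate point to verify is that the twisting factors $u$ produced by the crossed-product multiplication can be absorbed, i.e.\ that the invertibility of $u_{h, h^{-1}g}$ in $\mathcal{U}(E_h)$ lets one solve for the required second factor; this is where I expect the real work to lie, and I would record it as a short separate computation before assembling the final equality $B = B 1_A B$.
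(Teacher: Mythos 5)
Your proposal is correct and follows essentially the same route as the paper: items (i)--(iii) come from the action of the single summands $1_{d(g)}\delta_{d(g)}$ and $1_{r(g)}\delta_{r(g)}$ (the paper proves (iii) by the direct computation $1_A(a\delta_g)1_A = a1_g\delta_g$ together with $E_{r(g)}1_g = D_g$, which is exactly your intersection argument since $1_g = 1_{r(g)}\beta_g(1_{d(g)})$), and item (iv) uses the same decomposition $E_g = \sum_{r(h)=r(g)}\beta_h(D_{d(h)})$ followed by factoring $\beta_h(a)\delta_g$ through degree $h$. The one step you defer does go through as you predict: the paper exhibits the second factor as $1_{d(h)}u_{h^{-1},h}^{-1}u_{h^{-1},g}\,\delta_{h^{-1}g}$ and absorbs the resulting twists via Definition~\ref{def21}(vi), whereas solving $\beta_h(az)u_{h,h^{-1}g}=\beta_h(a)$ directly, as you propose, yields $z=\beta_h^{-1}\bigl(\beta_h(1_{d(h)})\,u_{h,h^{-1}g}^{-1}\bigr)\in E_{h^{-1}g}$, which works in two lines and does not even need the cocycle identity.
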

\begin{proof}

(i): For all $g \in \G$, $s \in E_g$ there is $t \in E_{g^{-1}}$ such that $s = \beta_g(t)$. Hence,
\begin{align*}
    (s\delta_g)1_A = (s\delta_g)(1_{d(g)}\delta_{d(g)}) = \beta_g(\beta_g^{-1}(s)1_{d(g)})u_{g,d(g))}\delta_{gd(g)} = \beta_g(\beta_g^{-1}(s)1_{d(g)})\delta_g = \beta_g(t1_{d(g)})\delta_g,
\end{align*}
where $c_g := \beta_g(t1_{d(g)}) \in \beta_g(D_{d(g)})$.

For the reverse inclusion, take $a \in D_{d(g)}$ and $c_g = \beta_g(a)$. Then
\begin{align*}
    c_g\delta_g = \beta_g(a)\delta_g & = \beta_g(a1_{d(g)})\delta_g = \beta_g(a)\beta_g(1_{d(g)})\delta_g = c_g\beta_g(1_{d(g)})\delta_g \\
    & = \beta_g(\beta_g^{-1}(c_g)1_{d(g)})u_{g,d(g)}\delta_{gd(g)} = (c_g\delta_g)(1_{d(g)}\delta_{d(g)}) = (c_g\delta_g)1_A.
\end{align*}

(ii): Let $s \in E_g = E_{r(g)}$. We have that
\begin{align*}
    1_A(s\delta_g) = (1_{r(g)}\delta_{r(g)})(s \delta_g) & = \beta_{r(g)}(\beta_{r(g)}^{-1}(1_{r(g)})s)u_{r(g),g}\delta_{r(g)g} = 1_{r(g)}\beta_{r(g)}(s)\delta_g = 1_{r(g)}s\delta_g,
\end{align*}
and since $D_{r(g)}$ is an ideal of $E_{r(g)}$, $1_{r(g)}s \in D_{r(g)}$.

For the reverse inclusion, let $c_g \in D_{r(g)}$. Then $c_g\delta_g = (1_{r(g)}\delta_{r(g)})(c_g\delta_g) = 1_A(c_g\delta_g)$.

(iii): The inclusion $A \subseteq 1_AB1_A$ is clear. Let $a \in E_g = E_{r(g)}$. Then $a1_g \in E_{r(g)}1_g = D(g)$. Thus
\begin{align*}
    1_A(a\delta_g)1_A & = (1_r(g)\delta_{r(g)})(a\delta_g)(1_{d(g)}\delta_{d(g)}) = (1_{r(g)}a\delta_g)(1_{d(g)}\delta_{d(g)}) = \beta_g(\beta_g^{-1}(1_{r(g)}a)1_{d(g)})u_{g,d(g)}\delta_g.
\end{align*}

Since $1_{d(g)} = 1_{r(g^{-1})} \in E_{g^{-1}}$, we have that
\begin{align*}
    1_A(a\delta_g)1_A = 1_{r(g)}a\beta_g(1_{d(g)})\delta_g = a1_{r(g)}\beta_g(1_{d(g)})\delta_g = a1_g\delta_g \in A.
\end{align*}

(iv): We just need to show that $B \subseteq B1_AB$. Since $E_h = \sum_{r(g) = r(h)} \beta_g(D_{d(g)})$, for all $h \in \G$, so the result follows from (i) and (ii). In fact, for all $a \in D_{d(g)}$ with $r(g) = r(h)$,
\begin{align*}
    \beta_g(a)\delta_h \overset{(*)}{=} (\beta_g(a)\delta_g)(1_{d(g)}u_{g^{-1},g}^{-1}u_{g^{-1},h}\delta_{g^{-1}h}) \in (B1_A)B = B1_AB.
\end{align*}

The equality $(*)$ is valid because
\begin{align*}
    & (\beta_g(a)\delta_g)(1_{d(g)}u_{g^{-1},g}^{-1}u_{g^{-1},h}\delta_{g^{-1}h}) = \beta_g(\beta_g^{-1}(\beta_g(a))1_{d(g)}u_{g^{-1},g}^{-1}u_{g^{-1},h})u_{g,g^{-1}h}\delta_{g(g^{-1}h)} \\
    & = \beta_g(a1_{d(g)}u_{g^{-1},g}^{-1}u_{g^{-1},h})u_{g,g^{-1}h}\delta_h = \beta_g(au_{g^{-1},g}^{-1})u_{g,g^{-1}}u_{r(h),h}\delta_h \\
    & \overset{(**)}{=} \beta_g(a)u_{g,g^{-1}}^{-1}u_{g,g^{-1}}\delta_h= \beta_g(a)\delta_h,
\end{align*}
where $(**)$ follows from Definition \ref{def21} taking $h = g^{-1}, k = g$ and $a = aw_{g^{-1},g}^{-1}$.
\end{proof}

A Morita context is a six-tuple ($R,R',M,M',\tau,\tau'$) where $R,R'$ are rings, $M$ is a $R,R'$-bimodule, $M'$ is a $R',R$-bimodule, $\tau : M \otimes_{R'} M' \to R$, $\tau' : M' \otimes_{R} M \to R'$ are bimodule maps such that
\begin{align*}
    \tau(x \otimes x')y = x\tau'(x' \otimes y), & \text{ for all } x,y \in M, x' \in M'
\end{align*}
and
\begin{align*}
    \tau'(x' \otimes x)y' = x'\tau(x \otimes y'), & \text{ for all } x',y' \in M', x \in M.
\end{align*}

 It follows from \cite[Theorems 4.1.4 and 4.1.17]{rowen2012ring} that if $\tau$ and $\tau'$ are onto, then the categories of $R$-modules and $R'$-modules are equivalent. In this case, we say that $R$ and $R'$ are \emph{Morita equivalent}.

The next result generalizes \cite[Theorem 3.2]{bagio} and \cite[Theorem 3.1]{dokues}.

\begin{theorem}\label{morita}
    Let $\alpha$ be a globalizable groupoid twisted partial action of a finite groupoid $\G$ on a unital ring $R$ and $\beta$ be a globalization of $\alpha$ of $\G$ on $T$. Then the rings  $A = R *_{\alpha,w} \G$ and $B = T *_{\beta,u} \G$ are Morita equivalent.
\end{theorem}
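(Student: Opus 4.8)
The plan is to build an explicit Morita context between $A = R \rtimes_{\alpha,w} \G$ and $B = T \rtimes_{\beta,u} \G$ using the idempotent $1_A \in B$, which by the computations in Proposition \ref{propACB} plays the role of a ``full corner.'' The key structural observation, already packaged in Proposition \ref{propACB}, is that $1_A$ is an idempotent of $B$ (since $R$ is unital and $\G_0$ is finite, $1_A = \sum_{e \in \G_0} 1_e \delta_e$ is central-like enough to be idempotent), that $A = 1_A B 1_A$, and that $1_A$ is a \emph{full} idempotent in the sense that $B 1_A B = B$. These three facts are precisely the hypotheses needed for the standard corner-ring Morita theory, so the bulk of the work has in fact been front-loaded into the preceding proposition.

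Concretely, first I would set $M = 1_A B$ and $M' = B 1_A$. Then $M$ is naturally a left $1_A B 1_A = A$-module and a right $B$-module, while $M'$ is a left $B$-module and a right $A$-module; both are bimodules because $1_A$ is idempotent and multiplication in $B$ is associative (Theorem \ref{teoassoc} applied to $\beta$). Next I would define the two context maps as restrictions of multiplication in $B$: let $\tau : M \otimes_B M' \to A$ be induced by $\tau(x \otimes x') = x x'$, noting that for $x \in 1_A B$ and $x' \in B 1_A$ the product $x x'$ lies in $1_A B 1_A = A$; symmetrically let $\tau' : M' \otimes_A M \to B$ be induced by $\tau'(x' \otimes x) = x' x \in B 1_A B = B$. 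The bimodule-map property and the two associativity (``associativity of the bracket'') identities
\begin{align*}
    \tau(x \otimes x') y &= x x' y = x \tau'(x' \otimes y), \\
    \tau'(x' \otimes x) y' &= x' x y' = x' \tau(x \otimes y'),
\end{align*}
are then immediate from associativity of $B$, since every bracket is literally a product of elements of $B$.

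The crux is surjectivity of $\tau$ and $\tau'$, which is exactly where Proposition \ref{propACB} does the heavy lifting. For $\tau'$, surjectivity onto $B$ is the statement $B 1_A B = B$, which is item (iv). For $\tau$, surjectivity onto $A$ follows from item (iii), $1_A B 1_A = A$: any element of $A$ is a finite sum of products $x x'$ with $x \in 1_A B$, $x' \in B 1_A$, because $1_A B 1_A = (1_A B)(B 1_A)$ once one checks the image of $\tau$ is all of $1_A B 1_A$; here the idempotency $1_A^2 = 1_A$ guarantees $1_A B 1_A = (1_A B 1_A)(1_A B 1_A) \subseteq \operatorname{im}\tau$, so $\tau$ is onto. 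With both maps surjective, the cited Morita theory \cite[Theorems 4.1.4 and 4.1.17]{rowen2012ring} yields an equivalence of the module categories, hence $A$ and $B$ are Morita equivalent.

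I expect the main obstacle to be not the abstract Morita machinery but the verification that $1_A$ is genuinely idempotent in $B$ and that the identifications $M \otimes_B M' \cong 1_A B 1_A$ and $M' \otimes_A M \cong B 1_A B$ hold as claimed, rather than merely the multiplication maps being surjective onto them. In the groupoid setting one must be careful that the $\delta_e$ for distinct identities $e \in \G_0$ interact correctly: products $(1_e \delta_e)(1_{e'} \delta_{e'})$ vanish unless $e = e'$ (as $(e,e') \in \G^2$ forces $d(e) = r(e')$, i.e. $e = e'$ for identities), which is what makes $1_A$ idempotent. Once this bookkeeping with the finite set $\G_0$ is settled, every remaining step is a formal consequence of associativity (Theorem \ref{teoassoc}) together with items (i)--(iv) of Proposition \ref{propACB}.
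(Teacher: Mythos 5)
Your proposal is correct and follows essentially the same route as the paper: the paper's bimodules $M=\bigoplus_{g}D_{r(g)}\delta_g$ and $N=\bigoplus_{g}\beta_g(D_{d(g)})\delta_g$ are exactly $1_AB$ and $B1_A$ by Proposition \ref{propACB}(i)--(ii), the context maps are the same multiplication maps, and surjectivity comes from items (iii)--(iv) just as you argue. The only cosmetic difference is that the paper verifies the bimodule structures by explicit element computations (its Assertions 1 and 2) where you derive them formally from idempotency of $1_A$ and associativity.
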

\begin{proof}
    Since $\G$ is finite, $R$ is unital and $\alpha$ is globalizable, it follows that $R$, $A$ and $B$ are unital rings. Set $M = \bigoplus_{g \in \G} D_{r(g)}\delta_g \subseteq B$ and $N = \bigoplus_{g \in \G} \beta_g(D_{d(g)})\delta_g \subseteq B$.  We first show that $M$ is an $(A,B)$-bimodule and that $N$ is a $(B,A)$-bimodule.
    
    \noindent\textbf{Assertion 1:} $M$ is a right ideal of $B$ and $N$ is a left ideal of $B$.
    
    Let $m\delta_g \in M$ and $b\delta_h \in B$. Then
    \begin{align*}
        (m\delta_g)(b\delta_h) = \begin{cases}
            m\beta_g(b)u_{g,h}\delta_{gh}, \text{ if } (g,h) \in \G^{(2)} \\
            0, \text{ otherwise}.
        \end{cases}
    \end{align*}
    
    Since $b \in E_h = E_{r(h)} = E_{d(g)} = E_{g^{-1}}$, it follows that $\beta_g(b) \in E_g$. Moreover, since $D_{r(g)}$ is an ideal of $E_{r(g)} = E_g$, we have $m\beta_g(b) \in D_{r(g)}$. Furthermore, because $E_{r(g)} = E_g = E_{gh}$, we obtain $D_{r(g)}E_g \subseteq D_gD_{gh}$, hence $m\beta_g(b)u_{g,h} = m\beta_g(b)w_{g,h} \in D_{g}D_{gh} \subseteq D_{r(g)}$. Therefore, $M$ is a right ideal of $B$.
    
    Let $b\delta_g \in B$ and $n\delta_h \in N$. Since $n = \beta_h(n')$ for some $n' \in D_{d(h)}$, we have
    \begin{align*}
        (b\delta_g)(n\delta_h) & = \begin{cases}
            b\beta_g(n)u_{g,h}\delta_{gh}, \text{ if } (g,h) \in \G^{(2)}, \\
            0, \text{ otherwise}
        \end{cases} \\
        & = \begin{cases}
            b\beta_g(\beta_h(n'))u_{g,h}\delta_{gh}, \text{ if } (g,h) \in \G^{(2)}, \\
            0, \text{ otherwise}
        \end{cases} \\
        & = \begin{cases}
        bu_{g,h}\beta_{gh}(n')\delta_{gh}, \text{ if } (g,h) \in \G^{(2)}, \\
        0, \text{ otherwise}.
        \end{cases}
    \end{align*}
    
    Now, $\beta_{gh}(D_{d(h)})$ is an ideal of $E_{d(h)} = E_h$, and hence $bu_{g,h}\beta_{gh}(n')$ $\in \beta_{gh}(D_{d(h)}) = \beta_{gh}(D_{d(gh)})$.
    
    \noindent\textbf{Assertion 2:} $AM \subseteq M$ and $NA \subseteq N$. 
    
    First, note that the products $AM$ and $NA$ are well defined by Proposition \ref{propACB}. Let $a\delta_g \in A$ and $m\delta_h \in M$. Then
    \begin{align*}
        (a\delta_g)(m\delta_h) = \begin{cases}
        a\beta_g(m)u_{g,h}\delta_{gh}, \text{ if } (g,h) \in \G^{(2)}, \\
        0, \text{ otherwise,}
        \end{cases}
    \end{align*}
    and this element belongs to $M$, since $D_{r(g)}$ is an ideal of $M$.
    
    Now, let $n\delta_g \in N$ and $a\delta_h \in A$. Since $n = \beta_g(n')$ for some $n' \in D_{d(g)}$, we have
    \begin{align*}
        (n\delta_g)(a\delta_h) & = \begin{cases}
            n\beta_g(a)u_{g,h}\delta_{gh}, \text{ if } (g,h) \in \G^{(2)}, \\
            0, \text{ otherwise}
        \end{cases}  = \begin{cases}
            \beta_g(n'a)u_{g,h}\delta_{gh}, \text{ if } (g,h) \in \G^{(2)}, \\
            0, \text{ otherwise}.
        \end{cases} 
    \end{align*}
    
    Since $a \in D_h \triangleleft D_{r(h)} = D_{d(g)}$ and $n' \in D_{d(g)}$, it follows that $a n' \in D_h$. Hence, there exists $x \in D_{h^{-1}}$ such that $a n' = \alpha_h(x)$. In this way,
    \begin{align*}
        \beta_g(n'a)u_{g,h}\delta_{gh} = \beta_{g}(\alpha_h(x))u_{g,h}\delta_{gh} = \beta_g(\beta_h(x))u_{g,h}\delta_{gh} = u_{g,h}\beta_{gh}(x)\delta_{gh} \in N.
    \end{align*}
    
    Assertions 1 and 2 ensure that $M$ is an $(A,B)$-bimodule and that $N$ is a $(B,A)$-bimodule. We now define
    \begin{align*}
        \tau : M \otimes_B N & \to A \\
        m \otimes n & \mapsto mn
    \end{align*}
    and 
    \begin{align*}
        \tau' : N \otimes_A M & \to B \\
        n \otimes m & \mapsto nm.
    \end{align*}
    
    It is easy to see that $\tau$ and $\tau'$ are bimodule maps and onto. Furthermore,
    \begin{align*}
        \tau(m \otimes n)m' = (mn)m' = m(nm') = m\tau'(n \otimes m')
    \end{align*}
    and
    \begin{align*}
        \tau'(n \otimes m)n' = (nm)n' = n(mn') = n\tau(m \otimes n'),
    \end{align*}
    for all $m,m' \in M$ and $n,n' \in N$ by Theorem \ref{teoassoc}. Therefore, $A$ and $B$ are Morita equivalent. \qedhere

\end{proof}

\section{Partial Projective Representations}
In this section, we extend the notion of partial projective representations introduced in \cite{dokuchaev2010partialprojectiverep,dokuchaev2012partialprojectiverep2} to the groupoid setting and study the structure of the associated partial Schur multiplier.

Let $\mathbb{K}$ be a field and let $\mathbb{K}^*$ denote its multiplicative group.

\begin{defi}
Let $\mathcal{S}$ be an inverse semigroupoid and $\mathcal{T}$ a semigroupoid. A map $\varphi : \mathcal{S} \to \mathcal{T}$ is said a \emph{partial homomorphism} if $(s,t) \in \mathcal{S}^{(2)}$ implies $\varphi(s)\varphi(t) \in \mathcal{T}^{(2)}$ and in this case
\begin{align*}
    \varphi(s^{-1})\varphi(s)\varphi(t) & = \varphi(s^{-1})\varphi(st), \\
    \varphi(s)\varphi(t)\varphi(t^{-1}) & = \varphi(st)\varphi(t^{-1}).
\end{align*}
\end{defi}

\begin{exe}
Consider $\G$ a groupoid and $\mathcal{E}(\G)$ the Exel's inverse category constructed in \cite{lata2020inverse} with generators $\{ [g] : g \in \G \}$ and relations
\begin{align*}
    ([g],[h]) \in \mathcal{E}(\G)^{(2)} \iff (g,h) & \in \G^{(2)}, \; [g^{-1}][g][h] = [g^{-1}][gh], \\
    [g][h][h^{-1}] = [gh][h^{-1}], & \; [r(g)][g] = [g] = [g][d(g)].
\end{align*}

The inclusion $f :\G \to \mathcal{E}(\G)$ given by $g \mapsto [g]$ is a partial homomorphism.
\end{exe}

Consider the semigroup $\mathcal{M}_n(\mathbb{K})$ of $n \times n$ matrices over $\mathbb{K}$, endowed with the usual matrix multiplication. The notion of projective representations of semigroups \cite{dokuchaev2010partialprojectiverep} is closely related to matrix monoids; however, we shall adopt a more general definition.

\begin{defi}
A \emph{$\mathbb{K}$-monoid} is a monoid $M$ with zero element $0_M$, endowed with a scalar multiplication map $\mathbb{K} \times M \to M$ such that, for all $x,y \in M$ and $a,b \in \mathbb{K}$,
\[
a(bx) = (ab)x, \qquad 1_{\mathbb{K}}x = x, \qquad a(xy) = (ax)y = x(ay),
\]
and, in addition,
\[
x 0_{\mathbb{K}} = 0_{\mathbb{K}} x = 0_M, \quad \text{for all } x \in M.
\]
We say that $M$ is a \emph{$\mathbb{K}$-cancellative monoid} if it is a $\mathbb{K}$-monoid and $ax = bx$ implies $a=b$ for every $0_M \neq x \in M$ and $a,b \in \mathbb{K}$.
\end{defi}

If $M$ is a $\mathbb{K}$-monoid, we define a relation $\lambda$ on $M$ by
\[
x \,\lambda\, y \iff x = ay \quad \text{for some } a \in \mathbb{K}^* .
\]
It is straightforward to verify that $\lambda$ is an equivalence relation and a congruence on $M$, and hence the quotient semigroup $\mathcal{P}(M) := M/\lambda$ is well defined.

Let $\Delta \colon \mathcal{S} \to \mathcal{P}(M)$ be a semigroupoid homomorphism and let $\xi \colon M \to \mathcal{P}(M)$ denote the canonical projection. Let $\xi' : \mathcal{P}(M) \to M$ be a choice of representatives. Clearly $\xi'$ is a right inverse to $\xi$. Define $\Gamma = \xi'\Delta$. Then $\Delta = \xi\xi'\Delta = \xi\Gamma$.

Since both $\Delta$ and $\xi$ are homomorphisms, for any composable pair $(x,y) \in \mathcal{S}^{(2)}$ we have
\[
\xi\big(\Gamma(xy)\big)
= \Delta(xy)
= \Delta(x)\Delta(y)
= \xi\big(\Gamma(x)\big)\,\xi\big(\Gamma(y)\big)
= \xi\big(\Gamma(x)\Gamma(y)\big).
\]
It follows that $\Gamma(xy)$ and $\Gamma(x)\Gamma(y)$ vanish simultaneously and, whenever they are nonzero, they differ by multiplication by an element of $\mathbb{K}^*$. This motivates the following definition.

\begin{defi}
    Let $\mathcal{S}$ be a semigroupoid and $M$ a $\mathbb{K}$-cancellative monoid. A \emph{projective representation} of $\mathcal{S}$ on $M$ is a map $\Gamma : \mathcal{S} \to M$ such that $\xi\Gamma : \mathcal{S} \to \mathcal{P}(M)$ is a semigroupoid homomorphism.
\end{defi}

Equivalently, we have the following characterization.

\begin{defi} \label{defrepprojcat}
    Let $\mathcal{S}$ be a semigroupoid and $M$ a $\mathbb{K}$-cancellative monoid. A projective representation of $\mathcal{S}$ on $M$ is a map $\Gamma : \mathcal{S} \to M$ such that
    \begin{enumerate}
        \item[(i)] If $(x,y) \in \mathcal{S}^{(2)}$, then
    \[
    \Gamma(xy) = 0 \iff \Gamma(x)\Gamma(y) = 0,
    \]
    and if $(x,y) \notin \mathcal{S}^{(2)}$, then $\Gamma(x)\Gamma(y) = 0$.
        
        \item[(ii)] There exists a partially defined map $\rho \colon \mathcal{S} \times \mathcal{S} \to \mathbb{K}^*$ with $\text{dom} (\rho) = \{ (x,y) \in \mathcal{S}^{(2)} : \Gamma(xy) \neq 0 \}$ such that
    \begin{equation} \label{iggammarho}
        \Gamma(x)\Gamma(y) = \Gamma(xy)\,\rho(x,y),
    \end{equation}
    for all $(x,y) \in \text{dom} (\rho)$.
\end{enumerate}
The map $\rho$ is called a \emph{factor set}.
\end{defi}

Let $\mathcal{S}$ be an inverse semigroupoid. We now introduce the notion of a \emph{partial} projective representation.

\begin{defi}
    Let $\mathcal{S}$ be an inverse semigroupoid and $M$ a $\mathbb{K}$-cancellative monoid. A \emph{partial projective representation} of $\mathcal{S}$ on $M$ is a map $\Gamma : \mathcal{S} \to M$ such that $\xi\Gamma : \mathcal{S} \to \mathcal{P}(M)$ is a partial semigroupoid homomorphism.
\end{defi}

Concerning Definition~\ref{defrepprojcat}, observe that if $x \in M$ and $a \in \mathbb{K}$, then
\[
xa = x(a1_M) = a(x1_M) = ax.
\]
Moreover, if $\Gamma(xy) = 0$ or $(x,y) \notin \mathcal{S}^{(2)}$, we may set $\rho(x,y) = 0_{\mathbb{K}}$ without affecting the validity of \eqref{iggammarho}. In this way, we may assume that dom$(\rho) = \mathcal{S} \times \mathcal{S}$. Furthermore, if $M$ is $\mathbb{K}$-cancellative, then $\rho$ is uniquely determined by $\Gamma$.

Similarly to the case of semigroups \cite{dokuchaev2010partialprojectiverep}, applying the definition of projective representation to the associativity relation
\[
\Gamma(x)\big(\Gamma(y)\Gamma(z)\big) = \big(\Gamma(x)\Gamma(y)\big)\Gamma(z),
\]
we obtain
\[
\rho(x,y)\rho(xy,z) = \rho(x,yz)\rho(y,z),
\]
for all $(x,y,z) \in \mathcal{S}^{(3)}$. We refer to this identity as the \emph{2-cocycle condition}.

\begin{theorem}
Let $\mathcal{C}$ be a category. A map $\rho : \mathcal{C} \times \mathcal{C} \to \mathbb{K}$ is a factor set for some projective representation of $\mathcal{C}$ if and only if
\begin{align} \label{cat3}
    \rho(x,y)\rho(xy,z) = \rho(x,yz)\rho(y,z)
\end{align}
and
\begin{align} \label{cat4}
    \rho(x,y) = 0 \iff \rho(r(x),xy) = 0,
\end{align}
for all $(x,y,z) \in \mathcal{C}^{(3)}$.
\end{theorem}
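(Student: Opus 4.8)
The plan is to prove the two implications separately. The direction ``factor set $\Rightarrow$ (\ref{cat3}) and (\ref{cat4})'' is short: (\ref{cat3}) is precisely the 2-cocycle identity already derived above from associativity of $\Gamma(x)(\Gamma(y)\Gamma(z))=(\Gamma(x)\Gamma(y))\Gamma(z)$, so nothing new is needed there. For (\ref{cat4}) I would use the convention (introduced after Definition \ref{defrepprojcat}) that extends $\rho$ by $0$ off its domain, which gives, for every $(x,y)\in\mathfrak{C}^2$, the equivalence $\rho(x,y)=0\iff\Gamma(xy)=0$. Since $r(x)$ is the left identity of $xy$ (note $r(xy)=r(x)$, hence $r(x)(xy)=xy$), the same equivalence applied to the pair $(r(x),xy)$ reads $\rho(r(x),xy)=0\iff\Gamma(r(x)\,xy)=\Gamma(xy)=0$. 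Comparing the two equivalences yields (\ref{cat4}) at once.

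The substance is the converse: given $\rho$ satisfying (\ref{cat3}) and (\ref{cat4}), I would build a $\mathbb{K}$-cancellative semigroup $S$ together with a projective representation $\Gamma$ whose factor set is $\rho$. I take $S=\{\lambda\delta_w : \lambda\in\mathbb{K}^*,\ w\in\mathfrak{C},\ \rho(r(w),w)\neq 0\}\cup\{0\}$, with the $\mathbb{K}$-action scaling the coefficient (and $0_\mathbb{K}\cdot s=0$), and multiplication $\delta_x\delta_y=\rho(x,y)\delta_{xy}$ whenever $\exists xy$ and $\rho(x,y)\neq 0$, and $\delta_x\delta_y=0$ otherwise. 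I set $\Gamma(w)=\delta_w$ if $\rho(r(w),w)\neq 0$ and $\Gamma(w)=0$ otherwise. That this makes sense rests on (\ref{cat4}): the condition $\rho(x,y)\neq 0$ forces $\rho(r(xy),xy)=\rho(r(x),xy)\neq 0$, so the target symbol $\delta_{xy}$ really does lie in $S$; in particular whether $\Gamma$ vanishes on an element depends only on that element and not on any chosen factorization. The $\mathbb{K}$-semigroup and $\mathbb{K}$-cancellativity axioms are then immediate because $\mathbb{K}$ is a field.

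Associativity of $S$ is where (\ref{cat3}) is used. After pulling scalars out it suffices to compare $(\delta_a\delta_b)\delta_c$ with $\delta_a(\delta_b\delta_c)$. When $(a,b,c)\notin\mathfrak{C}^3$ both sides vanish, using the category identities $\exists ab\iff\exists a(bc)$ (given $\exists bc$) and its symmetric counterpart. When $(a,b,c)\in\mathfrak{C}^3$ both products land on $\delta_{abc}$ with coefficients $\rho(a,b)\rho(ab,c)$ and $\rho(a,bc)\rho(b,c)$ respectively, which coincide by (\ref{cat3}); the same identity also makes the two coefficients vanish simultaneously, so the product is well defined in every case.

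It remains to check that $\Gamma$ is a projective representation with factor set $\rho$, that is, the two clauses of Definition \ref{defrepprojcat}, and here I expect the main obstacle. The factor-set equation $\Gamma(x)\Gamma(y)=\Gamma(xy)\rho(x,y)$ is a routine case split once the supports are known to match, so the crux is clause (i): $\Gamma(xy)=0\iff\Gamma(x)\Gamma(y)=0$. Since $\Gamma(xy)=0\iff\rho(x,y)=0$, this amounts to proving that $\rho(x,y)\neq 0$ already forces $\Gamma(x)\neq 0$ and $\Gamma(y)\neq 0$, i.e. $\rho(r(x),x)\neq 0$ and $\rho(r(y),y)\neq 0$. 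I would extract this from (\ref{cat3}) and (\ref{cat4}) by testing the cocycle identity on the two degenerate triples $(r(x),x,y)$ and $(x,d(x),y)$. The first gives $\rho(r(x),x)\rho(x,y)=\rho(r(x),xy)\rho(x,y)$, so cancelling $\rho(x,y)\neq 0$ in the field and invoking (\ref{cat4}) yields $\rho(r(x),x)=\rho(r(x),xy)\neq 0$; the second gives $\rho(x,d(x))\rho(x,y)=\rho(x,y)\rho(d(x),y)$, and combining the cancelled identity $\rho(x,d(x))=\rho(d(x),y)=\rho(r(y),y)$ with $\rho(x,d(x))\neq 0$ (again via (\ref{cat4}), since $\rho(x,d(x))=0\iff\rho(r(x),x)=0$) yields $\rho(r(y),y)\neq 0$. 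With these two facts clause (i) follows formally, completing the verification. The delicate point throughout is the bookkeeping of the domain/range conditions, so that every composite written down genuinely exists in $\mathfrak{C}$.
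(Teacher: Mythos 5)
Your proof is correct, and the forward direction coincides with the paper's. The converse, however, uses a genuinely different model: the paper realizes $\Gamma$ inside the matrix semigroup $\mathcal{M}_{|\mathfrak{C}|}(\mathbb{K})$ via monomial matrices $\gamma_{u,v}(x)=\rho(u,x)$ for $ux=v$ (a regular-representation-style construction), whereas you build a ``contracted twisted category semigroup'' $S=\{\lambda\delta_w\}\cup\{0\}$ with $\delta_x\delta_y=\rho(x,y)\delta_{xy}$ and set $\Gamma(w)=\delta_w$. Both hinge on the same two inputs — \eqref{cat3} gives associativity and the factor-set identity, \eqref{cat4} matches the supports of $\Gamma(xy)$ and $\Gamma(x)\Gamma(y)$ — but the bookkeeping lands in different places. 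In the matrix model $\Gamma(x)=0$ only when $\rho(u,x)=0$ for \emph{every} admissible $u$, so the paper must show $\rho(x,y)=0\Rightarrow\rho(u,xy)=0$ for all $u$ (its contradiction argument at the end); in your model $\Gamma(x)=0$ is pinned to the single value $\rho(r(x),x)$, so you instead need the two degenerate-triple computations showing $\rho(x,y)\neq 0\Rightarrow\rho(r(x),x)\neq 0$ and $\rho(r(y),y)\neq 0$, which you carry out correctly (and which also guarantee that $S$ is closed under multiplication, via \eqref{cat4} applied to $r(xy)=r(x)$). Your construction is leaner — no matrices indexed by a possibly infinite set, and the factor set of the resulting $\Gamma$ is visibly $\rho$ — at the cost of having to verify $\mathbb{K}$-cancellativity and associativity of $S$ by hand, which you do. Both proofs are complete; yours is a legitimate alternative.
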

\begin{proof}
($\Rightarrow$): We already know that (\ref{cat3}) holds. Now,
\begin{align*}
    \rho(x,y) = 0 \iff \Gamma(xy) = 0 \iff \Gamma(r(x)xy) = 0 \iff \rho(r(x),xy) = 0.
\end{align*}

($\Leftarrow$): Consider the monoid with 0 $M = \mathcal{M}_{|\mathcal{C}|}(\mathbb{K})$, where $|\mathcal{C}|$ denotes the cardinality of $\mathcal{C}$. For all $x \in \mathcal{C}$, consider the $|\mathcal{C}| \times |\mathcal{C}|$ matrix  
\begin{align*}
    \Gamma(x) = (\gamma_{u,v}(x))_{u,v \in \mathcal{C}}
\end{align*}
defined by
\begin{align*}
    \gamma_{u,v}(x) = \begin{cases}
        \rho(u,x), \text{ if } (u,x) \in \mathcal{C}^{(2)} \text{ and } ux = v. \\
        0, \text{ otherwise.}
    \end{cases}
\end{align*}

The product between these matrices is always defined since they are monomial in the rows.

Take $x,y \in \mathcal{C}$. Write $\Gamma(x)\Gamma(y) = (\delta_{u,v})_{u,v \in \mathcal{C}}$. Then
\begin{align*}
    \delta_{u,v} = \sum_{t \in \mathcal{C}} \gamma_{u,t}(x)\gamma_{t,v}(y)
    & = \begin{cases}
    \rho(u,x)\gamma_{ux,v}(y), \text{ if } (u,x) \in \mathcal{C}^{(2)}, \\
    0, \text{ otherwise}
    \end{cases} \\
    & = \begin{cases}
    \rho(u,x)\rho(ux,y), \text{ if } (u,x,y) \in \mathcal{C}^{(3)} \text{  and } uxy = v, \\
    0, \text{ otherwise}.
    \end{cases}
\end{align*}

If $(x,y) \notin \mathcal{C}^{(2)}$ then $(u,x,y) \notin \mathcal{C}^{(3)}$, from where it follows that $\Gamma(x)\Gamma(y) = 0$ as we wanted. If $(u,x,y) \in \mathcal{C}^{(3)}$, we have that (\ref{cat3}) holds, so
\begin{align*}
    \delta_{u,v} & =  \begin{cases}
        \rho(u,xy)\rho(x,y), \text{ if } (u,x) \in \mathcal{C}^{(2)} \text{  and } uxy = v, \\
        0, \text{ otherwise}
    \end{cases} \\
    & = \gamma_{u,v}(xy)\rho(x,y),
\end{align*}
that is, $\Gamma(x)\Gamma(y) = \Gamma(xy)\rho(x,y)$.

In particular, $\Gamma(xy) = 0 \implies \Gamma(x)\Gamma(y) = 0$. By the construction of $\Gamma$, if $(x,y) \in \mathcal{C}^{(2)}$, we have that
\begin{align*}
    \Gamma(xy) = 0 \iff \rho(u,xy) = 0, \text{ for all } u \in \mathcal{C} \text{ such that } (u,x) \in \mathcal{C}^{(2)},
\end{align*}
and
\begin{align*}
    \Gamma(x)\Gamma(y) = 0 \iff \rho(u,xy)\rho(x,y) = 0, \text{ for all } u \in \mathcal{C} \text{ such that } (u,x) \in \mathcal{C}^{(2)}.
\end{align*}

It is enough to prove that $\rho(x,y) = 0$ implies that $\rho(u,xy) = 0$ for all suitable $u \in \mathcal{C}$. Assume that $\rho(x,y) = 0$ but $\rho(u,xy) \neq 0$ for some $u \in \mathcal{C}$ such that $(u,x) \in \mathcal{C}^{(2)}$ (if there is no such $u$, the result is proved). We have by (\ref{cat4}) that $\rho(r(u),uxy) \neq 0$, that is, $\rho(ux,y) \neq 0$. By (\ref{cat3}) it follows that $\rho(u,x) = \rho(r(u),ux) = 0$ and
\begin{align*}
    0 = \rho(r(u),ux)\rho(r(u)ux,y) = \rho(r(u),uxy)\rho(ux,y),
\end{align*}
that is, $\rho(r(u),uxy) = 0$ but $\rho(u,xy) \neq 0$, which is a contradiction with (\ref{cat4}).
\end{proof}

Consider two factor sets $\rho$ and $\sigma$. We say that  $\rho \sim \sigma$ if there is a map $\nu : \mathcal{C} \to \mathbb{K}^*$ such that
\begin{align*}
    \rho(x,y) = \nu(x)\nu(xy)^{-1}\nu(y)\sigma(x,y)
\end{align*}
whenever $(x,y) \in \mathcal{C}^{(2)}$. If $(x,y) \notin \mathcal{C}^{(2)}$, then $\rho(x,y) = \sigma(x,y) = 0$.

Defining the product of factor sets by pointwise multiplication, we have that $m(\mathcal{C})$, the set of all factor sets of $\mathcal{C}$, is a semigroup and $\sim$ is a congruence. The Schur multiplier $M(\mathcal{C})$ is then defined as the quotient semigroup $M(\mathcal{C}) = m(\mathcal{C})/\sim$.

By the same arguments used in \cite[Lemma 2]{dokuchaev2010partialprojectiverep}, it follows that the semigroups $m(\mathcal{C})$ and $M(\mathcal{C})$ are regular (every element has an inverse, not necessarily unique) and commutative. Hence, by Clifford's Theorem \cite[Corollary IV.2.2]{howie1976introduction}, they are strong semilattices of groups, in the sense that
\begin{align*}
    m(\mathcal{C}) = \bigcup_{\zeta \in b(\mathcal{C})} m_{\zeta}(\mathcal{C}), \quad M(\mathcal{C}) = \bigcup_{\zeta \in B(\mathcal{C})} M_{\zeta}(\mathcal{C}),
\end{align*}
where $b(\mathcal{C})$ and $B(\mathcal{C})$ are semilattices and $m_{\zeta}(\mathcal{C}), M_{\zeta}(\mathcal{C})$ are commutative groups.

This is the motivation to study the idempotents in $m(\mathcal{C})$.

\begin{prop}
    There is a one-to-one correspondence between the idempotents of $m(\mathcal{C})$ and the ideals of $\mathcal{C}$, given by $\varepsilon \longleftrightarrow \mathcal{I}_{\varepsilon}$, where
    \begin{align*}
        \varepsilon(x,y) = \begin{cases}
            0, \text{ if } (x,y) \in \mathcal{C}^{(2)} \text{ and } xy \in \mathcal{I}_\varepsilon, \\
            1, \text{ otherwise.}
        \end{cases}
    \end{align*}
    Moreover, for any idempotents $\varepsilon_1,\varepsilon_2 \in m(\mathcal{C})$, one has $\mathcal{I}_{\varepsilon_1\varepsilon_2} = \mathcal{I}_{\varepsilon_1} \cup \mathcal{I}_{\varepsilon_2}$.
\end{prop}
\begin{proof}
If $\varepsilon \in m(\mathcal{C})$ is idempotent, then $\varepsilon$ takes only the values $0$ and $1$. Hence, if $(x,y) \in \mathcal{C}^{(2)}$, we have $\varepsilon(x,y) = \varepsilon(r(x),xy)$. Moreover, if $(x,y,z) \in \mathcal{C}^{(3)}$, then
\begin{align*}
    \varepsilon(r(x),xy)\varepsilon(r(x),xyz) \overset{(*)}{=} \varepsilon(r(x),xyz)\varepsilon(r(y),yz).
\end{align*}

Define $\mathcal{I}_{\varepsilon} = \{ y \in \mathcal{C} : \varepsilon(r(y),y) = 0 \}$. Setting $x = r(y)$ in $(*)$, we obtain
\begin{align*}
    \varepsilon(r(y),y)\varepsilon(r(y),yz) = \varepsilon(r(y),yz)\varepsilon(r(y),yz) = \varepsilon(r(y),yz).
\end{align*} Therefore, if $y \in \mathcal{I}_{\varepsilon}$ and $yz \in \mathcal{C}^{(2)}$, then $\varepsilon(r(y),yz) = \varepsilon(r(yz),yz) = 0$, that is, $yz \in \mathcal{I}_{\varepsilon}$. 

On the other hand, setting $z = d(y)$ in $(*)$, we obtain $$\varepsilon(r(x),xy) = \varepsilon(r(x),xy)\varepsilon(r(y),y).$$ Hence, if $y \in \mathcal{I}_{\varepsilon}$ and $(x,y) \in \mathcal{C}^{(2)}$, then $xy \in \mathcal{I}_{\varepsilon}$, proving that $\mathcal{I}_{\varepsilon}$ is an ideal of $\mathcal{C}$.

Conversely, let $\mathcal{I}$ be an ideal of $\mathcal{C}$. Define
\begin{align*}
    \varepsilon(r(x),x) & = \begin{cases}
        1, \text{ if } x \notin \mathcal{I}, \\
        0, \text{ if } x \in \mathcal{I},
    \end{cases} \\
    \varepsilon(x,y) & = \begin{cases}
        \varepsilon(r(x),xy), \text{ if } xy \in \mathcal{C}^{(2)}, \\
        0, \text{ otherwise}.
    \end{cases}
\end{align*}

If $(x,y,z) \in \mathcal{C}^{(3)}$ is such that $xyz \in \mathcal{I}$, then $(*)$ holds. If $xyz \notin \mathcal{I}$, then neither $xy$ nor $yz$ belongs to $\mathcal{I}$, and in this case $(*)$ also holds.
\end{proof}

Denote by $Y(\mathcal{C})$ the semilattice of ideals of $\mathcal{C}$ with respect to inclusion, and note that we regard $\emptyset$ as an element of $Y(\mathcal{C})$. The next result is a direct consequence of the lemma above together with the fact that the relation $\sim$ separates idempotents.

\begin{corollary}
$b(\mathcal{C}) \cong B(\mathcal{C}) \cong Y(\mathcal{C})$ as semilattices.
\end{corollary}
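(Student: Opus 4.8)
The plan is to prove the two isomorphisms $b(\mathfrak{C}) \cong Y(\mathfrak{C})$ and $b(\mathfrak{C}) \cong B(\mathfrak{C})$ separately and then compose them, the first coming straight from the preceding proposition and the second from the fact that $\sim$ separates idempotents.

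For $b(\mathfrak{C}) \cong Y(\mathfrak{C})$, recall that $b(\mathfrak{C})$ is by definition the set of idempotents of $m(\mathfrak{C})$ equipped with the semigroup product, and that the preceding proposition already furnishes a bijection $\epsilon \mapsto \mathfrak{I}_{\epsilon}$ between this set and the set of ideals $Y(\mathfrak{C})$. First I would check that this bijection is a morphism of semilattices: the operation on $b(\mathfrak{C})$ is the idempotent product $\epsilon_1\epsilon_2$, the operation on $Y(\mathfrak{C})$ is union, and the proposition records exactly $\mathfrak{I}_{\epsilon_1\epsilon_2} = \mathfrak{I}_{\epsilon_1} \cup \mathfrak{I}_{\epsilon_2}$. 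I would also note that the factor set identically equal to $1$, which is the identity of $m(\mathfrak{C})$, corresponds under $\epsilon \mapsto \mathfrak{I}_{\epsilon}$ to the ideal $\emptyset$ (since $\mathfrak{I}_{\epsilon} = \{\, y : \epsilon(r(y),y) = 0 \,\}$ is empty precisely when $\epsilon \equiv 1$), so that identities match; this is why $\emptyset$ must be admitted into $Y(\mathfrak{C})$. A bijective semilattice homomorphism is an isomorphism, giving $b(\mathfrak{C}) \cong Y(\mathfrak{C})$.

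For $b(\mathfrak{C}) \cong B(\mathfrak{C})$, let $\pi : m(\mathfrak{C}) \to M(\mathfrak{C}) = m(\mathfrak{C})/\!\sim$ be the canonical projection, which is a surjective semigroup homomorphism. Since both $m(\mathfrak{C})$ and $M(\mathfrak{C})$ are strong semilattices of groups with underlying semilattices $b(\mathfrak{C})$ and $B(\mathfrak{C})$, the map $\pi$ sends idempotents to idempotents and restricts to a surjective semilattice homomorphism from $b(\mathfrak{C})$ onto $B(\mathfrak{C})$. The hypothesis that $\sim$ separates idempotents says exactly that $\pi$ is injective on $b(\mathfrak{C})$, so $\pi|_{b(\mathfrak{C})}$ is a semilattice isomorphism. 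Composing with the previous step yields $Y(\mathfrak{C}) \cong b(\mathfrak{C}) \cong B(\mathfrak{C})$.

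The only non-formal point, and the step I expect to be the main obstacle, is the surjectivity of $\pi|_{b(\mathfrak{C})}$ onto $B(\mathfrak{C})$, i.e. that every idempotent of $M(\mathfrak{C})$ is the image of an idempotent of $m(\mathfrak{C})$. I would settle this using the Clifford structure: given an idempotent $\bar{e} \in M(\mathfrak{C})$, lift it to some $s$ with $\pi(s) = \bar{e}$, and let $e$ be the identity of the group component of $m(\mathfrak{C})$ containing $s$; then $\pi(e)$ is the identity of the component of $\bar{e}$, and as $\bar{e}$ is itself idempotent it coincides with that identity, whence $\bar{e} = \pi(e)$. The remaining care is purely bookkeeping, namely reading off that the operation on $Y(\mathfrak{C})$ that is matched by the idempotent product is union, which the proposition makes explicit.
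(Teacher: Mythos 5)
Your argument is correct and is exactly the one the paper intends: it omits the proof, stating only that the corollary is a direct consequence of the preceding proposition together with the fact that $\sim$ separates idempotents, and your write-up simply supplies the details (the recorded identity $\mathfrak{I}_{\epsilon_1\epsilon_2} = \mathfrak{I}_{\epsilon_1}\cup\mathfrak{I}_{\epsilon_2}$ for $b(\mathfrak{C})\cong Y(\mathfrak{C})$, and injectivity plus Clifford-structure surjectivity of the projection on idempotents for $b(\mathfrak{C})\cong B(\mathfrak{C})$). No gaps; the lifting argument for idempotents of $M(\mathfrak{C})$ is sound since the image of a group component lands inside a single maximal subgroup of the quotient.
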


It follows that the components of $m(\mathcal{C})$ and $M(\mathcal{C})$ can be indexed by the elements of $Y(\mathcal{C})$. Let $\varepsilon_{\mathcal{I}}$ denote the identity element of $m_{\mathcal{I}}(\mathcal{C})$. Then
\begin{align*}
    \varepsilon_{\mathcal{I}}(x,y) = 0 \iff (x,y) \notin \mathcal{C}^{(2)} \text{ or } (x,y) \in \mathcal{C}^{(2)} \text{ and } xy \in \mathcal{I}.
\end{align*}

Since $\rho \rho^{-1} = \varepsilon_{\mathcal{I}}$ for some $\mathcal{I} \in Y(\mathcal{C})$, we obtain the following result.

\begin{corollary}
The group $m_{\mathcal{I}}(\mathcal{C})$ consists precisely of the factor sets $\rho$ such that 
\begin{align*}
    \rho(x,y) = 0 \iff (x,y) \notin \mathcal{C}^{(2)} \text{ or } (x,y) \in \mathcal{C}^{(2)} \text{ and } xy \in \mathcal{I}.
\end{align*}
\end{corollary}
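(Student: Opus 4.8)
The plan is to read everything off the Clifford decomposition already established, using the explicit formula for the idempotent $\epsilon_{\mathfrak{I}}$ that was computed just above the statement. The starting point is the remark preceding the corollary: every factor set $\rho$ lies in exactly one group component $m_{\mathfrak{I}}(\mathfrak{C})$, and inside that component the identity is $\epsilon_{\mathfrak{I}}$, so that $\rho\rho^{-1}=\epsilon_{\mathfrak{I}}$ (here $\rho^{-1}$ is the inverse of $\rho$ in its group). Because multiplication in $m(\mathfrak{C})$ is pointwise multiplication in $\mathbb{K}$, both $\rho\epsilon_{\mathfrak{I}}=\rho$ and $\rho\rho^{-1}=\epsilon_{\mathfrak{I}}$ are identities between $\mathbb{K}$-valued functions on $\mathfrak{C}\times\mathfrak{C}$, and this is precisely what links the zero set of $\rho$ to that of $\epsilon_{\mathfrak{I}}$.

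First I would establish that membership in $m_{\mathfrak{I}}(\mathfrak{C})$ forces the stated zero set. Assume $\rho\in m_{\mathfrak{I}}(\mathfrak{C})$. Evaluating $\rho=\rho\epsilon_{\mathfrak{I}}$ at a pair $(x,y)$ with $\epsilon_{\mathfrak{I}}(x,y)=0$ gives $\rho(x,y)=\rho(x,y)\cdot 0=0$; evaluating $\rho\rho^{-1}=\epsilon_{\mathfrak{I}}$ at a pair with $\epsilon_{\mathfrak{I}}(x,y)=1$ gives $\rho(x,y)\rho^{-1}(x,y)=1$, so $\rho(x,y)\neq 0$ since $\mathbb{K}$ is a field. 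Combining the two, $\rho(x,y)=0\iff\epsilon_{\mathfrak{I}}(x,y)=0$, and then the already-recorded description $\epsilon_{\mathfrak{I}}(x,y)=0\iff\nexists xy$ or ($\exists xy$ and $xy\in\mathfrak{I}$) transports verbatim to $\rho$.

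For the converse I would argue that the prescribed zero set determines the component. Let $\rho$ be a factor set whose zeros are exactly the pairs with $\nexists xy$ or $xy\in\mathfrak{I}$, and let $m_{\mathfrak{J}}(\mathfrak{C})$ be the unique component containing it. Applying the forward direction to $\mathfrak{J}$, the zeros of $\rho$ are also exactly the pairs with $\nexists xy$ or $xy\in\mathfrak{J}$. Testing on the pairs $(r(y),y)$, for which $r(y)y=y$ always exists, we read off $y\in\mathfrak{I}\iff\rho(r(y),y)=0\iff y\in\mathfrak{J}$ for every $y\in\mathfrak{C}$, whence $\mathfrak{I}=\mathfrak{J}$ and $\rho\in m_{\mathfrak{I}}(\mathfrak{C})$.

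The argument is essentially bookkeeping once the strong-semilattice-of-groups structure is in place; the only step that genuinely requires the field hypothesis on $\mathbb{K}$ is the passage from $\rho\rho^{-1}=\epsilon_{\mathfrak{I}}$ to the pointwise conclusion that $\rho$ and $\epsilon_{\mathfrak{I}}$ vanish on exactly the same pairs, where we use both that nonzero elements are invertible and that there are no zero divisors.
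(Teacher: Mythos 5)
Your argument is correct and follows the same route the paper intends: the paper simply asserts the corollary from $\rho\rho^{-1}=\epsilon_{\mathfrak{I}}$ together with the displayed description of $\epsilon_{\mathfrak{I}}$, and you have merely filled in the pointwise bookkeeping (including the converse via testing on pairs $(r(y),y)$, which the paper leaves implicit). No gaps.
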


From now on, $\G$ will denote a groupoid. The next three results follow straightforwardly from the same arguments used in \cite[Section~4]{dokuchaev2010partialprojectiverep} and \cite{lata2020inverse}, and their proofs are therefore omitted.

\begin{prop}
A map $\Gamma :\G \to M$ is a partial projective representation if and only if $\Gamma = \overline{\Gamma} f$ for some projective representation $\overline{\Gamma} : \mathcal{E}(\G) \to M$.
\end{prop}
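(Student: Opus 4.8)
The plan is to reduce everything to the universal property of Exel's inverse category $\mathcal{E}(\G)$ together with the passage to the quotient $\mathcal{P}(S)$. Recall that, by definition, $\Gamma$ is a partial projective representation precisely when $\xi\Gamma : \G \to \mathcal{P}(S)$ is a partial homomorphism, and $\overline{\Gamma}$ is a projective representation precisely when $\xi\overline{\Gamma} : \mathcal{E}(\G) \to \mathcal{P}(S)$ is a semigroupoid homomorphism. Thus both notions are governed by (partial) homomorphisms into the semigroup $\mathcal{P}(S)$, and the key input is that $f : \G \to \mathcal{E}(\G)$, $x \mapsto [x]$, is the universal partial homomorphism: every partial homomorphism $\psi : \G \to \mathfrak{D}$ into a semigroupoid $\mathfrak{D}$ factors as $\psi = \hat{\psi}\, f$ for a unique semigroupoid homomorphism $\hat{\psi} : \mathcal{E}(\G) \to \mathfrak{D}$, obtained by setting $\hat{\psi}([x]) = \psi(x)$ and extending multiplicatively, as established in \citep{lata2020inverse}.

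For the easy implication, suppose $\Gamma = \overline{\Gamma}\, f$ with $\overline{\Gamma}$ a projective representation. Then $\xi\Gamma = (\xi\overline{\Gamma})\, f$, and I claim this is a partial homomorphism. Indeed, $\xi\overline{\Gamma}$ is a semigroupoid homomorphism, so it preserves products; applying it to the defining relations of $\mathcal{E}(\G)$, namely $[x^{-1}][x][y] = [x^{-1}][xy]$ and $[x][y][y^{-1}] = [xy][y^{-1}]$, and using $\xi\overline{\Gamma}([x]) = (\xi\Gamma)(x)$, yields exactly the two identities required of a partial homomorphism for $\xi\Gamma$; the existence clause $\exists (\xi\Gamma)(x)(\xi\Gamma)(y)$ is automatic because $\mathcal{P}(S)$ is a semigroup. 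Hence $\xi\Gamma$ is a partial homomorphism and $\Gamma$ is a partial projective representation.

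For the converse, assume $\Gamma$ is a partial projective representation, so that $\psi := \xi\Gamma : \G \to \mathcal{P}(S)$ is a partial homomorphism. By the universal property there is a unique semigroupoid homomorphism $\Phi : \mathcal{E}(\G) \to \mathcal{P}(S)$ with $\Phi\, f = \psi$. It remains to lift $\Phi$ along $\xi$ to an $S$-valued map $\overline{\Gamma}$ that still restricts to $\Gamma$ on the generators. Fixing a set-theoretic section $\xi' : \mathcal{P}(S) \to S$ of $\xi$, I would define $\overline{\Gamma}(w) = \Gamma(x)$ whenever $w = [x]$ lies in the image of $f$, which is unambiguous because $f$ is injective, and $\overline{\Gamma}(w) = \xi'(\Phi(w))$ otherwise. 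Then $\xi\overline{\Gamma}([x]) = \xi\Gamma(x) = \Phi([x])$ on generators and $\xi\overline{\Gamma}(w) = \Phi(w)$ elsewhere, so $\xi\overline{\Gamma} = \Phi$ is a semigroupoid homomorphism; thus $\overline{\Gamma}$ is a projective representation, and by construction $\overline{\Gamma}\, f = \Gamma$.

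The only delicate point is this last step: one must produce an $S$-valued lift $\overline{\Gamma}$ that is simultaneously a valid projective representation, i.e.\ with $\xi\overline{\Gamma}$ a genuine homomorphism, and that agrees with $\Gamma$ after composition with $f$. This is exactly why the value of $\overline{\Gamma}$ on the generators is \emph{forced} to be $\Gamma(x)$ rather than an arbitrary representative, and why the injectivity of $f$ is needed for the definition to be consistent; on all remaining elements the choice is immaterial, since only the class $\Phi(w) \in \mathcal{P}(S)$ enters the homomorphism condition. Everything else is a formal consequence of the universal property of $\mathcal{E}(\G)$ and of the fact, already used above, that composing a semigroupoid homomorphism with the partial homomorphism $f$ yields a partial homomorphism.
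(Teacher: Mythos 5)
Your overall strategy---reduce both notions to (partial) homomorphisms into $\mathcal{P}(S)$, invoke the universal property of $f:\G\to\mathcal{E}(\G)$, and lift along $\xi$ using a section $\xi'$---is the natural reconstruction of the argument the paper points to (the proof is omitted there, with a reference to the group case and to \citep{lata2020inverse}). The easy implication is fine, and so is your lifting step: the map $\overline{\Gamma}$ you build does satisfy $\xi\overline{\Gamma}=\Phi$ and $\overline{\Gamma}f=\Gamma$, and injectivity of $f$, which follows from the uniqueness of the standard form in $\mathcal{E}(\G)$, makes it well defined.

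The gap is in the universal property as you state it. The presentation of $\mathcal{E}(\G)$ includes the relation $[r(x)][x]=[x]=[x][d(x)]$, so any map factoring through $f$ must satisfy $\psi(r(x))\psi(x)=\psi(x)=\psi(x)\psi(d(x))$. That identity is \emph{not} part of the paper's definition of a partial homomorphism, and it does not follow from the two displayed identities: setting $y=d(x)$ in the first only gives $\varphi(x^{-1})\varphi(x)\varphi(d(x))=\varphi(x^{-1})\varphi(x)$, and setting $x=r(y)$ in the second only gives $\varphi(r(y))\varphi(y)\varphi(y^{-1})=\varphi(y)\varphi(y^{-1})$; neither cancels down to the unit identity. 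Concretely, take $\G=\{e\}$ and $\Gamma(e)$ a nonzero square-zero matrix in $\mathcal{M}_2(\mathbb{K})$: then $\xi\Gamma$ satisfies both partial-homomorphism identities (all triple and double products are $\bar{0}$), yet $\xi\Gamma(e)^{2}=\bar{0}\neq\xi\Gamma(e)$, so $\xi\Gamma$ cannot factor through $\mathcal{E}(\{e\})$, where $[e]^{2}=[e]$. Hence in the converse direction you must either restrict to partial projective representations satisfying $\xi\Gamma(r(x))\,\xi\Gamma(x)=\xi\Gamma(x)=\xi\Gamma(x)\,\xi\Gamma(d(x))$---precisely the normalization the paper imposes as a standing assumption a few paragraphs later---or verify this condition, which is impossible in general. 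State the universal property for partial homomorphisms that also preserve these unit relations and check that $\xi\Gamma$ does; with that correction the rest of your argument goes through verbatim.
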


\begin{theorem} \label{teocondreppargrp}
Let $M$ be a $\mathbb{K}$-cancellative monoid. A map $\Gamma :\G \to M$ is a partial projective representation of $\G$ if and only if
\begin{enumerate}
    \item[(i)] For all $(g,h) \in \G \times\G$, if $(g,h) \in \G^{(2)}$,
    \begin{align*}
        \Gamma(g^{-1})\Gamma(gh) = 0 \iff \Gamma(g)\Gamma(h) = 0 \iff \Gamma(gh)\Gamma(h^{-1}) = 0,
    \end{align*}
    and if $(g,h) \notin \G^{(2)}$, $\Gamma(g)\Gamma(h) = 0$.
    
    \item[(ii)] There is a unique partially defined map $\sigma :\G \times\G \to \mathbb{K}^*$ such that
    \begin{align*}
        \text{\emph{dom}} \sigma = \{ (g,h) \in \G^{(2)} : \Gamma(g)\Gamma(h) \neq 0 \}
    \end{align*}
    and, for all $(g,h) \in \text{\emph{dom}} \sigma$,
    \begin{align*}
        \Gamma(g^{-1})\Gamma(g)\Gamma(h) & = \Gamma(g^{-1})\Gamma(gh)\sigma(g,h), \\
        \Gamma(g)\Gamma(h)\Gamma(h^{-1}) & = \Gamma(gh)\Gamma(h^{-1})\sigma(g,h).
    \end{align*}
\end{enumerate}

Besides that, for all $(g,h,k) \in \G^{(3)}$,
\begin{align*}
    \Gamma(g)\Gamma(h)\Gamma(k) \neq 0 \implies \sigma(g,h)\sigma(gh,k) = \sigma(g,hk)\sigma(h,k).
\end{align*}
\end{theorem}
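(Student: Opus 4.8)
The plan is to reduce the whole statement to the defining property that $\xi\Gamma\colon\G\to\mathcal{P}(S)$ is a partial homomorphism, and then to translate its two defining identities back and forth between $\mathcal{P}(S)$ and $S$ using $\mathbb{K}$-cancellativity. The dictionary I would set up first: for $a,b\in S$ one has $\xi(a)=\xi(b)$ in $\mathcal{P}(S)$ exactly when $a=b=0$ or $a=\kappa b$ for a unique $\kappa\in\mathbb{K}^{*}$; this is precisely the mechanism that manufactures the scalars $\sigma(x,y)$ and, via $\mathbb{K}$-cancellativity, makes them unique. The two partial-homomorphism equalities for $\xi\Gamma$ when $\exists xy$, namely $\xi\Gamma(x^{-1})\xi\Gamma(x)\xi\Gamma(y)=\xi\Gamma(x^{-1})\xi\Gamma(xy)$ and $\xi\Gamma(x)\xi\Gamma(y)\xi\Gamma(y^{-1})=\xi\Gamma(xy)\xi\Gamma(y^{-1})$, then say exactly that the two displayed relations of (ii) hold with a common scalar on $\{\Gamma(x)\Gamma(y)\neq0\}$ and that both sides vanish simultaneously off it. For the clause of (i) about $\nexists xy$, I would invoke the preceding Proposition to write $\Gamma=\overline{\Gamma}f$ for a projective representation $\overline{\Gamma}$ of $\mathcal{E}(\G)$: since $\exists[x][y]\iff\exists xy$, the case $\nexists xy$ forces $\overline{\Gamma}([x])\overline{\Gamma}([y])=0$ by Definition \ref{defrepprojcat}(i), i.e. $\Gamma(x)\Gamma(y)=0$.

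The engine for condition (i) in the case $\exists xy$ is a recovery lemma: if $\Gamma(x)\neq0$ then $\Gamma(x)\Gamma(x^{-1})\Gamma(x)=\mu\,\Gamma(x)$ for some $\mu\in\mathbb{K}^{*}$. I would obtain this by factoring $\Gamma=\overline{\Gamma}f$ through $\mathcal{E}(\G)$, where the relation $[r(x)][x]=[x]$ gives $\xi\Gamma(r(x))\xi\Gamma(x)=\xi\Gamma(x)$, and combining it with the second partial-homomorphism identity at $(x,x^{-1})$, which yields $\xi\Gamma(x)\xi\Gamma(x^{-1})\xi\Gamma(x)=\xi\Gamma(xx^{-1})\xi\Gamma(x)=\xi\Gamma(r(x))\xi\Gamma(x)=\xi\Gamma(x)$; lifting to $S$ under $\Gamma(x)\neq0$ produces $\mu$. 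With this the three-way equivalence in (i) is immediate: $\Gamma(x)\Gamma(y)=0\Rightarrow\Gamma(x^{-1})\Gamma(x)\Gamma(y)=0$ is trivial, and conversely left-multiplying $\Gamma(x^{-1})\Gamma(x)\Gamma(y)=0$ by $\Gamma(x)$ and regrouping gives $\mu\,\Gamma(x)\Gamma(y)=0$, hence $\Gamma(x)\Gamma(y)=0$; the symmetric lemma $\Gamma(y)\Gamma(y^{-1})\Gamma(y)=\nu\,\Gamma(y)$ handles the $\Gamma(xy)\Gamma(y^{-1})$ end. Chaining these with the two partial-homomorphism identities (which already give $\Gamma(x^{-1})\Gamma(x)\Gamma(y)=0\iff\Gamma(x^{-1})\Gamma(xy)=0$ and its mirror) closes the equivalences of (i). The converse is then a direct check: assuming (i) and (ii), each partial-homomorphism equality holds in $\mathcal{P}(S)$ because on $\mathrm{dom}\,\sigma$ the two sides differ by $\sigma(x,y)\in\mathbb{K}^{*}$, while off $\mathrm{dom}\,\sigma$ condition (i) forces both sides to vanish; hence $\xi\Gamma$ is a partial homomorphism.

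For the cocycle implication I would fix $(x,y,z)\in\G^{3}$ with $\Gamma(x)\Gamma(y)\Gamma(z)\neq0$ and evaluate the doubly sandwiched product $\Gamma(x^{-1})\Gamma(x)\Gamma(y)\Gamma(z)\Gamma(z^{-1})$ in two ways. Collapsing from the left (the first relation of (ii) on $(x,y)$, then the second relation on $(xy,z)$ in the presence of the suffix $\Gamma(z^{-1})$) produces $\sigma(x,y)\sigma(xy,z)$ times $\Gamma(x^{-1})\Gamma(xyz)\Gamma(z^{-1})$, while collapsing from the right (the second relation on $(y,z)$, then the first on $(x,yz)$) produces $\sigma(x,yz)\sigma(y,z)$ times the same element. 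The recovery lemmas, applied on both ends, show $\Gamma(x^{-1})\Gamma(xyz)\Gamma(z^{-1})\neq0$ whenever $\Gamma(x)\Gamma(y)\Gamma(z)\neq0$, so $\mathbb{K}$-cancellativity cancels it and leaves $\sigma(x,y)\sigma(xy,z)=\sigma(x,yz)\sigma(y,z)$. I expect the main obstacle to be exactly this bookkeeping of non-vanishing: ensuring that every passage from $\mathcal{P}(S)$ to $S$ yields a genuine element of $\mathbb{K}^{*}$ rather than $0$, which is where condition (i), the recovery lemma, and $\mathbb{K}$-cancellativity must be used in tandem, and where the groupoid identities $x^{-1}x=d(x)$, $xx^{-1}=r(x)$ enter to align the relevant composites.
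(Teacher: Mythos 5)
The paper does not actually prove this theorem: it declares that it ``follows straightforward by the same arguments used in'' the group case and omits the proof, so the only meaningful comparison is with that standard argument, which is essentially what you reproduce. Your overall plan is sound: the dictionary between equality in $\mathcal{P}(S)$ and equality up to a unique scalar in $\mathbb{K}^{*}$, the recovery lemma $\Gamma(x)\Gamma(x^{-1})\Gamma(x)=\mu\,\Gamma(x)$ obtained by passing through $\mathcal{E}(\G)$, the use of the factorization $\Gamma=\overline{\Gamma}f$ to handle the $\nexists xy$ clause (which is indeed not visible from the bare partial-homomorphism identities), and the double-sandwich computation of $\Gamma(x^{-1})\Gamma(x)\Gamma(y)\Gamma(z)\Gamma(z^{-1})$ for the $2$-cocycle identity are all the right ingredients, and your non-vanishing bookkeeping for that last step is correct.

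The one place where you assert more than the translation gives you is the claim that the two partial-homomorphism identities ``say exactly that the two displayed relations of (ii) hold with a common scalar.'' They do not: read in $S$, each identity produces its own scalar, say $\Gamma(x^{-1})\Gamma(x)\Gamma(y)=\sigma_{1}(x,y)\,\Gamma(x^{-1})\Gamma(xy)$ and $\Gamma(x)\Gamma(y)\Gamma(y^{-1})=\sigma_{2}(x,y)\,\Gamma(xy)\Gamma(y^{-1})$, and the content of (ii) is precisely that one may take $\sigma_{1}=\sigma_{2}$. This is not automatic; it needs the single-sandwich version of your cocycle argument: evaluate $\Gamma(x^{-1})\Gamma(x)\Gamma(y)\Gamma(y^{-1})$ by collapsing on the left and on the right to obtain $\sigma_{1}\,\Gamma(x^{-1})\Gamma(xy)\Gamma(y^{-1})=\sigma_{2}\,\Gamma(x^{-1})\Gamma(xy)\Gamma(y^{-1})$, and then verify that $\Gamma(x^{-1})\Gamma(xy)\Gamma(y^{-1})\neq 0$ whenever $\Gamma(x)\Gamma(y)\neq 0$ (multiply by $\Gamma(x)$ on the left and $\Gamma(y)$ on the right, and use the recovery lemmas together with $\xi\Gamma(x)\xi\Gamma(x^{-1})\xi\Gamma(xy)=\xi\Gamma(x)\xi\Gamma(y)$) before cancelling by $\mathbb{K}$-cancellativity. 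Since this is exactly the technique you already deploy for the cocycle equality, the omission is easily repaired, but as written this step is missing, and the existence of a \emph{single} $\sigma$ satisfying both relations of (ii) is the heart of the statement rather than a formal consequence of the dictionary.
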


\begin{prop}
A partially defined map $\sigma :\G \times\G \to \mathbb{K}^*$ is a factor set for some partial projective representation of $\G$ if and only if there is a factor set $\rho$ of $\mathcal{E}(\G)$ such that
\begin{enumerate}
    \item[(i)] For all $(g,h) \in \G \times\G$, $(g,h) \in \text{\emph{dom}} \sigma \iff ([g],[h]) \in \text{\emph{dom}} \rho$;
    
    \item[(ii)] For all $(g,h) \in \text{\emph{dom}} \sigma$, 
    \begin{align*}
        \sigma(g,h) = \frac{\rho([g],[h])\rho([g^{-1}],[g][h])}{\rho([g^{-1}],[gh])}.
    \end{align*}
\end{enumerate}
\end{prop}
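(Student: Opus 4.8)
The plan is to reduce everything to the correspondence $\Gamma = \overline{\Gamma} f$ established in the preceding Proposition, which identifies partial projective representations of $\G$ with genuine projective representations of the Exel inverse category $\mathcal{E}(\G)$ precomposed with the inclusion $f\colon \G \to \mathcal{E}(\G)$, $x \mapsto [x]$. Conditions (i) and (ii) will then emerge from the defining relation $[x^{-1}][x][y] = [x^{-1}][xy]$ of $\mathcal{E}(\G)$ together with the characterization of $\sigma$ recorded in Theorem \ref{teocondreppargrp}. This mirrors the arguments of \citep{dokuchaev2010partialprojectiverep} and \citep{lata2020inverse}.

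For the forward implication, suppose $\sigma$ is the factor set of a partial projective representation $\Gamma\colon \G \to S$. First I would invoke the preceding Proposition to obtain a projective representation $\overline{\Gamma}\colon \mathcal{E}(\G) \to S$ with $\Gamma = \overline{\Gamma} f$, and let $\rho$ be its factor set in the sense of Definition \ref{defrepprojcat}. Condition (i) is immediate: since $\Gamma(x)\Gamma(y) = \overline{\Gamma}([x])\overline{\Gamma}([y])$, we have $(x,y) \in \text{dom}\,\sigma$ iff $\Gamma(x)\Gamma(y) \neq 0$ iff $([x],[y]) \in \text{dom}\,\rho$. For condition (ii) the key step is to expand $\Gamma(x^{-1})\Gamma(x)\Gamma(y)$ by applying the factor set $\rho$ twice and then the relation $[x^{-1}][x][y] = [x^{-1}][xy]$, obtaining
\begin{align*}
    \Gamma(x^{-1})\Gamma(x)\Gamma(y) = \overline{\Gamma}([x^{-1}][xy])\,\rho([x^{-1}],[x][y])\,\rho([x],[y]),
\end{align*}
while on the other hand $\Gamma(x^{-1})\Gamma(xy) = \overline{\Gamma}([x^{-1}][xy])\,\rho([x^{-1}],[xy])$. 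Comparing with the defining identity $\Gamma(x^{-1})\Gamma(x)\Gamma(y) = \Gamma(x^{-1})\Gamma(xy)\,\sigma(x,y)$ of Theorem \ref{teocondreppargrp}, cancelling the common nonzero factor $\overline{\Gamma}([x^{-1}][xy])$ by $\mathbb{K}$-cancellativity, and dividing by $\rho([x^{-1}],[xy]) \in \mathbb{K}^*$ yields
\begin{align*}
    \sigma(x,y) = \frac{\rho([x],[y])\,\rho([x^{-1}],[x][y])}{\rho([x^{-1}],[xy])}.
\end{align*}

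For the converse, suppose $\rho$ is a factor set of $\mathcal{E}(\G)$ satisfying (i) and (ii). By definition $\rho$ arises from some projective representation $\overline{\Gamma}\colon \mathcal{E}(\G) \to S$; I would set $\Gamma = \overline{\Gamma} f$, which is a partial projective representation of $\G$ by the preceding Proposition. By Theorem \ref{teocondreppargrp} it carries a well-defined factor set whose domain is $\{(x,y) : \Gamma(x)\Gamma(y) \neq 0\} = \{(x,y) : ([x],[y]) \in \text{dom}\,\rho\}$, which equals $\text{dom}\,\sigma$ by (i). Rerunning the forward computation shows that this factor set is given precisely by the right-hand side of (ii), hence coincides with $\sigma$; therefore $\sigma$ is the factor set of the partial projective representation $\Gamma$.

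The main obstacle is the bookkeeping in the forward computation: one must apply the inverse-category relations of $\mathcal{E}(\G)$ in the correct order and, crucially, verify at each stage that every $\rho$-value appearing lies in $\mathbb{K}^*$ (equivalently, that the relevant products in $S$ are nonzero), so that both the $\mathbb{K}$-cancellation and the final division are legitimate. All of these nonvanishing conditions trace back, through the equivalences of Theorem \ref{teocondreppargrp}, to the single hypothesis $(x,y) \in \text{dom}\,\sigma$, which guarantees in particular that $\Gamma(x^{-1})\Gamma(xy) \neq 0$ and hence that $\overline{\Gamma}([x^{-1}][xy]) \neq 0$.
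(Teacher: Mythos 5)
Your argument is correct and is essentially the intended one: the paper omits this proof, stating that it follows by the same arguments as in \cite[Section 4]{dokuchaev2010partialprojectiverep} and \citep{lata2020inverse}, namely factoring $\Gamma = \overline{\Gamma}f$ through $\mathcal{E}(\G)$, using the relation $[x^{-1}][x][y]=[x^{-1}][xy]$ to expand $\Gamma(x^{-1})\Gamma(x)\Gamma(y)$ in two ways, and cancelling the nonzero element $\overline{\Gamma}([x^{-1}][xy])$ by $\mathbb{K}$-cancellativity. Your tracking of the nonvanishing conditions back to $(x,y)\in\mathrm{dom}\,\sigma$ via Theorem \ref{teocondreppargrp}(i) is exactly the point that makes the division by $\rho([x^{-1}],[xy])$ legitimate, so nothing is missing.
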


The proposition above shows that the factor sets of a groupoid $\G$ form a regular, commutative semigroup under pointwise multiplication, which we denote by $pm(\G)$. One may also define the partial Schur multiplier by $pM(\G) = pm(\G)/\sim$. By Clifford's Theorem, these semigroups are isomorphic to strong semilattices of abelian groups. For this reason, it is important to study the idempotents of $pm(\G)$.

Let $\sigma$ be a factor set of $\G$ and let $D = \text{dom} (\sigma)$. The next result describes basic properties of the domain of factor sets. Its proof is omitted, since it follows the same arguments (with minor adaptations to the groupoid setting) as in \cite[Proposition~4]{dokuchaev2010partialprojectiverep}.

\begin{prop}\label{prop414} If $(g,h) \in \G \times \G$,
\begin{align*}
    (g,h) \in D \iff (g^{-1},gh) \in D & \iff (gh,h^{-1}) \in D \iff (h,h^{-1}g^{-1}) \in D \\
    \iff (h^{-1},g^{-1}) \in D & \iff (h^{-1}g^{-1},g) \in D,
\end{align*}
and
\begin{align*}
    (g,d(g)) \in D \iff (g^{-1},g) \in D \iff (d(g), g^{-1}) \in D \iff (r(g),g) \in D.
\end{align*}    
\end{prop}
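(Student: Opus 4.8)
The plan is to read everything off the description of $D=\mathrm{dom}\,\sigma=\{(x,y)\in\G^2:\Gamma(x)\Gamma(y)\neq 0\}$ supplied by Theorem \ref{teocondreppargrp}. Its part (i) says that whenever $\exists xy$ one has $\Gamma(x)\Gamma(y)=0\iff\Gamma(x^{-1})\Gamma(xy)=0\iff\Gamma(xy)\Gamma(y^{-1})=0$; phrased in terms of membership in $D$ this furnishes two bidirectional ``moves'', each valid whenever $\exists xy$:
\[
\text{(A)}\quad (x,y)\in D\iff (x^{-1},xy)\in D,\qquad
\text{(B)}\quad (x,y)\in D\iff (xy,y^{-1})\in D .
\]
First I would record that both target pairs again lie in $\G^2$: for (A), $d(x^{-1})=r(x)=r(xy)$, and for (B), $d(xy)=d(y)=r(y^{-1})$, so iterating the moves stays inside $\G^2$.

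For the first chain I would simply compose (A) and (B), simplifying products with $d$ and $r$. The links $(x,y)\overset{\text{(A)}}{\iff}(x^{-1},xy)$ and $(x,y)\overset{\text{(B)}}{\iff}(xy,y^{-1})$ are immediate. Applying (B) to $(x^{-1},xy)$, with $x^{-1}(xy)=d(x)y=y$ and $(xy)^{-1}=y^{-1}x^{-1}$, gives $(x^{-1},xy)\iff(y,y^{-1}x^{-1})$; applying (A) to $(y,y^{-1}x^{-1})$, with $y(y^{-1}x^{-1})=r(y)x^{-1}=x^{-1}$ (using $r(y)=d(x)=r(x^{-1})$), gives $(y,y^{-1}x^{-1})\iff(y^{-1},x^{-1})$; and (B) applied to $(y^{-1},x^{-1})$ gives $(y^{-1},x^{-1})\iff(y^{-1}x^{-1},x)$. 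By transitivity all six pairs in the first display are simultaneously in or out of $D$.

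For the second chain, (A) with $y=d(x)$ gives $(x,d(x))\iff(x^{-1},x)$, and (B) applied to $(x^{-1},x)$ (where $x^{-1}x=d(x)$) gives $(x^{-1},x)\iff(d(x),x^{-1})$. The remaining pair $(r(x),x)$ is, however, \emph{not} reachable from these by (A), (B) alone: the two moves only connect it to $(x,x^{-1})$ (indeed $(r(x),x)\overset{\text{(B)}}{\iff}(x,x^{-1})$), exactly as in the group case, so a new ingredient is needed. Here I would use the factorization $\Gamma=\overline{\Gamma}f$ of a partial projective representation through a projective representation $\overline{\Gamma}:\mathcal{E}(\G)\to S$, so that $\Gamma(x)=\overline{\Gamma}([x])$ and $\Gamma(r(x))=\overline{\Gamma}([r(x)])$, $\Gamma(d(x))=\overline{\Gamma}([d(x)])$. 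Since $\overline{\Gamma}$ is a projective representation, Definition \ref{defrepprojcat}(i) gives $\overline{\Gamma}(a)\overline{\Gamma}(b)=0\iff\overline{\Gamma}(ab)=0$ whenever $\exists ab$; combining this with the defining relations $[r(x)][x]=[x]=[x][d(x)]$ of $\mathcal{E}(\G)$ yields
\[
(r(x),x)\in D\iff \overline{\Gamma}([r(x)])\overline{\Gamma}([x])\neq 0\iff\overline{\Gamma}([x])\neq 0\iff\Gamma(x)\neq 0,
\]
and symmetrically $(x,d(x))\in D\iff\Gamma(x)\neq 0$. Hence $(r(x),x)\in D\iff(x,d(x))\in D$, the bridge that closes the second chain.

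The routine part is the first chain: once (A) and (B) are isolated it is pure bookkeeping, the only care being to check each product is defined and to simplify via $d,r$. The genuine obstacle is the unit pair $(r(x),x)$, because the domain-moves (A), (B) preserve an orbit that does not contain it; the decisive observation is that passing to $\overline{\Gamma}$ on $\mathcal{E}(\G)$ and invoking the zero-product criterion for projective representations identifies both $(r(x),x)\in D$ and $(x,d(x))\in D$ with the single condition $\Gamma(x)\neq 0$, which is what allows the two orbits to be merged.
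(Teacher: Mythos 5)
Your proof is correct. The paper itself omits the proof of this proposition, deferring to the group case, so there is no written argument to compare against; but your reduction to the two ``moves'' (A) and (B) extracted from Theorem \ref{teocondreppargrp}(i), followed by the orbit computation, is exactly the expected adaptation, and the composability checks and simplifications via $d$ and $r$ are all right. The one genuinely non-routine point is the one you flag: $(r(x),x)$ does not lie in the $(A),(B)$-orbit of $(x,d(x))$ (their orbits are $\{(x,d(x)),(x^{-1},x),(d(x),x^{-1})\}$ and $\{(r(x),x),(x,x^{-1}),(x^{-1},r(x))\}$, which are disjoint), so a bridge is needed even in the group case. Your bridge --- factoring $\Gamma=\overline{\Gamma}f$ through $\mathcal{E}(\G)$ and using the relation $[r(x)][x]=[x]=[x][d(x)]$ together with the zero-product criterion of Definition \ref{defrepprojcat}(i) to identify both $(r(x),x)\in D$ and $(x,d(x))\in D$ with the single condition $\Gamma(x)\neq 0$ --- is legitimate given the factorization proposition stated (proof omitted) earlier in the section, and it is arguably cleaner than re-deriving the corresponding identity from the partial-homomorphism axioms directly. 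No gaps.
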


From now on, we assume that $\Gamma(r(g))$ and $\Gamma(d(g))$ act as left and right identities for $\Gamma(g)$, respectively (not necessarily equal to $1_M$). In particular, this implies that $\sigma(e,e) = 1$ for all $e \in \G_0$.

\begin{prop}
If $(g,h) \in D$, then $(g,d(g)),(r(h),h) \in D$ and $$\sigma(g,d(g)) = \sigma(r(g),g) = \sigma(h,d(h)) = \sigma(r(h),h) = 1.$$
\end{prop}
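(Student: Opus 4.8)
The plan is to reduce everything to the single fact that $(x,y)\in D$ forces $\Gamma(x)\neq 0$ and $\Gamma(y)\neq 0$, and then to combine the normalization hypothesis $\Gamma(r(z))\Gamma(z)=\Gamma(z)=\Gamma(z)\Gamma(d(z))$ with the defining relations of the factor set from Theorem \ref{teocondreppargrp}. First I would note that since $D=\mathrm{dom}\,\sigma=\{(x,y)\in\G^2:\Gamma(x)\Gamma(y)\neq 0\}$, the hypothesis $(x,y)\in D$ gives at once $\Gamma(x)\neq 0$ and $\Gamma(y)\neq 0$. Using the normalization, $\Gamma(x)\Gamma(d(x))=\Gamma(x)\neq 0$ and $\Gamma(r(x))\Gamma(x)=\Gamma(x)\neq 0$, so both $(x,d(x))$ and $(r(x),x)$ lie in $D$; the same computation with $y$ places $(y,d(y))$ and $(r(y),y)$ in $D$. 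This already secures the membership assertions of the statement (one could alternatively invoke the equivalences of Proposition \ref{prop414}).

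Next I would record the nonvanishing facts needed to justify cancellation. Applying Theorem \ref{teocondreppargrp}(i) to the pair $(x,d(x))$, for which the product $x\,d(x)$ exists, yields $\Gamma(x^{-1})\Gamma(x)=0\iff\Gamma(x)\Gamma(d(x))=0$; applying it to $(r(x),x)$ yields $\Gamma(x)\Gamma(x^{-1})=0\iff\Gamma(r(x))\Gamma(x)=0$. Since both right-hand sides equal $\Gamma(x)\neq 0$, I conclude $\Gamma(x^{-1})\Gamma(x)\neq 0$ and $\Gamma(x)\Gamma(x^{-1})\neq 0$, and symmetrically $\Gamma(y^{-1})\Gamma(y)\neq 0$ and $\Gamma(y)\Gamma(y^{-1})\neq 0$.

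Finally I would compute the four values by substituting the identity pairs into the two relations of Theorem \ref{teocondreppargrp}(ii). For $\sigma(x,d(x))$, feed $(x,d(x))$ into the first relation to obtain $\Gamma(x^{-1})\Gamma(x)\Gamma(d(x))=\Gamma(x^{-1})\Gamma(x)\,\sigma(x,d(x))$; the normalization collapses the left side to $\Gamma(x^{-1})\Gamma(x)$, and $\mathbb{K}$-cancellativity, valid because $\Gamma(x^{-1})\Gamma(x)\neq 0$, forces $\sigma(x,d(x))=1$. For $\sigma(r(x),x)$, feed $(r(x),x)$ into the second relation, collapse $\Gamma(r(x))\Gamma(x)$ to $\Gamma(x)$, and cancel $\Gamma(x)\Gamma(x^{-1})\neq 0$ to get $\sigma(r(x),x)=1$; the two $y$-values follow verbatim. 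The only real obstacle is matching each identity pair to the correct relation: one must use the first relation for the pairs $(x,d(x)),(y,d(y))$ and the second for $(r(x),x),(r(y),y)$, so that the surviving factor ($\Gamma(z^{-1})\Gamma(z)$, respectively $\Gamma(z)\Gamma(z^{-1})$) is exactly the one proved nonzero in the previous step, making every cancellation legitimate.
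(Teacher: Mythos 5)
Your proof is correct and follows essentially the same route as the paper: substitute the identity pairs into the factor-set relations of Theorem \ref{teocondreppargrp}, collapse with the normalization hypothesis $\Gamma(r(z))\Gamma(z)=\Gamma(z)=\Gamma(z)\Gamma(d(z))$, and cancel using $\mathbb{K}$-cancellativity. The only (cosmetic) difference is that the paper applies the second relation to $(x,d(x))$ so that the cancelled factor is $\Gamma(x)$ itself, whereas you apply the first relation and therefore need the extra observation that $\Gamma(x^{-1})\Gamma(x)\neq 0$, which you correctly supply via Theorem \ref{teocondreppargrp}(i).
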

\begin{proof}
    We have that
    \begin{align*}
        \Gamma(g) = \Gamma(g)d(\Gamma(g)) & = \Gamma(g)\Gamma(d(g)) = \Gamma(g)\Gamma(d(g))\Gamma(d(g)) \\
        & = \Gamma(gd(g))\Gamma(d(g))\sigma(g,d(g)) = \Gamma(g)\sigma(g,d(g)).
    \end{align*}
    
    Since $\Gamma(g) \neq 0$, it follows that $\sigma(g,d(g)) = 1$. By Proposition \ref{prop414}, $(r(g),g) \in D$, hence $\sigma(r(g),g) = 1$. Similarly $(h,d(h)), (r(h),h) \in D$ in $\sigma(h,d(h)) = \sigma(r(h),h) = 1$.
\end{proof}


\begin{lemma}\label{lema416}
Assume that the values of $\sigma :\G \times\G \to \mathbb{K}$ are 0 and 1, $\sigma(e,e) = 1$ for all $e \in \G_0$ and for all $(g,h) \in \text{\emph{dom}} (\sigma)$,
\begin{align} \label{impl12}
    (gh,h^{-1}),(h^{-1},g^{-1}),(g,d(g)) \in \text{\emph{dom}} (\sigma).
\end{align}

Let $(a,b),(c,d) \in \G^{(2)}$ be such that $\sigma(a,b) = 0$ and $\{ a^{-1}, b \} \subseteq \{ d(c), c^{-1}, d \}$. Then $\sigma(c,d) = 0$.
\end{lemma}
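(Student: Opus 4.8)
The plan is to argue by contraposition. Since $\sigma$ takes only the values $0$ and $1$, writing $D=\operatorname{dom}\sigma=\{(x,y):\sigma(x,y)=1\}$ (a subset of $\G^{2}$), it suffices to prove that $(c,d)\in D$ forces $(a,b)\in D$, contradicting $\sigma(a,b)=0$. The key is to read hypothesis \eqref{impl12} as the assertion that $D$ is closed under the three operations $T_1(x,y)=(xy,y^{-1})$, $T_2(x,y)=(y^{-1},x^{-1})$ and $T_3(x,y)=(x,d(x))$, all of which map $\G^{2}$ into $\G^{2}$.

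First I would record the relevant structure of these operations. Using $d(x)=r(y)$ one checks that $T_1$ and $T_2$ are involutions and that $\langle T_1,T_2\rangle$ generates precisely the six pairs appearing in the first display of Proposition~\ref{prop414}, while $T_3$ is idempotent and produces pairs with an identity in the second coordinate; thus $D$ is invariant under $\langle T_1,T_2\rangle$ and under $T_3$. The technical core is the sub-claim that if $g\in\G$ occurs as a coordinate of some pair of $D$, then $(g,d(g))$ and $(r(g),g)$ belong to $D$ (and likewise for $g^{-1}$). This follows from the chain
\[
(g,d(g))\xrightarrow{T_2}(d(g),g^{-1})\xrightarrow{T_1}(g^{-1},g)\xrightarrow{T_3}(g^{-1},r(g))\xrightarrow{T_2}(r(g),g),
\]
starting from $(g,d(g))=T_3(g,h)\in D$; every product is defined because $r(g^{-1})=d(g)$, and if $g$ occurs as a second coordinate one first applies $T_2$.

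It remains to run the case analysis. Put $e=d(c)=r(d)$. From $(a,b)\in\G^{2}$ and $\{a^{-1},b\}\subseteq\{d(c),c^{-1},d\}$ one gets $a\in\{e,c,d^{-1}\}$ and $b\in\{e,c^{-1},d\}$, and a check of domains and ranges (all equal to $e$) shows that each of the nine resulting combinations indeed lies in $\G^{2}$, so these are exactly the admissible pairs $(a,b)$. Starting from $(c,d)\in D$, the pairs $(c,d)$, $T_3(c,d)=(c,e)$ and $T_2(c,d)=(d^{-1},c^{-1})$ are immediate. Applying the sub-claim to $c$, $c^{-1}$, $d$, $d^{-1}$ — each of which now appears as a coordinate of a pair known to be in $D$ — yields $(e,c^{-1})$, $(e,d)$, $(d^{-1},e)$ and $(d(d),d^{-1})$, and one further application of $T_1$ (to $(r(c),c)$ and to $(d(d),d^{-1})$) gives $(c,c^{-1})$ and $(d^{-1},d)$; the pair $(e,e)$ lies in $D$ by the hypothesis $\sigma(e,e)=1$. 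Hence $(a,b)\in D$ in every case.

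I expect the main obstacle to be exactly this last step, and within it the combinations in which a coordinate is the identity $e=d(c)=r(d)$ — such as $(e,d)$, $(c,e)$ and $(c,c^{-1})$. None of these is reached from $(c,d)$ by a single one of the operations $T_1,T_2,T_3$; they require first manufacturing a pair of the form $(g,d(g))$ with $T_3$ and then transporting it with $T_1$ and $T_2$, which is the whole point of isolating the sub-claim. One must also keep careful track that every product written down is genuinely defined in $\G$.
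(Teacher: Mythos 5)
Your proof is correct and follows essentially the same route as the paper's: argue by contradiction from $(c,d)\in D$, use the closure of $D$ under the operations coming from \eqref{impl12} to show that all nine composable pairs with coordinates drawn from $\{d(c),c^{-1},d\}$ (and their inverses) lie in $D$, and conclude. Your $T_1,T_2,T_3$ formalism and the isolated sub-claim are just a tidier packaging of the paper's derived implications \eqref{impl13} and \eqref{impl14} and its chain $\sigma(d^{-1},c^{-1})=\sigma(c,d(c))=\cdots=1$ followed by the two extra steps for $(c,c^{-1})$ and $(d^{-1},d)$.
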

\begin{proof}
    First, \eqref{impl12} yields that
    \begin{align}
        (g,h) \in D \iff & (gh,h^{-1}) \in D  \iff (g^{-1},gh) \in D \label{impl13} \\ 
        (g,h) \in D & \implies (r(h),h) \in D. \label{impl14}
    \end{align}
    
    Assume that $\sigma(c,d) = 1$. By (\ref{impl12}), (\ref{impl13}) and (\ref{impl14}) we obtain
    \begin{align*}
        \sigma(d^{-1},c^{-1}) = \sigma(c,d(c)) = \sigma(d(c),d) = \sigma(d(c),c^{-1}) = \sigma(d^{-1},d(c)) = 1.
    \end{align*}
    
     Moreover, $\sigma(d(c),d(c)) = 1$ by assumption. Hence we have that $\sigma(a,b)$ may coincide only with $\sigma(c,c^{-1})$ or $\sigma(d^{-1},d)$. 
    
    Applying (\ref{impl12}) and (\ref{impl14}) in $(c^{-1},cd) \in \text{dom} (\sigma)$, we obtain $(c,c^{-1}) \in \text{dom} (\sigma)$. Similarly, starting with $(d^{-1},c^{-1}) \in \text{dom} (\sigma)$ we obtain $(d^{-1},d) \in \text{dom} (\sigma)$.
    
    Therefore $\sigma(c,c^{-1}) = \sigma(d^{-1},d) = 1$, a contradiction.
\end{proof}

\begin{theorem}
    A map $\sigma :\G \times\G \to \mathbb{K}$ such that $\sigma(e,e) = 1$, for all $e \in \G_0$, is an idempotent factor set if and only if its values are 0 and 1 and \eqref{impl12} holds.
\end{theorem}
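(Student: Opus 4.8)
The plan is to prove the two implications separately, leaning on the earlier results about the domain of a factor set for the forward direction, and on the correspondence with $\mathcal{E}(\G)$ together with Lemma \ref{lema416} for the converse.

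For necessity, suppose $\sigma$ is an idempotent factor set. Since the product in $pm(\G)$ is pointwise, the relation $\sigma = \sigma\cdot\sigma$ reads $\sigma(x,y)^2 = \sigma(x,y)$ for every pair; as $\mathbb{K}$ is a field this forces $\sigma(x,y)\in\{0,1\}$, which is the first assertion. For \eqref{impl12}, fix $(x,y)\in\text{dom}\,\sigma$. Proposition \ref{prop414} supplies the chain $(x,y)\in D \iff (xy,y^{-1})\in D \iff (y^{-1},x^{-1})\in D$, so $(xy,y^{-1})$ and $(y^{-1},x^{-1})$ lie in $\text{dom}\,\sigma$; and the Proposition immediately preceding Lemma \ref{lema416} yields $(x,d(x))\in D$. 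This is exactly \eqref{impl12}, so necessity is immediate once those structural results are granted.

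For sufficiency, assume $\sigma$ takes values in $\{0,1\}$, that $\sigma(e,e)=1$, and that \eqref{impl12} holds. Once we know $\sigma$ is a factor set, idempotency is automatic, since $\sigma(x,y)\in\{0,1\}$ gives $\sigma^2=\sigma$; hence the whole problem reduces to showing $\sigma\in pm(\G)$. I would invoke the Proposition characterizing the factor sets of $\G$: it suffices to produce a factor set $\rho$ of the inverse category $\mathcal{E}(\G)$ satisfying the two compatibility conditions with $\sigma$. I would take $\rho$ idempotent, namely $\rho=\epsilon_{\mathfrak{I}}$ attached, through the idempotent--ideal correspondence, to the ideal $\mathfrak{I}$ of $\mathcal{E}(\G)$ generated by $\{[x][y] : \exists xy,\ \sigma(x,y)=0\}$. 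The decisive simplification is that, because of the relation $[x^{-1}][x][y]=[x^{-1}][xy]$ holding in $\mathcal{E}(\G)$, the two multipliers $\rho([x^{-1}],[x][y])$ and $\rho([x^{-1}],[xy])$ coincide for an idempotent $\rho$; hence the defining formula collapses (wherever the relevant products avoid $\mathfrak{I}$) to
\[
\sigma(x,y)=\rho([x],[y]),
\]
and the domain symmetries of Proposition \ref{prop414} guarantee that the cancelled factor stays outside $\mathfrak{I}$ along $\text{dom}\,\sigma$. Both compatibility conditions thus reduce to the single requirement that, for composable $x,y$, one has $\sigma(x,y)=1 \iff [x][y]\notin\mathfrak{I}$.

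The inclusion $\sigma(x,y)=0\Rightarrow[x][y]\in\mathfrak{I}$ holds by construction, so the argument rests entirely on the reverse implication: no element $[x][y]$ with $\sigma(x,y)=1$ may be dragged into $\mathfrak{I}$ by the ideal closure. This is exactly where Lemma \ref{lema416} enters. Any way in which a generator $[a][b]$ with $\sigma(a,b)=0$ would force $[c][d]$ into $\mathfrak{I}$ is, after reduction through the inverse-category relations, governed by the combinatorial condition $\{a^{-1},b\}\subseteq\{d(c),c^{-1},d\}$, and the lemma then returns $\sigma(c,d)=0$. Consequently $\mathfrak{I}$ meets the composable pairs precisely in the zero-set of $\sigma$, the displayed equivalence holds, and $\sigma$ is a factor set. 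I expect this final step --- controlling the ideal generated by the zero-pairs inside $\mathcal{E}(\G)$ and matching its trace on composable pairs with the zero-set of $\sigma$ by means of Lemma \ref{lema416} --- to be the main obstacle; the remaining verifications are routine bookkeeping with \eqref{impl12} and the normalization $\sigma(e,e)=1$.
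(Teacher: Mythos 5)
Your overall strategy coincides with the paper's: the forward direction via pointwise idempotency and Proposition \ref{prop414}, and the converse by passing to the idempotent factor set $\rho$ of $\mathcal{E}(\G)$ attached to the ideal generated by $\{[x][y] : \sigma(x,y)=0\}$, observing that the compatibility formula collapses to $\sigma(u,v)=\rho([u],[v])$ because $[x^{-1}][x][y]=[x^{-1}][xy]$ in $\mathcal{E}(\G)$, and handling the nontrivial inclusion with Lemma \ref{lema416}. The gap is that you assert, rather than prove, the one claim on which everything turns: that whenever $[u][v]=\alpha[x][y]\beta$ with $\sigma(x,y)=0$ and $\alpha,\beta \in \mathcal{E}(\G)$, suitable data can be fed into Lemma \ref{lema416} to force $\sigma(u,v)=0$. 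Saying that the ideal closure is ``governed by the combinatorial condition $\{a^{-1},b\}\subseteq\{d(c),c^{-1},d\}$ after reduction through the inverse-category relations'' is precisely the statement that requires an argument; nothing in the lemma or in \eqref{impl12} tells you which $(a,b,c,d)$ to take, nor why such a containment must emerge from an arbitrary factorization $\alpha[x][y]\beta$.

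The paper closes this gap using the uniqueness of the standard form in $\mathcal{E}(\G)$ (\cite[Proposition 2.7]{lata2020inverse}): writing $\alpha=\varepsilon_{g_1}\cdots\varepsilon_{g_n}[g]$ and $\beta=\varepsilon_{h_1}\cdots\varepsilon_{h_m}[h]$, one computes $[u][v]=\varepsilon_u[uv]$ and $\alpha[x][y]\beta=\varepsilon_{g_1}\cdots\varepsilon_{g_n}\varepsilon_{gxyh_1}\cdots\varepsilon_{gxyh_m}\varepsilon_{g}\varepsilon_{gx}\varepsilon_{gxy}[gxyh]$, and uniqueness of the decomposition forces $\{g,gx,gxy\}\subseteq\{r(u),u,uv\}$. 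A three-way case analysis on $g=r(u)$, $g=u$, $g=uv$ then yields in each case a containment $\{x,xy\}\subseteq\{d(c),c^{-1},d\}$ with $(c,d)$ equal to $(u^{-1},uv)$, $(u,v)$, or $(v,(uv)^{-1})$ respectively, and two of the three cases additionally use the domain symmetries of Proposition \ref{prop414} to convert $\sigma(c,d)=0$ into $\sigma(u,v)=0$. Your proposal would be complete once this computation and case analysis are supplied; as written, the ``main obstacle'' you yourself identify is left unresolved.
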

\begin{proof}
    We already proved $(\Rightarrow)$. For the converse, write $\mathcal{L} = \{ [g][h] : \sigma(g,h) = 0 \}$, $$\mathcal{I} = \mathcal{E}(\G) \mathcal{L} \mathcal{E}(\G) = \{ \alpha \ell \beta : \alpha,\beta \in \mathcal{E}(\G), \ell \in \mathcal{L} \text{ and } \exists \alpha\ell\beta \}$$ and define $\rho : \mathcal{E}(\G)^2 \to \mathbb{K}$ by
    \begin{align*}
        \rho(\alpha,\beta) = \begin{cases}
            1, \text{ if } \alpha\beta \notin \mathcal{I}, \\
            0, \text{ if } \alpha\beta \in \mathcal{I}.
        \end{cases}
    \end{align*}
    
    Since $\mathcal{I}$ is an ideal, it follows that $\rho$ is an idempotent factor set of $\mathcal{E}(\G)$. It only remains to us to prove that $\rho([u],[v]) = \sigma(u,v)$, for all $(u,v) \in \G^{(2)}$.
    
    If $\sigma(u,v) = 0$, then $[u][v] \in \mathcal{L}$ and $[u][v] = [r(u)][u][v][d(v)] \in \mathcal{I}$, from where it follows that $\rho([u],[v]) = 0$. Assume that $\rho([u],[v]) = 0$. Then $[u][v] \in \mathcal{I}$. Moreover, there are $\alpha, \beta \in \mathcal{E}(\G)$, $x,y \in \G$ such that $\exists \alpha[x][y]\beta$, $\sigma(x,y) = 0$ and $[u][v] = \alpha[x][y]\beta$.
    
    Let $\alpha = \varepsilon_{g_1} \cdots \varepsilon_{g_n}[g]$ and $\beta = \varepsilon_{h_1} \cdots \varepsilon_{h_m}[h]$, with $g_i, h_i, g, h \in \G$, $r(g_i) = r(g)$ and $r(h_i) = r(h)$ in the standard form \cite[Proposition 2.7]{lata2020inverse}. Since $\exists \alpha[x][y]\beta$, we also have $d(g) = r(x)$, $d(x) = r(y)$, $d(y) = r(h)$. We will use the uniqueness of the decomposition in the standard form in $\mathcal{E}(\G)$ to conclude the demonstration.
    
    On the one hand, $[u][v] = [u][u^{-1}][u][v] = [u][u^{-1}][uv] = \varepsilon_u[uv]$. On the other hand,
    \begin{align*}
        \alpha[x][y]\beta & = \varepsilon_{g_1} \cdots \varepsilon_{g_n}[g][x][y]\varepsilon_{h_1} \cdots \varepsilon_{h_m}[h] \\
        & = \varepsilon_{g_1} \cdots \varepsilon_{g_n} \varepsilon_{gxyh_1} \cdots \varepsilon_{gxyh_m}[g][x][y][h] \\
        & = \varepsilon_{g_1} \cdots \varepsilon_{g_n} \varepsilon_{gxyh_1} \cdots \varepsilon_{gxyh_m}\varepsilon_{g}\varepsilon_{gx}\varepsilon_{gxy}[gxyh].
    \end{align*}
    
    By the uniqueness of the decomposition in $\mathcal{E}(\G)$, we obtain 
    \begin{align*}
        \{ g, gx, gxy \} \subseteq \{ r(u), u, uv \}.
    \end{align*} Hence $g = r(u), u$ or $uv$. We will handle these cases separately.
    
    If $g = r(u)$, then $r(u) = d(g) = r(x)$, from where it follows that $\{ x, xy \} \subseteq \{ r(u), u, uv\}$. In the Lemma \ref{lema416}, consider $a = x^{-1}$, $b = xy$, $c = u^{-1}$ and $d = uv$. Thus $\sigma(x^{-1},xy) = 0 \Rightarrow \sigma(u,v) = \sigma(u^{-1},uv) = \sigma(c,d) = 0$.
    
    If $g = u$, then $\{ u, ux, uxy \} \subseteq \{ r(u), u, uv \}$. Multiplying by $u^{-1}$ on the left we obtain $\{ x, xy \} \subseteq \{ d(u), u^{-1}, v\}$. Considering $a = x^{-1}$, $b = xy$, $c = u$, $d = v$ as in the Lemma \ref{lema416}, we obtain $\sigma(u,v) = 0$.
    
    Similarly, if $g = uv$, we have $ \{ x, xy \} \subseteq \{ d(v), v^{-1}, (uv)^{-1} \}$. Taking $a = x^{-1}$, $b = xy$, $c = v$, $d = (uv)^{-1}$ we obtain $\sigma(u,v) = \sigma(v^{-1},u^{-1}) = \sigma(v,v^{-1}u^{-1}) = \sigma(c,d) = 0$.
\end{proof}

Let $\Gamma :\G \to M$ be a partial projective representation of $\G$ on a $\mathbb{K}$-cancellative monoid $M$ and $\sigma :\G \times\G \to \mathbb{K}$ the factor set associated with $\Gamma$. Assume that $\Gamma(r(g))$ and $\Gamma(d(g))$ are left and right identities to $\Gamma(g)$, respectively.

From $(\Gamma(g^{-1})\Gamma(g))\Gamma(g^{-1}) = \Gamma(g^{-1})(\Gamma(g))\Gamma(g^{-1}))$ we obtain $\sigma(g,g^{-1}) = \sigma(g^{-1},g),$ for all $g \in \G$. Write $\eta_g = \Gamma(g)\Gamma(g^{-1})$. Define 
\begin{align*}
    n_g = \begin{cases}
        \eta_g\sigma(g^{-1},g)^{-1}, \text{ if } \Gamma(g) \neq 0, \\
        0, \text{ if } \Gamma(x) = 0.
    \end{cases}
\end{align*}

The properties of $\eta_g$ and $n_g$ will be stated in the next lemma and the computations to demonstrate them are analogous to the case of groups \cite[Lemma 7]{dokuchaev2010partialprojectiverep}.

\begin{lemma}
Let $g,h \in \G$. We have that
\begin{align*}
    \eta_g^2 = \eta_g\sigma(g,g^{-1}), & \text{ } n_g^2 = n_g,  \\
    \eta_g = 0 \iff \sigma(g,g^{-1}) = 0 \iff & \Gamma(g) = 0 \iff n_g = 0, \\
    r(g) = r(h) \implies (\eta_g\eta_h = 0  \iff & \eta_h\eta_g = 0 \iff n_gn_h = 0), \\
    d(g) = r(h) \implies \Gamma(g)n_h & = n_{gh}\Gamma(g), \\
    r(g) = r(h) \implies n_gn_h & = n_hn_g.
\end{align*}
\end{lemma}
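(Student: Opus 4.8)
The plan is to take the five assertions in order and to reduce the last three to the single intertwining identity $\Gamma(x)n_y=n_{xy}\Gamma(x)$, which is where the genuine work lies. Throughout I would lean on the two defining identities of Theorem \ref{teocondreppargrp}(ii), on Theorem \ref{teocondreppargrp}(i) for the vanishing statements, on $\sigma(x,x^{-1})=\sigma(x^{-1},x)$ and on $\mathbb{K}$-cancellativity. First I would record the basic computation $\Gamma(x)\Gamma(x^{-1})\Gamma(x)=\Gamma(x)\sigma(x,x^{-1})$: when $\Gamma(x)\neq 0$ the pair $(x,x^{-1})$ lies in $\mathrm{dom}\,\sigma$, so the second identity applied to $(x,x^{-1})$ gives $\Gamma(x)\Gamma(x^{-1})\Gamma(x)=\Gamma(r(x))\Gamma(x)\sigma(x,x^{-1})=\Gamma(x)\sigma(x,x^{-1})$, using that $\Gamma(r(x))$ is a left identity for $\Gamma(x)$. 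Right-multiplying by $\Gamma(x^{-1})$ and pulling the central scalar out yields $\eta_x^2=\eta_x\sigma(x,x^{-1})$; the case $\Gamma(x)=0$ is trivial since then $\eta_x=0$ and, by the convention on $\sigma$, $\sigma(x,x^{-1})=0$. From here $n_x^2=n_x$ follows by squaring $n_x=\eta_x\sigma(x^{-1},x)^{-1}$ and invoking $\sigma(x,x^{-1})=\sigma(x^{-1},x)$.

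For the chain of equivalences in the second line I would specialize Theorem \ref{teocondreppargrp}(i) to the (always existing) product $x\cdot x^{-1}=r(x)$: it gives $\Gamma(x)\Gamma(x^{-1})=0\iff\Gamma(r(x))\Gamma(x)=0$, and since $\Gamma(r(x))\Gamma(x)=\Gamma(x)$ this reads $\eta_x=0\iff\Gamma(x)=0$. The equivalence $\eta_x=0\iff\sigma(x,x^{-1})=0$ is immediate from the description of $\mathrm{dom}\,\sigma$ as $\{(a,b):\Gamma(a)\Gamma(b)\neq 0\}$, and $\Gamma(x)=0\iff n_x=0$ is built into the definition of $n_x$, because when $\Gamma(x)\neq0$ the factor $\sigma(x^{-1},x)^{-1}$ is a unit and $\eta_x\neq0$.

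The heart of the lemma is the fourth assertion. I would prove $\Gamma(x)n_y=n_{xy}\Gamma(x)$ under $d(x)=r(y)$ by first disposing of the degenerate cases $\Gamma(x)=0$, $\Gamma(y)=0$, and $(x,y)\notin\mathrm{dom}\,\sigma$ separately, in each of which both sides vanish — this uses Theorem \ref{teocondreppargrp}(i) to transfer vanishing across the product $xy$ (e.g. $\Gamma(x)\Gamma(y)=0\iff\Gamma(xy)\Gamma(y^{-1})=0\iff\Gamma(xy)\Gamma(y^{-1}x^{-1})\Gamma(x)=0$) together with $\Gamma(z)=0\iff\eta_z=0$. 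In the remaining case $(x,y)\in\mathrm{dom}\,\sigma$ (so also $\Gamma(xy)\neq0$) I would expand $\Gamma(x)n_y=\Gamma(x)\Gamma(y)\Gamma(y^{-1})\sigma(y^{-1},y)^{-1}$ via the second identity into $\Gamma(xy)\Gamma(y^{-1})\sigma(x,y)\sigma(y^{-1},y)^{-1}$, expand $n_{xy}\Gamma(x)$ from its definition, and reconcile the two by means of the $2$-cocycle equality and Proposition \ref{prop414}, finally cancelling the common $\Gamma$-factors by $\mathbb{K}$-cancellativity to match the scalar coefficients. This bookkeeping of exactly which scalars survive is where the argument is most delicate; it is the analogue — now sensitive to the domain/range constraints of the groupoid — of the group computation in \cite[Lemma 7]{dokuchaev2010partialprojectiverep}.

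Finally, the last two assertions fall out of the fourth. Assuming $\Gamma(x),\Gamma(y)\neq0$ and $r(x)=r(y)$, I would write
\[n_xn_y=\sigma(x^{-1},x)^{-1}\Gamma(x)\bigl(\Gamma(x^{-1})n_y\bigr)=\sigma(x^{-1},x)^{-1}\Gamma(x)\,n_{x^{-1}y}\,\Gamma(x^{-1})=\sigma(x^{-1},x)^{-1}n_y\Gamma(x)\Gamma(x^{-1})=n_yn_x,\]
where the two inner steps apply the intertwining identity to the pairs $(x^{-1},y)$ and $(x,x^{-1}y)$ — legitimate since $d(x^{-1})=r(x)=r(y)$ and $x(x^{-1}y)=y$ — and the degenerate cases are immediate. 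Given this commutativity, the third assertion is purely formal: from $n_xn_y=\eta_x\eta_y\,\sigma(x^{-1},x)^{-1}\sigma(y^{-1},y)^{-1}$ with unit scalars one gets $n_xn_y=0\iff\eta_x\eta_y=0$, symmetrically $n_yn_x=0\iff\eta_y\eta_x=0$, and $n_xn_y=n_yn_x$ closes the loop $\eta_x\eta_y=0\iff\eta_y\eta_x=0$.
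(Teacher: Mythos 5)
Your proof is correct and takes essentially the approach the paper itself defers to: the paper omits the argument, stating only that the computations are analogous to the group case of \cite[Lemma 7]{dokuchaev2010partialprojectiverep}, and your sketch is precisely that computation adapted to the groupoid setting (with Proposition \ref{prop414} and the domain/range constraints supplying the needed membership in $\operatorname{dom}\sigma$ for the cocycle step). The reduction of the third and fifth assertions to the intertwining identity $\Gamma(x)n_y=n_{xy}\Gamma(x)$, and the scalar bookkeeping in its proof, all check out.
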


\section{Twisted Partial Actions on $\mathbb{K}$-monoids}
In Definition \ref{def21}, we introduced the notion of twisted partial actions of groupoids on rings. We now extend this framework by defining twisted partial actions of groupoids on semigroups, and, in particular, on $\mathbb{K}$-monoids.

\begin{defi} \label{deftpak}
A \emph{partial groupoid action $\theta= ( S_g, \theta_g)_{g \in \G}$ of $\G$ on a semigroup $S$} is a collection of ideals $S_x$ and semigroup isomorphisms $\theta_g : S_{g^{-1}} \to S_g$ such that, given $(g,h) \in \G^{(2)}$, 
\begin{enumerate}
   \item[(i)] $S_g \subseteq S_{r(g)}$ and $\theta_e = \text{Id}_{S_e}$, for all $e \in \G_0$;

    \item[(ii)] $\theta_g(S_{g^{-1}} \cap S_h) = S_g \cap S_{gh}$;
    
    \item[(iii)] $\theta_g \circ \theta_h(s) = \theta_{gh}(s)$, for all $s \in S_{h^{-1}} \cap S_{(gh)^{-1}}$.\end{enumerate}
\end{defi}

Although one can define twisted partial actions of groupoids on semigroups (and on $\mathbb{K}$-semigroups), in our setting we are primarily interested in monoids, and in particular in $\mathbb{K}$-monoids. Accordingly, we will not pursue the more general framework here.

\begin{defi}\label{def_pa_monoid}
Let $S$ be a $\mathbb{K}$-monoid and $\theta$ a partial action of $\G$ on $S$ such that every ideal $S_g$ is a monoid and every $\theta_g$ is a $\mathbb{K}$-map. A $\mathbb{K}$-\emph{valued twisting related to} $\theta$ is a map $\sigma :\G \times \G \to \mathbb{K}$ such that
\begin{enumerate}
    \item[(iv)] $\sigma(g,h) = 0 \iff S_g \cap S_{gh} = 0$, for all $(g,h) \in \G^{(2)}$;
    
    \item[(v)] $\sigma(g,d(g)) = \sigma(r(g),g) = 1$;
    
    \item[(vi)] $S_{g} \cap S_{gh} \cap S_{ghk} \neq 0 \implies \sigma(g,h)\sigma(gh,k) = \sigma(h,k)\sigma(g,hk)$ with $(g,h,k) \in \G^{(3)}$.
\end{enumerate}

If the pair $(\theta, \sigma)$ satisfies $(i) - (iii)$ of Definition \ref{deftpak} and, in addition, satisfies $(iv) - (vi)$, then it is called a \emph{groupoid twisted partial action of $\G$ on $S$}.
\end{defi}

\begin{obs}
    We have that $S_g \cap S_h = S_gS_h$, for all $(g,h) \in \G^{(2)}$. In fact, every $S_g$ is generated by a central idempotent, namely $1_g$. Notice that if $S_gS_{gh}S_{ghk} \neq 0$, then all the values of $\sigma$ in Definition \ref{deftpak}(vi) are non-zero.
\end{obs}

Let $A = (A,+,\cdot)$ be a $\mathbb{K}$-cancellative $\mathbb{K}$-algebra, that is, an $\mathbb{K}$-algebra such that the $\mathbb{K}$-monoid $A' = (A,\cdot)$ is $\mathbb{K}$-cancellative. For each unital ideal $I$ of $A$  with identity $1_I \neq 0$, we assume $\mathbb{K} \subseteq I$ under the isomorphism $\mathbb{K} \to \mathbb{K}1_I$.

\begin{prop} \label{propcompalgmon} Under the hypothesis above, the following statements are valid.
    \begin{enumerate}
        \item[(a)] Let $\alpha = (\{D_g\}_{g \in \mathcal{G}}, \{\alpha_g\}_{g \in \mathcal{G}}, \{w_{g,h}\}_{(g,h) \in \mathcal{G}^{(2)}})$ be a groupoid twisted partial action of $\G$ on $A$. If every $w_{g,h} := \kappa(g,h) \in \mathbb{K}$, then defining $\sigma(g,h) = \kappa(g,h)$ for all $(g,h) \in \G^{(2)}$ we have that $\alpha' = (\alpha, \sigma)$ is a groupoid twisted partial action of $\G$ on $A'$.
        
        \item[(b)] Let $\theta' = (A_g, \theta_g, \sigma)_{g \in \G}$ be a groupoid twisted partial action of $\G$ on $A'$. If $(A_g, +, \cdot)$ is an ideal of $A$, $A = \sum_{e \in \G_0} A_e$, and every $\theta_g$ is a ring homomorphism, then $\theta = (\{A_g\}_{g \in \mathcal{G}}, \{\theta_g\}_{g \in \mathcal{G}}, \{w_{g,h}\}_{(g,h) \in \mathcal{G}^{(2)}})$ is a groupoid twisted partial action of $\G$ on $A$, where $w_{g,h} = \sigma(g,h) \in \mathbb{K}$.
    \end{enumerate}
    
    In particular, if every element of $A_g = D_g$ (set equality) is of the form $k1_g$, where $1_g$ is the identity of $A_g$ and $k \in \mathbb{K}$, then there is a one-to-one correspondence between the groupoid twisted partial actions of $\G$ on $A$ and the groupoid twisted partial actions of $\G$ on $A'$.
\end{prop}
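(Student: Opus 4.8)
The plan is to run the correspondence in both directions, checking that the axioms of one structure translate into those of the other. The entire translation rests on two elementary observations. First, since each $D_x=A_x$ is generated by the central idempotent $1_x$, one has $D_x\cap D_y=D_xD_y$, so every intersection of the relevant ideals may be replaced by a product and vice versa. Second, because $\mathbb{K}$ is a field and $\mathcal{M}(D_xD_{xy})\cong D_xD_{xy}$, an element $k\,1_{D_xD_{xy}}$ is an invertible multiplier of $D_xD_{xy}$ if and only if $k\neq 0$ and $D_xD_{xy}\neq 0$. Moreover, as the twists live in the central copy of $\mathbb{K}$, the inner conjugation in Definition~\ref{def21}(iv) collapses to the identity.

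For (a), put $S_x=D_x$ and $\theta_x=\alpha_x$, and set $\sigma(x,y)=k(x,y)$. Items (i)--(iii) of Definition~\ref{deftpak} are read off from items (i)--(iii) of Definition~\ref{def21}; in particular Definition~\ref{def21}(iv) with $w_{g,h}=k(g,h)$ central gives $\alpha_g\circ\alpha_h(a)=\alpha_{gh}(a)$, which is Definition~\ref{deftpak}(iii). Axiom (iv) of Definition~\ref{def_pa_monoid} is exactly the invertibility criterion above: $\sigma(x,y)=0$ iff $w_{x,y}$ is not invertible iff $D_xD_{xy}=S_x\cap S_{xy}=0$; axiom (v) is Definition~\ref{def21}(v) transcribed. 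For the cocycle axiom (vi), substitute $a=1_{D_{g^{-1}}D_hD_{ht}}$ into Definition~\ref{def21}(vi); using $\mathbb{K}$-linearity of $\alpha_g$ and centrality of $\mathbb{K}$ the identity becomes $k(h,t)k(g,ht)\alpha_g(a)=k(g,h)k(gh,t)\alpha_g(a)$. Applying Definition~\ref{def21}(iii) twice and the fact that $\alpha_g$ preserves intersections of ideals of its domain shows $\alpha_g(D_{g^{-1}}D_hD_{ht})=D_gD_{gh}D_{ght}=S_g\cap S_{gh}\cap S_{ght}$, so cancelling $\alpha_g(a)\neq 0$ yields precisely the $2$-cocycle equality exactly when that triple intersection is nonzero.

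For (b), reverse the dictionary: define $w_{x,y}=\sigma(x,y)\,1_{D_xD_{xy}}$, which by axiom (iv) of Definition~\ref{def_pa_monoid} and the field hypothesis is an invertible multiplier. Items (i)--(iii) of Definition~\ref{def21} follow from (i), (ii) of Definition~\ref{deftpak} and $\cap=\cdot$; item (iv) holds because Definition~\ref{deftpak}(iii) gives $\alpha_g\circ\alpha_h=\alpha_{gh}$ on the relevant domain while conjugation by the central scalar $w_{g,h}$ is trivial; item (v) is axiom (v); and item (vi), after the same cancellation of $\alpha_g(a)$, reduces to the cocycle equality of Definition~\ref{def_pa_monoid}(vi), available on exactly the domains where $D_gD_{gh}D_{ght}\neq 0$, the others being zero on both sides. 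Since each $A_x$ is assumed to be a ring ideal and each $\theta_x$ a ring homomorphism, this produces a genuine twisted partial action on $A$. For the one-to-one correspondence, the hypothesis $A_x=D_x=\mathbb{K}1_x$ forces every multiplier to be scalar, so (a) applies to \emph{every} twisted partial action on $A$; conversely a $\mathbb{K}$-map $\mathbb{K}1_{x^{-1}}\to\mathbb{K}1_x$ is automatically a ring homomorphism, so (b) applies to \emph{every} twisted partial action on $A'$. Both assignments leave the ideals and the maps $\alpha_x=\theta_x$ untouched and interchange $w_{x,y}\leftrightarrow\sigma(x,y)$ via the mutually inverse identifications $k\leftrightarrow k\,1_{D_xD_{xy}}$, hence they are inverse bijections.

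The step I expect to be most delicate is the faithful bookkeeping of the zero cases: one must verify that axiom (iv) of Definition~\ref{def_pa_monoid} matches the invertibility of $w_{x,y}$ in $\mathcal{M}(D_xD_{xy})$, that the triple-intersection hypothesis in (vi) matches the nonvanishing of $D_gD_{gh}D_{ght}$ through the isomorphism $\alpha_g$, and that the cancellation of $\alpha_g(a)$ is legitimate. Each of these genuinely uses that $\mathbb{K}$ is a field and that the $\alpha_x$ are $\mathbb{K}$-linear, and it is precisely here that the equivalence could fail were either hypothesis dropped.
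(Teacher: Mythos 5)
Your proposal is correct and follows essentially the same route as the paper: the central scalar twists make the conjugation in Definition \ref{def21}(iv) trivial, the cocycle identity is obtained by evaluating Definition \ref{def21}(vi) at the idempotent $1_{x^{-1}}1_y1_{yz}$ and cancelling via $\mathbb{K}$-cancellativity, and the bijection in the final claim rests on the same two observations (scalar multipliers forced by $A_x=\mathbb{K}1_x$, and $\mathbb{K}$-maps on $\mathbb{K}1_x$ being automatically additive). No substantive differences from the paper's argument.
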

\begin{proof}
(a): It is clear that if $D_g$ is an unital ideal of $A$ then $D_g$ is an ideal of $A'$ that is a monoid. Furthermore, the ring isomorphism $\alpha_g : D_{g^{-1}} \to D_g$ can be seen as a monoid isomorphism. So the conditions (i) and (ii) of Definition \ref{deftpak} hold. Notice that given $b \in \mathbb{K}$
\begin{align*}
    (a)R_b = ab = a(b1_A) = b(a1_A) = ba = L_b(a),
\end{align*}
for all $a \in A$, so $R_b \equiv L_b$. In this way, we have that 
\begin{align*}
    \alpha_g \circ \alpha_h(a) & = w_{g,h}\alpha_{gh}(a)w_{g,h}^{-1} = 
    L_{\kappa(g,h)}\alpha_{gh}(a)R_{\kappa(g,h)}^{-1}\\ 
    & = L_{\kappa(g,h)}\alpha_{gh}(a)R_{\kappa(g,h)^{-1}} = L_{\kappa(g,h)}L_{\kappa(g,h)^{-1}}\alpha_{gh}(a) = \alpha_{gh}(a),
\end{align*}
for all $(g,h) \in \G^{(2)}$, $a \in A$, so that (iii) of Definition \ref{deftpak} holds. 

Define, then, $\sigma(g,h) = \kappa(g,h)$, for all $(g,h) \in \G^{(2)}$. Now we will prove the conditions (iv)-(vi) of Definition \ref{def_pa_monoid}. 

(iv): If $D_gD_{gh} = 0$, we have that the only possible multiplier is $w_{g,h} = 0$. Hence, $\sigma(g,h) = \kappa(g,h) = 0_\mathbb{K}$. By the other hand, if $\sigma(g,h) = 0_\mathbb{K}$, then $w_{g,h} = 0$ is an invertible element in $D_gD_{gh}$, which implies $D_gD_{gh} = 0$.

(v): It follows directly from Definition \ref{def21} (v).

(vi): By Definition \ref{def21} (vi) we have that $\alpha_g(aw_{h,k})w_{g,hk} = \alpha_g(a)w_{g,h}w_{gh,k}$,
for all $(g,h,k) \in \G^{(3)}$ and $a \in D_{g^{-1}}D_hD_{hk}$. Applying the multipliers we obtain
\begin{align*}
     \sigma(g,hk)\alpha_g(\sigma(h,k)a) = \sigma(g,h)\sigma(gh,k)\alpha_g(a).
\end{align*}

Since $\alpha_g$ is a $\mathbb{K}$-algebra isomorphism, it is a $\mathbb{K}$-map. Hence,
\begin{align*}
    \sigma(h,k)\sigma(g,hk)\alpha_g(a) = \sigma(g,h)\sigma(gh,k)\alpha_g(a).
\end{align*}

However, $A$ is $\mathbb{K}$-cancellative. Taking $a = 1_{g^{-1}}1_h1_{hk}$, it implies that $\alpha_g(a) \neq 0$, then
\begin{align*}
    \sigma(h,k)\sigma(g,hk) = \sigma(g,h)\sigma(gh,k),
\end{align*}
for all $(g,h,k) \in \G^{(3)}$.

(b): By a similar argumentation of (a), taking $w_{g,h} = (R_{\sigma(g,h)}, L_{\sigma(g,h)})$, we have that the items (i)-(vi) of Definitions \ref{deftpak} and \ref{def_pa_monoid} imply (i)-(vi) of Definition \ref{def21} if and only if the ideals $A_g$ and isomorphisms $\theta_g$ are compatible with the sum in $A$.

Now, if every element of $A_g = D_g$ is of the form $\ell1_g$, $g \in \G$, $\ell \in \mathbb{K}$, then every element in $D_gD_{gh}$ is of the form $\ell1_g1_{gh}$, for $(g, h) \in \G^{(2)}$, $\ell \in \mathbb{K}$. In fact, if $a \in D_gD_{gh}$, thus $a = \sum b_ic_i$, where $b_i \in D_g$, $c_i \in D_{gh}$. By assumption, $b_i = \ell_{b_i}1_g$ and $c_i = \ell_{c_i}1_{gh}$ for some $\ell_{b_i}, \ell_{c_i} \in \mathbb{K}$. Thus,
\begin{align*}
    a = \sum (\ell_{b_i}\ell_{c_i})1_g1_{gh} = \left ( \sum \ell_{b_i}\ell_{c_i} \right )1_g1_{gh} = \ell1_g1_{gh},
\end{align*}
where $\ell = \sum \ell_{b_i}\ell_{c_i} \in \mathbb{K}$. This writing is unique since $A$ is $\mathbb{K}$-cancellative. Therefore every multiplier is of the form we need in (a). 

Furthermore, we can induce a sum in $A_g$ by the sum in $\mathbb{K}$. In fact, $\ell_1 1_g + \ell_2 1_g = (\ell_1 + \ell_2)1_g$, for all $\ell_1,\ell_2 \in \mathbb{K}$, $g \in \G$. Since $\theta_g$ a $\mathbb{K}$-map, we obtain
\begin{align*}
    \theta_g(\ell_1 1_{g^{-1}} + \ell_2 1_{g^{-1}}) & = \theta_g((\ell_1 + \ell_2)1_{g^{-1}}) = (\ell_1 + \ell_2)\theta_g(1_g^{-1}) \\
    & = (\ell_1 + \ell_2)1_g = \ell_11_g + \ell_21_g = \theta_g(\ell_11_{g^{-1}}) + \theta_g(\ell_21_{g^{-1}}),
\end{align*}
that is, $\theta_g$ is a ring isomorphism and $A_g$ is a ring ideal, giving us the necessary conditions to apply (b). Then, by items (a) and (b), the map $\alpha \mapsto \alpha'$ is clearly a bijection between the groupoid twisted partial actions of $\G$ on $A$ and the groupoid twisted partial actions of $\G$ on $A'$.
\end{proof}

At first glance, the definition of a twisted partial action of a groupoid on a $\mathbb{K}$-monoid may not appear to differ from that of a (non-twisted) partial action, since the twisting $\sigma$ does not intervene in the composition of the partial isomorphisms. However, the essential distinction becomes apparent in the construction of the associated crossed products.

Let $(\theta,\sigma)$ be a  groupoid twisted partial action of $\G$ on a $\mathbb{K}$-monoid $S$. Set $$L = \{ a \delta_g : a \in S_g, g \in \G \} \cup \{ 0 \},$$ where the $\delta_g$'s are symbols. Define a multiplication on $L$ by
\begin{align*}
    (a\delta_g)(b\delta_h) = \begin{cases}
        \theta_g(\theta_{g}^{-1}(a)b)\sigma(g,h) \delta_{gh}, \text{ if } (g,h) \in \G^{(2)}, \\
        0, \text{ otherwise}
    \end{cases}
\end{align*}
and extend it by setting  $0 \cdot a\delta_g = a\delta_g \cdot 0 = 0$, for all $g \in \G, a \in S_g$.

By Theorem \ref{teoassoc}, this operation is associative, so $L$ is a semigroup. We define the crossed product $S *_{\theta,\sigma}\G$ as the quotient semigroup $L/I$, where $I = \{ 0 \delta_g : g \in \G \} \cup \{ 0 \}$ is an ideal of $L$.

\begin{prop}
    Let $(A,+,\cdot)$ be a $\mathbb{K}$-cancellative $\mathbb{K}$-algebra and consider $A' = (A,\cdot)$ the $\mathbb{K}$-cancellative monoid associated with $A$. Let $\alpha = (\{D_g\}_{g \in \mathcal{G}}, \{\alpha_g\}_{g \in \mathcal{G}}, \{w_{g,h}\}_{(g,h) \in \mathcal{G}^{(2)}})$ be a groupoid twisted partial action of a groupoid $\G$ on $A$ such that $\G_0$ is finite and $w_{g,h} = \kappa(g,h) \in \mathbb{K}$ as in Proposition \ref{propcompalgmon}(a). Then there is a monomorphism $A' *_{\alpha',\sigma}\G \to (A *_{\alpha} \G)'$, where $\alpha'$ and $\sigma$ are defined as in Proposition \ref{propcompalgmon}(a) and $(A *_{\alpha} \G)'$ is the $\mathbb{K}$-cancellative monoid obtained from $A *_{\alpha} \G$.
\end{prop}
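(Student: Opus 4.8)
The plan is to write down the obvious degree-preserving map and verify it is an injective homomorphism. Recall that $A' \rtimes_{\alpha',\sigma}\G = L/I$, where $L = \{a\delta_x : a \in D_x,\ x \in \G\}\cup\{0\}$ consists of single monomials together with a formal zero, and $I = \{0\delta_x : x\in\G\}\cup\{0\}$; thus the nonzero classes of $L/I$ are exactly the $\overline{a\delta_x}$ with $0 \neq a \in D_x$. On the other hand $A\rtimes_\alpha\G = \bigoplus_{g\in\G} D_g\delta_g$, so each monomial $a\delta_x$ (with $a\in D_x$) is already an element of $A\rtimes_\alpha\G$, and there $a\delta_x = 0$ precisely when $a = 0$. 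I would therefore define
$$\Phi: A'\rtimes_{\alpha',\sigma}\G \longrightarrow (A\rtimes_\alpha\G)', \qquad \Phi(0)=0, \quad \Phi(\overline{a\delta_x}) = a\delta_x,$$
interpreting the right-hand side as the element of the direct sum supported in degree $x$.

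First I would check that $\Phi$ is well defined on the quotient $L/I$: since $0\delta_x$ is the zero of $D_x\delta_x\subseteq A\rtimes_\alpha\G$, every element of $I$ is sent to $0$, so $\Phi$ does not depend on the chosen representative. Next comes multiplicativity. Here the crucial observation, already contained in the proof of Proposition \ref{propcompalgmon}(a), is that when $w_{x,y}=k(x,y)\in\mathbb{K}$ the right multiplier acts as the scalar $k(x,y)$, so that in $A\rtimes_\alpha\G$ one has $(a\delta_x)(b\delta_y)=k(x,y)\,\alpha_x(\alpha_x^{-1}(a)b)\,\delta_{xy}$ whenever $(x,y)\in\G^2$, and $0$ otherwise. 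Comparing with the semigroup product $(a\delta_x)(b\delta_y)=\theta_x(\theta_x^{-1}(a)b)\sigma(x,y)\delta_{xy}$ (with $\theta_x=\alpha_x$ and $\sigma(x,y)=k(x,y)$), and noting that $\exists xy$ means exactly $(x,y)\in\G^2$, the two products coincide term by term; hence $\Phi(\overline{u}\,\overline{v})=\Phi(\overline{u})\,\Phi(\overline{v})$. The same scalar computation shows $\Phi$ is a $\mathbb{K}$-map, i.e.\ $\Phi(k\cdot\overline{a\delta_x})=k\,\Phi(\overline{a\delta_x})$.

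Finally, injectivity is immediate from the direct-sum structure of $A\rtimes_\alpha\G$: a nonzero class $\overline{a\delta_x}$ with $a\neq 0$ is sent to the nonzero homogeneous element $a\delta_x$, so $\Phi$ collapses no nonzero class to $0$; and if $\Phi(\overline{a\delta_x})=\Phi(\overline{b\delta_y})$ with $a,b\neq0$, then $a\delta_x=b\delta_y$ in $\bigoplus_g D_g\delta_g$ forces $x=y$ and $a=b$, whence $\overline{a\delta_x}=\overline{b\delta_y}$. Thus $\Phi$ is a monomorphism.

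I would expect the only genuinely delicate point to be the bookkeeping reconciling the two constructions: the semigroup crossed product is built from single monomials modulo the zero monomials, whereas the ring crossed product is a direct sum, so one must make sure that ``product defined'' and ``product equal to zero'' are matched on both sides. Once $w_{x,y}$ is recognized as a genuine scalar this matching is routine, and no appeal to the heavier globalization machinery is needed (associativity of the target being guaranteed by Theorem \ref{teoassoc}).
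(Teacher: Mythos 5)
Your proposal is correct and follows essentially the same route as the paper: the identity-on-monomials map $a\delta_x \mapsto a\delta_x$, with injectivity read off from the direct-sum grading and multiplicativity checked by matching the two product formulas once $w_{x,y}$ is identified with the scalar $\sigma(x,y)=k(x,y)$. Your extra care with the quotient $L/I$ and the zero monomials is a point the paper glosses over, but the argument is the same.
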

\begin{proof}
    By Theorem \ref{teoassoc}, we have that $A *_{\alpha} \G$ is an $\mathbb{K}$-algebra with unity $1_A = \sum_{e \in \G_0} 1_e\delta_e$.
    
    The fact that $A *_{\alpha} \G$ is $\mathbb{K}$-cancellative follows directly from the fact that $A$ is $\mathbb{K}$-cancellative. So we can consider the $\mathbb{K}$-cancellative monoid $(A *_{\alpha} \G)'$ by dropping the sum in $A *_{\alpha} \G$.
    
    Consider the map
    \begin{align*}
        \varphi : A' *_{\alpha',\sigma} \G & \to (A *_{\alpha} \G)' \\
        a\delta_g & \mapsto a\delta_g.
    \end{align*}
    
     We want to show that $\varphi$ is an monomorphism of $\mathbb{K}$-semigroups. Clearly, $\varphi$ is injective. Let $a\delta_x, b\delta_y \in A' \rtimes_{\alpha',\sigma} \G$. Then
    \begin{align*}
        \varphi((a\delta_x)(b\delta_y)) & = \begin{cases}
        \varphi(\alpha'((\alpha')_x^{-1}(a)b)\sigma(x,y)\delta_{xy}), \text{ if } \exists xy \\
        0, \text{ otherwise } 
        \end{cases} \\
        & = \begin{cases}
        \alpha'((\alpha')_x^{-1}(a)b)\sigma(x,y)\delta_{xy}, \text{ if } \exists xy \\
        0, \text{ otherwise } 
        \end{cases} \\
        & = \begin{cases}
        \alpha(\alpha_x^{-1}(a)b)w_{x,y}\delta_{xy}, \text{ if } \exists xy \\
        0, \text{ otherwise } 
        \end{cases} \\
        & = (a\delta_x)(b\delta_y) \\
        & = \varphi(a\delta_x)\varphi(b\delta_y).
    \end{align*}
\end{proof}

\begin{obs}
In the notation of the proposition above, the monomorphism $\varphi$ is an isomorphism if and only if $\G = \{e\}$ is the trivial group. Indeed, the implication $(\Leftarrow)$ is clear. For the converse, assume that $\G$ has at least two distinct elements $x \neq y$. Then $a\delta_x + b\delta_y \in A \rtimes_\alpha \G$, for some $a \in A_x = D_x$, $b \in A_y = D_y$, but $a\delta_x + b\delta_y \neq c\delta_z$ for any $z \in \G$, $c \in A_z = D_z$. Hence $a\delta_x + b\delta_y \notin A' \rtimes_{\alpha',\sigma} \G$, which shows that $\varphi$ cannot be surjective.
\end{obs}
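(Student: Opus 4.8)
The plan is to reduce the biconditional to a single question about surjectivity. Since the proposition above already establishes that $\varphi$ is a monomorphism of $\mathbb{K}$-semigroups, and a bijective semigroup homomorphism is an isomorphism, $\varphi$ is an isomorphism if and only if it is onto. The whole argument therefore amounts to deciding whether every element of the codomain $(A \rtimes_{\alpha} \G)'$ lies in the image of $\varphi$, that is, whether every element of the ring $A \rtimes_{\alpha} \G$ is either $0$ or a single ``monomial'' $c\delta_z$ with $z \in \G$ and $c \in D_z$. The structural fact I would exploit is that $A \rtimes_{\alpha} \G = \bigoplus_{g \in \G} D_g\delta_g$ is a \emph{direct} sum, so each element has a well-defined finite support $\{g \in \G : a_g \neq 0\}$; the image of $\varphi$ is then precisely the set of elements of support at most one (support $0$ being the zero element and support $1$ being a genuine monomial).

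For the implication $(\Leftarrow)$ I would simply note that when $\G = \{e\}$ the direct sum collapses to $A \rtimes_{\alpha} \G = D_e\delta_e$, so every element is of the form $a\delta_e$, a sum $a\delta_e + b\delta_e = (a+b)\delta_e$ remaining a single monomial. Hence $\varphi(a\delta_e) = a\delta_e$ already exhausts $(A \rtimes_{\alpha} \G)'$, so $\varphi$ is onto and the isomorphism follows.

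For the converse I would argue by contraposition, following the indication in the statement: assume $\G$ has two distinct elements $x \neq y$, choose nonzero $a \in D_x$ and $b \in D_y$, and form $w = a\delta_x + b\delta_y \in A \rtimes_{\alpha} \G$. By the direct-sum structure $w$ has support $\{x,y\}$, of size two, whereas any $c\delta_z$ in the image of $\varphi$ has support of size at most one and $0$ has empty support. Comparing supports yields $w \neq c\delta_z$ for all $z,c$ and $w \neq 0$, so $w \notin \operatorname{im}\varphi$; thus $\varphi$ fails to be surjective and is not an isomorphism.

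The only genuinely delicate point — and the step I would treat with care — is guaranteeing that the two coefficients $a,b$ can be taken nonzero, since otherwise $w$ would degenerate into a monomial and the support comparison would collapse. This is exactly where the standing hypothesis that each relevant domain $D_x$ is a unital ideal with identity $1_x \neq 0$ (hence $D_x \neq 0$) is needed; with that in hand the support argument is immediate and the proof closes.
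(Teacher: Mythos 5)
Your proposal is correct and is essentially the paper's own argument: the backward implication is immediate, and for the forward one you exhibit $a\delta_x+b\delta_y$ with $a\in D_x$, $b\in D_y$ nonzero, which by directness of the sum $\bigoplus_{g\in\G}D_g\delta_g$ cannot equal any single monomial $c\delta_z$ (nor $0$), so $\varphi$ fails to be onto. The only difference is that you make explicit two points the paper leaves tacit --- the reduction of the biconditional to surjectivity of the already-injective $\varphi$, and the need to choose $a,b$ nonzero (available because the ideals in this section are unital with $1_x\neq 0$, without which the remark would degenerate) --- which is a refinement of, not a departure from, the same proof.
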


\section{The relation between partial projective representations and twisted partial actions}

Let $\Gamma$ be a partial projective representation of $\G$ on a $\mathbb{K}$-cancellative monoid $M$. Denote by $\Gamma(\G)$ the subsemigroup of $M$ generated by the elements $a\Gamma(g)$, with $a \in \mathbb{K}$ and $g \in \G$, and let $S$ be the commutative subsemigroup of $\Gamma(\G)$ generated by the elements $\alpha n_g$, with $\alpha \in \mathbb{K}$ and $g \in \G$. Define $S_g = S n_g$. Then
\begin{align*}
    S_g = 0 
    \iff n_g = 0 
    \iff \Gamma(g) = 0 
    \iff \Gamma(g^{-1}) = 0 
    \iff S_{g^{-1}} = 0 .
\end{align*}

Moreover, it is easy to see that, for all $(g,h,k) \in \G^{(3)}$,
\begin{align*}
    S_g S_{gh} S_{ghk} = 0 
    \iff \Gamma(g)\Gamma(h)\Gamma(k) = 0 .
\end{align*} Furthermore, if $r(g) \neq r(h)$, then $n_g n_h = 0$, and hence $S_g S_h = 0$.

We can now show how to construct a twisted partial action from a partial projective representation.

\begin{prop} \label{proppprtotpa}
    Let $g \in \G$. Define $\theta_g : S_{g^{-1}} \to S_g$ by
    \begin{align*}
        \theta_g(s) = \begin{cases}
        \Gamma(g)s\Gamma(g^{-1})\sigma(g^{-1},g)^{-1}, & \text{ if } \Gamma(g^{-1}) \neq 0, \\
            0, & \text{ if } \Gamma(g^{-1}) = 0.
        \end{cases}
    \end{align*} Then $\theta = \theta^{\Gamma} = (S_g, \theta_g)_{g \in \G}$ is a partial action of $\G$ on $S$ and the factor set $\sigma$ is a twisting related to $\theta$. Moreover, the map
    \begin{align*}
        \psi : S *_{\theta,\sigma}\G & \to \Gamma(\G) \\
        a\delta_g & \mapsto a\Gamma(g)
    \end{align*}
    is an epimorphism of semigroups.
\end{prop}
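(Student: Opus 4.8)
The plan is to prove the three assertions in turn: that $\theta$ is a partial action (conditions (i)--(iii) of Definition \ref{deftpak}), that $\sigma$ is a compatible twisting (conditions (iv)--(vi) of Definition \ref{def_pa_monoid}), and that $\psi$ is an epimorphism. The computational engine for everything is a \emph{relabelling formula} for $\theta_x$. First I would record that for $e\in\G_0$ the hypothesis that $\Gamma(e)$ is idempotent gives $\eta_e=n_e=\Gamma(e)$, and that every nonzero element of $S$ has the form $\alpha\,n_{z_1}\cdots n_{z_k}$ with $\alpha\in\mathbb{K}$ (since $S$ is commutative, the $n_z$ are idempotent, and scalars pull out of products). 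For $s=\alpha\,n_{z_1}\cdots n_{z_k}\in S_{x^{-1}}$, so that each $r(z_i)=d(x)$, I would push $\Gamma(x)$ rightward through the product via $\Gamma(x)n_z=n_{xz}\Gamma(x)$ and collapse $\Gamma(x)\Gamma(x^{-1})\sigma(x^{-1},x)^{-1}=\eta_x\sigma(x,x^{-1})^{-1}=n_x$, obtaining
\begin{align*}
    \theta_x(\alpha\,n_{z_1}\cdots n_{z_k})=\alpha\,n_{xz_1}\cdots n_{xz_k}\,n_x .
\end{align*}
In particular $\theta_x(n_{x^{-1}})=n_x$, $\theta_x$ is a $\mathbb{K}$-map, and a short direct computation (using $\Gamma(x^{-1})\Gamma(x)=\eta_{x^{-1}}=n_{x^{-1}}\sigma(x,x^{-1})$) shows $\theta_x$ is multiplicative with two-sided inverse $\theta_{x^{-1}}$; hence each $\theta_x\colon S_{x^{-1}}\to S_x$ is an isomorphism of the monoids $S_{x^{-1}},S_x$ (with identities $n_{x^{-1}},n_x$).

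With the relabelling formula in hand the partial-action axioms become index bookkeeping. For (i), $n_xn_{r(x)}=n_x$ gives $S_x\subseteq S_{r(x)}$, while $\theta_e(s)=\Gamma(e)s\Gamma(e)=sn_e=s$ gives $\theta_e=\mathrm{Id}$. Since $S$ is commutative with idempotent generators, every intersection is $S_a\cap S_b=Sn_an_b$; applying $\theta_x$ to $S_{x^{-1}}\cap S_y=Sn_{x^{-1}}n_y$ and reading off the identity $\theta_x(n_{x^{-1}}n_y)=n_xn_{xy}$ yields $\theta_x(S_{x^{-1}}\cap S_y)=Sn_xn_{xy}=S_x\cap S_{xy}$, which is (ii). For (iii) the relabelling formula gives, for $s\in S_{y^{-1}}\cap S_{(xy)^{-1}}$,
\begin{align*}
    \theta_x(\theta_y(s))=\theta_{xy}(s)\,n_x ,
\end{align*}
so the one genuinely delicate point is to absorb the spurious factor $n_x$. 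I would do this structurally rather than by a forced identity: applying (ii) to the pair $(xy,y^{-1})$ gives $\theta_{xy}\bigl(S_{(xy)^{-1}}\cap S_{y^{-1}}\bigr)=S_{xy}\cap S_x\subseteq S_x$, whence $\theta_{xy}(s)\in S_x$ and $\theta_{xy}(s)n_x=\theta_{xy}(s)$. This is the main obstacle of the proof, and it is precisely where the groupoid (rather than group) setting forces one to track ranges and domains carefully.

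For the twisting, conditions (v) and (vi) are essentially already available: (v) is the identity $\sigma(x,d(x))=\sigma(r(x),x)=1$ established earlier (for $x$ with $\Gamma(x)\neq 0$), and (vi) is immediate from the $2$-cocycle implication in Theorem \ref{teocondreppargrp} together with the equivalence $S_x\cap S_{xy}\cap S_{xyz}\neq 0\iff \Gamma(x)\Gamma(y)\Gamma(z)\neq 0$ recorded before the statement. For (iv) I would specialise that equivalence to $z=d(y)$ and use $\Gamma(y)\Gamma(d(y))=\Gamma(y)$ to obtain $S_x\cap S_{xy}=0\iff \Gamma(x)\Gamma(y)=0\iff\sigma(x,y)=0$.

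Finally, for $\psi$: it kills $I$ because $\psi(0\delta_x)=0$, so it descends to $S\rtimes_{\theta,\sigma}\G$. The key algebraic identity is $a\Gamma(x)=\Gamma(x)\theta_x^{-1}(a)$ for $a\in S_x$, obtained by collapsing $\Gamma(x^{-1})\sigma(x^{-1},x)^{-1}\Gamma(x)=n_{x^{-1}}$. Writing $a'=\theta_x^{-1}(a)\in S_{x^{-1}}$, I would compute $\psi\bigl((a\delta_x)(b\delta_y)\bigr)=\theta_x(a'b)\sigma(x,y)\Gamma(xy)$, and then use the partial-projective relation from Theorem \ref{teocondreppargrp},
\begin{align*}
    \Gamma(x^{-1})\Gamma(xy)\sigma(x,y)=\Gamma(x^{-1})\Gamma(x)\Gamma(y)=n_{x^{-1}}\sigma(x,x^{-1})\Gamma(y),
\end{align*}
to collapse this to $\Gamma(x)a'b\Gamma(y)=a\Gamma(x)b\Gamma(y)=\psi(a\delta_x)\psi(b\delta_y)$; note here one may only use the weak relations, not $\Gamma(x)\Gamma(y)=\sigma(x,y)\Gamma(xy)$. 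The case $\nexists xy$ uses the same identity: $a\Gamma(x)b\Gamma(y)=\Gamma(x)(a'b)\Gamma(y)$ with $a'b\in S_{x^{-1}}\cap S_y=Sn_{x^{-1}}n_y=0$ since $d(x)\neq r(y)$. Surjectivity then follows from $\Gamma(x)n_x=\Gamma(x)$ (take $y=d(x)$ in $\Gamma(x)n_y=n_{xy}\Gamma(x)$), so that $\psi(\alpha n_x\delta_x)=\alpha\Gamma(x)$ realises every generator of $\Gamma(\G)$.
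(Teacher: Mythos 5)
Your proof is correct, and for the first two-thirds of the statement it is genuinely more self-contained than the paper's. The paper disposes of the partial-action and twisting axioms in one sentence, by asserting that the verification is ``similar to'' the group case \cite[Theorem 6]{dokuchaev2010partialprojectiverep} and only checking the two groupoid-specific conditions $S_x\subseteq S_{r(x)}$ and $\theta_e=\mathrm{Id}_{S_e}$; you instead prove everything from the single relabelling identity $\theta_x(\alpha\,n_{z_1}\cdots n_{z_k})=\alpha\,n_{xz_1}\cdots n_{xz_k}n_x$, which reduces (i)--(iii) and (iv)--(vi) to index bookkeeping plus the facts recorded in the Lemma before Section 6. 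This buys a proof that does not lean on an external reference, and it isolates exactly where the groupoid setting matters: the spurious factor $n_x$ in $\theta_x\theta_y(s)=\theta_{xy}(s)n_x$, which you absorb correctly by noting via (ii) applied to the pair $(xy,y^{-1})$ that $\theta_{xy}(s)\in S_x$. For the map $\psi$, your computation is essentially the paper's, just organized around the identity $a\Gamma(x)=\Gamma(x)\theta_x^{-1}(a)$ rather than a fully expanded string of $\Gamma$'s and $\eta$'s; your surjectivity argument (the image is a subsemigroup containing the generators $\alpha\Gamma(x)$, since $\psi(\alpha n_x\delta_x)=\alpha\Gamma(x)$) is shorter than the paper's, which explicitly rewrites an arbitrary word $\alpha\Gamma(x_1)\cdots\Gamma(x_l)$ as an element of $S_{x_1\cdots x_l}\Gamma(x_1\cdots x_l)$ and thereby records the extra fact $\Gamma(\G)=\bigcup_x S_x\Gamma(x)$. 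One caveat you flag yourself: condition (v) of Definition \ref{def_pa_monoid} is only obtained for $x$ with $\Gamma(x)\neq 0$ (when $\Gamma(x)=0$ one has $\sigma(x,d(x))=0$, which conflicts with (iv)); this tension sits in the paper's definition and its own proof inherits it equally, so it is not a defect of your argument.
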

\begin{proof}
    The proof that $(\theta,\sigma)$ defines a twisted partial action is analogous to the one given in \cite[Theorem 6]{dokuchaev2010partialprojectiverep}. Therefore, it suffices to verify that $S_g \subseteq S_{r(g)}$ for all $g \in \G$ and that $\theta_e = \text{Id}_{S_e}$. This follows from the fact that $n_{r(g)} = \Gamma(r(g))$ is a two-sided identity for $n_g$.
    
    Let us verify that $\psi$ is an epimorphism of semigroups. Let $a = a n_g \in S_g$ and $b = b n_h \in S_h$. We have
    \begin{align*}
        \psi(a \delta_g b \delta_h) & = \begin{cases}
            \theta_g(\theta_g^{-1}(a)b)\sigma(g,h)\Gamma(gh), \text{ if } (g,h) \in \G^{(2)}, \\
            0, \text{ otherwise}
        \end{cases} \\
        & = \begin{cases}
            \eta_ga\Gamma(g)b\Gamma(g^{-1})\Gamma(g)\Gamma(h)\sigma(g^{-1},g)^{-2}, \text{ if } (g,h) \in \G^{(2)}, \\
            0, \text{ otherwise}
        \end{cases}       
        \end{align*}

        \begin{align*}
        & = \begin{cases}
            \eta_ga\Gamma(g)b\eta_{g^{-1}}\Gamma(h)\sigma(g^{-1},g)^{-2}, \text{ if } (g,h) \in \G^{(2)}, \\
            0, \text{ otherwise}
        \end{cases} \\
        & = \begin{cases}
            n_ga\Gamma(g)bn_{g^{-1}}\Gamma(h), \text{ if } (g,h) \in \G^{(2)}, \\
            0, \text{ otherwise}
        \end{cases} \\
        & = \begin{cases}
            a\Gamma(g)b\Gamma(h), \text{ if } (g,h) \in \G^{(2)}, \\
            0, \text{ otherwise}
        \end{cases} \\
        & = \psi(a\delta_g)\psi(b\delta_h),
    \end{align*}
    since $\Gamma(g)n_{g^{-1}} = \Gamma(g)$ and $S$ is commutative. This shows that $\psi$ is a homomorphism.
    
    Consider $a = \ell \Gamma(g_1)\Gamma(g_2) \cdots \Gamma(g_m) \in \Gamma(\G)$, where $\ell \in \mathbb{K}$, $(g_1, \ldots , g_m) \in \G^{(m)}$. We have that
    \begin{align*}
        a = \ell' n_{g_1}n_{g_1g_2} \cdots n_{g_1g_2 \cdots g_m}\Gamma(g_1g_2 \cdots g_m) \in S_g\Gamma(g),
    \end{align*}
    where $g = g_1 \cdots g_m$ and $\ell' \in \mathbb{K}$. Hence, $\Gamma(\G) \subseteq \cup_{g \in \G} S_g\Gamma(g)$. If $(g_1, \ldots , g_m) \notin\G^{(m)}$, then $a = 0$. The reverse inclusion is immediate, and hence $\psi$ is surjective.
\end{proof}

Let $\theta$ be a partial action of $\G$ on a $\mathbb{K}$-monoid $T$ with twisting $\sigma$. Each ideal $T_g$ is generated by $1_g$, a central idempotent. It follows that
\begin{align*}
    T_{g_1} \cdots T_{g_m} = T1_{g_1} \cdots 1_{g_m},
\end{align*}
for all $(g_1, \ldots, g_m) \in \G^{(m)}$. Then,
\begin{align*}
    \theta_g(1_{g^{-1}}1_{h}1_k) = 1_g1_{gh}1_{gk},
\end{align*}
for all $(g,h,k) \in \G^{(3)}$.

\begin{prop} \label{proptpatoppr}
    The map 
    \begin{align*}
        \Gamma_\theta :\G & \to T *_{\theta, \sigma}\G \\
        g & \mapsto 1_g\delta_g,
    \end{align*}
    is a partial projective representation with factor set $\sigma$.
\end{prop}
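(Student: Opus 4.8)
The plan is to verify that $\Gamma_\theta$ satisfies the two conditions characterising partial projective representations in Theorem \ref{teocondreppargrp}, and that the associated factor set is precisely $\sigma$. The computational engine is the single identity $\theta_g(1_{g^{-1}}1_h)=1_g1_{gh}$ for $(g,h)\in\G^2$: indeed, by Remark 1 the ideal $S_{g^{-1}}\cap S_h=S_{g^{-1}}S_h$ is a monoid with identity $1_{g^{-1}}1_h$, and by Definition \ref{deftpak}(ii) the restriction of $\theta_g$ is a semigroup isomorphism onto $S_g\cap S_{gh}=S_gS_{gh}$, whose identity is $1_g1_{gh}$; an isomorphism of monoids preserves identities. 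Using this, together with $\theta_g^{-1}(1_g)=1_{g^{-1}}$ and the multiplication rule of $T\rtimes_{\theta,\sigma}\G$, I first compute, whenever $\exists xy$,
\begin{align*}
    \Gamma_\theta(x)\Gamma_\theta(y) = (1_x\delta_x)(1_y\delta_y) = \theta_x(1_{x^{-1}}1_y)\sigma(x,y)\delta_{xy} = \sigma(x,y)1_x1_{xy}\delta_{xy},
\end{align*}
while $\Gamma_\theta(x)\Gamma_\theta(y)=0$ when $\nexists xy$, by definition of the crossed product.

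For condition (i) of Theorem \ref{teocondreppargrp} I record the two companion products. Since $\exists xy$ forces $\exists x^{-1}(xy)$ with $x^{-1}(xy)=y$, and $\exists (xy)y^{-1}$ with $(xy)y^{-1}=x$, the same computation gives
\begin{align*}
    \Gamma_\theta(x^{-1})\Gamma_\theta(xy) = \sigma(x^{-1},xy)1_{x^{-1}}1_y\delta_y, \quad \Gamma_\theta(xy)\Gamma_\theta(y^{-1}) = \sigma(xy,y^{-1})1_x1_{xy}\delta_x.
\end{align*}
By Definition \ref{def_pa_monoid}(iv) each scalar vanishes exactly when the corresponding intersection of ideals is zero, and since $\theta_x$ carries $S_{x^{-1}}\cap S_y$ isomorphically onto $S_x\cap S_{xy}$, all three displayed products vanish simultaneously, precisely when $S_x\cap S_{xy}=0$, i.e. $\sigma(x,y)=0$. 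This yields the chain of equivalences in (i), the case $\nexists xy$ being immediate.

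Condition (ii) is then obtained by direct substitution. By (iv), $\text{dom}\,\sigma=\{(x,y)\in\G^2:\sigma(x,y)\neq 0\}$ coincides with $\{(x,y):\Gamma_\theta(x)\Gamma_\theta(y)\neq 0\}$. Multiplying the expression for $\Gamma_\theta(x)\Gamma_\theta(y)$ on the left by $\Gamma_\theta(x^{-1})$ and using $\theta_{x^{-1}}(1_x1_{xy})=1_{x^{-1}}1_y$ gives
\begin{align*}
    \Gamma_\theta(x^{-1})\Gamma_\theta(x)\Gamma_\theta(y) = \sigma(x,y)\sigma(x^{-1},xy)1_{x^{-1}}1_y\delta_y = \Gamma_\theta(x^{-1})\Gamma_\theta(xy)\sigma(x,y),
\end{align*}
and symmetrically for the right-hand relation with $\Gamma_\theta(y^{-1})$, the regrouping being legitimate by associativity of the crossed product (Theorem \ref{teoassoc}). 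Hence $\sigma$ satisfies the defining equations of the factor set of $\Gamma_\theta$, and by the uniqueness asserted in Theorem \ref{teocondreppargrp} it \emph{is} that factor set; the $2$-cocycle identity follows from Definition \ref{def_pa_monoid}(vi) once one observes, by the same kind of computation, that $\Gamma_\theta(x)\Gamma_\theta(y)\Gamma_\theta(z)\neq 0 \iff S_x\cap S_{xy}\cap S_{xyz}\neq 0$.

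The only delicate point throughout is the bookkeeping of identities under $\theta$: the repeated appeal to $\theta_g(1_{g^{-1}}1_h)=1_g1_{gh}$ and its inverse, and the verification that the composites $x^{-1}(xy)$, $(xy)y^{-1}$ exist and simplify to $y$ and $x$. Everything else is a formal substitution into the crossed-product multiplication, so I expect no genuine obstacle beyond this index bookkeeping.
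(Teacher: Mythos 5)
Your proof is correct and takes essentially the same route as the paper: the paper verifies the identity conditions $\Gamma_\theta(r(x))\Gamma_\theta(x)=\Gamma_\theta(x)=\Gamma_\theta(x)\Gamma_\theta(d(x))$ and the case $\nexists xy$ directly and then delegates the case $(x,y)\in\G^2$ to \cite[Theorem 8]{dokuchaev2010partialprojectiverep}, whereas you carry out that remaining computation explicitly through the characterization in Theorem \ref{teocondreppargrp}, with the identity $\theta_g(1_{g^{-1}}1_h)=1_g1_{gh}$ doing the work exactly as it should. I see no gaps.
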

\begin{proof}
    We already know that $\sigma(r(g),g) = \sigma(g,d(g)) = 1$, for all $g \in \G$. It follows that $\Gamma_{\theta}(r(g)) = 1_{T_{r(g)}}\delta_{r(g)} = (1_{T_{r(g)}}\delta_{r(g)})^2 = \Gamma_{\theta}(r(g))^2$ is a left identity for $\Gamma_\theta(g) = 1_g\delta_g$. Analogously, one has $\Gamma_\theta(g)\Gamma_\theta(d(g)) = \Gamma_\theta(g)$. If $(g,h) \notin \G^{(2)}$, then $\Gamma_\theta(g)\Gamma_\theta(h) = 1_g\delta_h 1_g\delta_h = 0$. For $(g,h) \in \G^{(2)}$, the claim follows by the same arguments as in \cite[Theorem 8]{dokuchaev2010partialprojectiverep}.
\end{proof}

The next result establishes the compatibility between the constructions given in Propositions \ref{proppprtotpa} and \ref{proptpatoppr}.

\begin{prop} \label{propcomp}
    Every partial projective representation $\Gamma :\G \to M$ of $\G$ on a $\mathbb{K}$-cancellative monoid $M$ can be recovered from $\Gamma_{\theta^\Gamma} :\G \to S *_{\theta^\Gamma,\sigma}\G$ via $\psi$, that is,
    \begin{align*}
        \Gamma(g) = \psi(\Gamma_{\theta^\Gamma}(g)),
    \end{align*}
    for all $g \in \G$. If $M = \Gamma(\G)$, then $\Gamma$ is recovered together with $M$.
    
    Reciprocally, if $(\theta,\sigma)$ is a twisted partial action of $\G$ on a $\mathbb{K}$-cancellative monoid $T$ with $T_g = T1_g$ and $S = \Gamma_\theta(G)$, there is a $\mathbb{K}$-monoid monomorphism $\phi : S \to T$ such that for all $g \in \G$, $\phi|_{S_g} : S_g \to T_g$ is a $\mathbb{K}$-monoid monomorphism and
    \begin{align*}
        \phi(\theta_g^{\Gamma_\theta}(a)) = \theta_g(\phi(a)),  
    \end{align*}
    for all $a \in S_{g^{-1}}$. 
    
    Furthermore, if $T$ is generated by the elements of the form $\ell1_g$, $\ell \in \mathbb{K}$, then $\phi$ is an isomorphism that restricts to isomorphisms $\phi|_{S_g} : S_g \to T_g$, so $(\theta, \sigma)$ is recovered from $(\theta^{\Gamma_\theta},\sigma)$ via $\phi$.
\end{prop}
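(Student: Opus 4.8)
The plan is to prove the two directions separately, since the first assertion is a short computation while the converse requires constructing and analysing the map $\phi$.

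For the first assertion I would begin by writing $\Gamma_{\theta^\Gamma}(x)$ explicitly. As $S$ is commutative and $n_x$ is idempotent, $n_x$ is the identity of the ideal $S_x = Sn_x$, so $\Gamma_{\theta^\Gamma}(x) = 1_{S_x}\delta_x = n_x\delta_x$. Applying the epimorphism $\psi$ of Proposition \ref{proppprtotpa} gives $\psi(\Gamma_{\theta^\Gamma}(x)) = n_x\Gamma(x)$, so everything reduces to the identity $n_x\Gamma(x) = \Gamma(x)$. I would obtain this from $n_x = \Gamma(x)\Gamma(x^{-1})\sigma(x^{-1},x)^{-1}$ together with the symmetry $\sigma(x,x^{-1}) = \sigma(x^{-1},x)$: these give $n_x\Gamma(x) = \Gamma(x)\Gamma(x^{-1})\Gamma(x)\sigma(x^{-1},x)^{-1} = \Gamma(x)n_{x^{-1}}$, and $\Gamma(x)n_{x^{-1}} = \Gamma(x)$ was already recorded in the proof of Proposition \ref{proppprtotpa}. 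The last clause is then immediate, since $\psi$ is onto $\Gamma(\G)$: when $T = \Gamma(\G)$ the whole semigroup, not merely the values $\Gamma(x)$, is recovered as the image of $\psi$.

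For the converse I would first make $S$ explicit by applying the construction of Proposition \ref{proppprtotpa} to $\Gamma_\theta$ and using $\Gamma_\theta(x) = 1_x\delta_x$. A direct computation gives $\eta_x^{\Gamma_\theta} = 1_x\sigma(x,x^{-1})\delta_{r(x)}$ and hence $n_x^{\Gamma_\theta} = 1_x\delta_{r(x)}$, so the commutative semigroup $S$ is generated by the elements $k\,1_x\delta_{r(x)}$, a typical element being $k\,1_{x_1}\cdots 1_{x_m}\delta_e$ with $r(x_i)=e$, while $S_x = Sn_x^{\Gamma_\theta}$ consists of the elements $k\,1_{x_1}\cdots 1_{x_m}1_x\delta_{r(x)}$. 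I would then define $\phi\colon S\to T$ on generators by $\phi(1_x\delta_{r(x)}) = 1_x$ and extend it $\mathbb{K}$-linearly and multiplicatively, i.e. $\phi(k\,1_{x_1}\cdots 1_{x_m}\delta_e) = k\,1_{x_1}\cdots 1_{x_m}$; informally $\phi$ \emph{forgets the symbol} $\delta$. That $\phi$ is a homomorphism rests on the fact that products in $S$ take place over a single identity $e$, where the crossed-product multiplication collapses (because $\theta_e=\mathrm{Id}$ and $\sigma(e,e)=1$) to the multiplication of the idempotents in $T$, while products spread over different identities vanish on both sides. The restriction $\phi|_{S_x}\colon S_x\to T_x = T1_x$ lands in $T_x$ by construction and sends the identity $n_x^{\Gamma_\theta}=1_x\delta_{r(x)}$ to $1_x$, so it is a $\mathbb{K}$-monoid map.

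It remains to treat injectivity, the intertwining relation, and the isomorphism statement. Injectivity I would deduce from the $\mathbb{K}$-cancellativity of $T$ together with the decomposition of $S$ over the identities of $\G$: within a single component an equality $\phi(s)=\phi(s')$ forces the scalars and the idempotent products to coincide, whence $s=s'$, and distinct components map into the ideals $T_e = T1_e$ attached to distinct identities, which are orthogonal, so no collapsing occurs. For the intertwining I would substitute the explicit formula $\theta_x^{\Gamma_\theta}(s)=\Gamma_\theta(x)\,s\,\Gamma_\theta(x^{-1})\sigma(x^{-1},x)^{-1}$ of Proposition \ref{proppprtotpa}, compute the product in $S\subseteq T\rtimes_{\theta,\sigma}\G$, apply $\phi$, and match the outcome with $\theta_x(\phi(s))$ using that $\theta_x$ is a $\mathbb{K}$-map, the relation $\theta_x(1_{x^{-1}}1_y1_z)=1_x1_{xy}1_{xz}$, and the $2$-cocycle identity for $\sigma$; these are precisely the cancellations appearing in \cite[Theorem 8]{dokuchaev2010partialprojectiverep}. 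Finally, if $T$ is generated by the elements $\alpha 1_x$, then each generator equals $\phi(\alpha 1_x\delta_{r(x)})$, so $\phi$ is surjective and hence an isomorphism restricting to isomorphisms $S_x\to T_x$, which together with the intertwining relation recovers $(\theta,\sigma)$ from $(\theta^{\Gamma_\theta},\sigma)$ via $\phi$. The main obstacle is exactly this analysis of $\phi$ in the groupoid setting: unlike the group case of \cite{dokuchaev2010partialprojectiverep}, where a single identity makes $\phi$ visibly injective, here one must control products distributed over the several identities of $\G$, and both the well-definedness and the injectivity of $\phi$ hinge on the orthogonality of the ideals associated with distinct identities.
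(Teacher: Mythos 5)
Your argument follows the same route as the paper's proof. For the first assertion you reduce to $n_x\Gamma(x)=\Gamma(x)$ exactly as the paper does (the paper writes this equality without comment; your derivation via $n_x\Gamma(x)=\Gamma(x)n_{x^{-1}}$ and the standing identity $\Gamma(x)n_{x^{-1}}=\Gamma(x)$ is the right justification). For the converse you compute $n_x^{\Gamma_\theta}=1_x\delta_{r(x)}$ by the same crossed-product calculation, define $\phi$ by erasing the symbol $\delta$ --- which is precisely the paper's map $i\colon a\delta_e\mapsto a$ composed with the inclusion $\overline{T}\subseteq T$ --- and verify the intertwining relation and the final surjectivity statement in the same way.

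The one place where you go beyond the paper is also the one step that does not hold up as stated. You hang both the multiplicativity and the injectivity of $\phi$ on the claim that the ideals $T1_e$ and $T1_{e'}$ attached to distinct identities $e\neq e'$ are orthogonal, but you give no argument, and orthogonality does not follow from Definitions \ref{deftpak} and \ref{def_pa_monoid}: nothing there constrains the product $1_e1_{e'}$ for a non-composable pair of identities. For instance, take $\G$ with two objects $e\neq e'$ and only identity arrows, $T=\mathbb{K}$, $T_e=T_{e'}=T$, $\theta$ trivial and $\sigma\equiv 1$; this is a legitimate twisted partial action with $1_e1_{e'}=1\neq 0$, yet $(1_e\delta_e)(1_{e'}\delta_{e'})=0$ in the crossed product while the product of the images under $\phi$ is $1$, so $\phi$ is neither multiplicative nor injective there. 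Orthogonality \emph{is} automatic for the semigroup built from a partial projective representation, because $\Gamma(e)\Gamma(e')=0$ whenever $\nexists\, ee'$, and the paper's own proof quietly sidesteps the issue by only exhibiting the componentwise isomorphisms $S_e\to\overline{T}_e1_e$ and never examining cross-component products. So you have correctly isolated the genuinely delicate groupoid phenomenon (absent in the group case of \cite{dokuchaev2010partialprojectiverep}), but as written your assertion of orthogonality is a gap: you must either prove $1_e1_{e'}=0$ in the setting you intend, or record it as an additional hypothesis on $(\theta,\sigma)$. The rest of the proposal --- the description of $S$ and $S_x$, the intertwining computation via the $2$-cocycle identity and $\theta_x(1_{x^{-1}}1_y1_z)=1_x1_{xy}1_{xz}$, and the surjectivity when $T$ is generated by the $\alpha 1_x$ --- is sound and matches the paper.
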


\begin{proof}
    Notice first that $\Gamma_{\theta^\Gamma}(g) = n_g\delta_g \in S *_{\theta^\Gamma, \sigma}\G$. Therefore, $\psi(\Gamma_{\theta^\Gamma}(g)) = \psi(n_g\delta_g) = n_g\Gamma(g) = \Gamma(g)$.
    
    For the converse, consider the partial projective representation $\Gamma_\theta :\G \to T *_{\theta,\sigma}\G$. Then
    \begin{align*}
        n_g & = \Gamma_{\theta}(g)\Gamma_{\theta}(g^{-1})\sigma(g^{-1},g)^{-1} = 1_g\delta_g \cdot 1_{g^{-1}}\delta_{g^{-1}} \cdot \sigma(g^{-1},g)^{-1} \\
        & = \theta_g(\theta_{g^{-1}}(1_g)1_{g^{-1}})\sigma(g,g^{-1})\delta_{r(x)} \sigma(g^{-1},g)^{-1} = \theta_g(1_{g^{-1}})\delta_{r(g)} = 1_g\delta_{r(g)}.
    \end{align*}
    
    Since $S$ is generated by the elements $\ell n_g$, $\ell \in \mathbb{K}, g \in \G$, we have that $S_g = \overline{T}_g\delta_{r(g)} \subseteq T *_{\theta,\sigma}\G$, for all $e \in \G_0$, where $\overline{T}$ is the submonoid of $T$ generated by the elements $\ell 1_g$, $\ell \in \mathbb{K}, g \in \G$ and $\overline{T}_g = \overline{T} \cap T_g$. Clearly we have isomorphisms of $\mathbb{K}$-monoids 
    \begin{align*}
        i : S_e & \to \overline{T}_e1_e \\
        a\delta_e & \mapsto a1_e = a,
    \end{align*}
    where $e \in \G_0$, that restricts to isomorphisms of $\mathbb{K}$-monoids
    \begin{align*}
        i : S_g = Sn_g = \overline{T}_g1_g\delta_{r(g)} & \to \overline{T}_g1_g \subseteq T_g
    \end{align*}
    for all $g \in \G$.
    
    Consider $\phi$ the composition between $i$ and the inclusion $\overline{T} \subseteq T$. For $a \in S_{g^{-1}}$, write $a = b\delta_{d(g)}$ with $b \in \overline{T}1_{g^{-1}} \subseteq T_{g^{-1}}$. Then
    \begin{align*}
        \theta_g^{\Gamma_\theta}(a) = \theta_g^{\Gamma_\theta}(b\delta_{d(g)}) & = \Gamma_{\theta}(g)b\delta_{d(g)}\Gamma_\theta(g^{-1})\sigma(g^{-1},g)^{-1} = 1_g\delta_g(b\delta_{d(g)}1_{g^{-1}}\delta_{g^{-1}})\sigma(g^{-1},g)^{-1} \\
        & = 1_g\delta_gb\delta_{g^{-1}}\sigma(g^{-1},g)^{-1} = \theta_g(\theta_{g^{-1}}(1_g)b)\delta_{r(x)} = \theta_g(b)\delta_{r(x)} = \theta_g(\phi(a))\delta_{r(x)}.
    \end{align*}
    
    Hence $\phi(\theta_g^{\Gamma_\theta}(a)) = \phi(\theta_g(\phi(a))\delta_{r(g)}) = \theta_g(\phi(a))$.
\end{proof}

\begin{defi}
Let $\Gamma \colon \G \to M$ and $\Gamma' \colon \G \to M'$ be partial projective representations. A \emph{morphism} from $\Gamma$ to $\Gamma'$ is a $\mathbb{K}$-monoid homomorphism $\varphi \colon M \to M'$ such that $\varphi \circ \Gamma = \Gamma'$. If $\varphi$ is an epimorphism, we say that $\Gamma \to \Gamma'$ is an \emph{epimorphism of partial projective representations}. Monomorphisms and isomorphisms are defined analogously.
\end{defi}

\begin{exe}
The morphism $\psi$ defined in Proposition \ref{proppprtotpa} is an epimorphism of partial projective representations.
\end{exe}

Applying a partial projective representation morphism $\varphi : M \to M'$ in
\begin{align*}
    \Gamma(g^{-1})\Gamma(g)\Gamma(h) = \Gamma(g^{-1})\Gamma(gh) \sigma(g,h),
\end{align*}
we get
\begin{align*}
    \Gamma'(g^{-1})\Gamma'(gh) \sigma'(g,h) = \Gamma'(g^{-1})\Gamma'(h)\Gamma'(h) = \Gamma'(g^{-1})\Gamma'(gh) \sigma(g,h),
\end{align*}
for all $(g,h) \in \G^{(2)}$.

Since $M'$ is $\mathbb{K}$-cancellative, it follows that $\sigma(g,h) = \sigma'(g,h)$, for all $(g,h) \in \text{dom} \sigma'$. In particular, dom$\sigma' \subseteq \text{dom} \sigma$.

\begin{defi}
We say that a partial projective representation $\Gamma$ is \emph{adjusted} if $M = \Gamma(\G)$. We say that $\Gamma$ is \emph{separating} if, denoting by $S$ the subsemigroup of $\Gamma(\G)$ generated by $\ell 1_g$, $\ell \in \mathbb{K}, g \in \G$ and $S_g = Sn_g$, we have that $S_g\Gamma(g) \cap S_h\Gamma(h) = 0$, $g \neq h$. We will denote by
\begin{enumerate}
    \item[(i)] $\mathcal{P}pr\G$ the category which the objects are partial projective representations of $\G$ and the morphisms are morphism of partial projective representations;
    
    \item[(ii)] $\mathcal{AP}pr\G$ the subcategory of $\mathcal{P}pr\G$ which objects are adjusted partial projective representations;
    
    \item[(iii)] $\mathcal{SP}pr\G$ the subcategory of $\mathcal{P}pr\G$ which objects are separating partial projective representations.
\end{enumerate}
\end{defi}

Notice that Hom$_{\mathcal{AP}prG}(\Gamma,\Gamma')$ is either empty or has a unique element that is an epimorphism.

\begin{defi}
Let $\theta = (T_g, \theta_g)_{g \in \G}$, $\theta' = (T_g', \theta_g')_{g \in \G}$ be partial actions of $\G$ in $\mathbb{K}$-cancellative monoids $T$ and $T'$, respectively. We say that a $\mathbb{K}$-monoid homomorphism $\varphi : T \to T'$ is a \emph{morphism of $\theta$ on $\theta'$} if $\varphi(T_g) \subseteq T'_g$, for all $g \in \G$, and the diagram below is commutative.
\end{defi}

\begin{center}
\begin{tikzcd}
T_{g^{-1}} \arrow{rr}{\theta_g} \arrow{d}{\varphi} &  & T_g \arrow{d}{\varphi} \\
T_{g^{-1}}' \arrow{rr}{\theta_g'}                    &  & T_g' 
\end{tikzcd}    
\end{center}

If $\sigma$ and $\sigma'$ are $\mathbb{K}$-valued twistings associated with $\theta$ and $\theta'$ respectively, we say that a morphism $\varphi : \theta \to \theta'$ is a twisted partial action morphism if
\begin{align*}
    \varphi(T_g \cap T_{gh}) \neq 0 \implies \sigma(g,h) = \sigma'(g,h)
\end{align*}
for all $(g,h) \in \G^{(2)}$.

Since $T_g = T1_g$ and $T_g' = T'1_g'$, we have that $T_g = 0 \implies T_g' = 0$ and more generally
\begin{align*}
    T_{g_1} \cap T_{g_2} \cap \cdots \cap T_{g_m} = 0 \implies T_{g_1}' \cap T_{g_2}' \cap \cdots \cap T_{g_m}' = 0.
\end{align*}

In particular, $T_g' \cap T_{gh}' \neq 0 \implies T_g \cap T_{gh} \neq 0$ when $(g,h) \in \G^{(2)}$, that is, dom$ \sigma' \subseteq \text{dom} \sigma$. 

Notice that if $\varphi$ is an monomorphism, then
\begin{align*}
    T_{g_1} \cap T_{g_2} \cap \cdots \cap T_{g_m} = 0 \iff T_{g_1}' \cap T_{g_2}' \cap \cdots \cap T_{g_m}' = 0.
\end{align*}

A twisted partial action of $\G$ on $T$ is \emph{adjusted} if $T = \bar T$ as in Proposition \ref{proptpatoppr}. Every twisted partial action $\theta$ can become adjusted replacing $T$ by $\bar T$, and then we will denote this new twisted partial action by $\bar \theta$. In particular, if $A$ is a $\mathbb{K}$-cancellative $\mathbb{K}$-algebra such that the $\mathbb{K}$-monoid $(A,\cdot)$ is adjusted, the last assumption in Proposition \ref{propcompalgmon} holds.

\begin{defi}
We will denote by
\begin{enumerate}
    \item[(i)] $\mathcal{TP}a\G$ the category of twisted partial actions of $\G$ on $\mathbb{K}$-cancellative monoids and twisted partial action morphisms;
    
    \item[(ii)] $\mathcal{ATP}a\G$ the subcategory of $\mathcal{TP}a\G$ of the adjusted twisted partial actions of $\G$.
\end{enumerate}
\end{defi}

As in the case of adjusted partial projective representations, the set $\text{Hom}_{\mathcal{ATP}a\G}(\theta,\theta')$ is either empty or consists of a single element, which is necessarily an epimorphism. We can now state the main result of this section. The theorem below is the groupoid analogue of \cite[Theorem 3]{dokuchaev2012partialprojectiverep2}. In view of Propositions \ref{proppprtotpa} and \ref{propcomp}, most of its proof carries over with only minor modifications.

\begin{theorem}\label{teoprincipal}
    \begin{enumerate}
        \item[(i)] There is a functor $\mathcal{P}pr\G \to \mathcal{ATP}a\G$ that takes any $\Gamma \in \text{\emph{Ob}} \mathcal{P}pr\G$ to $\theta^\Gamma$.
        
        \item[(ii)] There is a functor $\mathcal{TP}a\G \to \mathcal{SP}pr\G$ that takes $\theta \in \text{\emph{Ob}} \mathcal{TP}a\G$ to $\Gamma_\theta$. Furthermore, if $\theta$ is adjusted, then $\Gamma_\theta$ is adjusted.
        
        \item[(iii)] For all $\Gamma \in \text{\emph{Ob}} \mathcal{P}pr\G$ there is a morphism $\Gamma_{\theta^\Gamma} \to \Gamma$ that is an epimorphism if $\Gamma$ is adjusted and is a monomorphism if $\Gamma$ is separating.
        
        \item[(iv)] For all $\theta \in \text{\emph{Ob}} \mathcal{TP}a\G$ there is a monomorphism $\theta^{\Gamma_\theta} \to \theta$ that is an isomorphism if $\theta$ is adjusted. In any case, $\theta^{\Gamma_\theta} \cong \bar \theta$.
        
        \item[(v)] The restriction of the functors in (i) and (ii) form an equivalence between the categories $\mathcal{AP}pr\G \cap \mathcal{SP}pr\G$ and $\mathcal{ATP}a\G$.
        
        \item[(vi)] The restriction of the functor from (i) to $\mathcal{AP}pr\G \to \mathcal{ATP}a\G$ is right adjoint to the restriction of the functor from (ii) to $\mathcal{ATP}a\G \to \mathcal{AP}pr\G$.
    \end{enumerate}
\end{theorem}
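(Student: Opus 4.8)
The plan is to assemble the statement from the three preceding propositions, which already carry out all the object-level work, and then to supply the missing morphism-level verifications. Propositions \ref{proppprtotpa} and \ref{proptpatoppr} define the assignments $\Gamma \mapsto \theta^{\Gamma}$ and $\theta \mapsto \Gamma_{\theta}$ on objects, while Proposition \ref{propcomp} is, once translated into categorical language, exactly items (iii) and (iv): the epimorphism $\psi$ of Proposition \ref{proppprtotpa} is the required morphism $\Gamma_{\theta^{\Gamma}} \to \Gamma$ (epi when $\Gamma$ is adjusted, mono when $\Gamma$ is separating), and the monomorphism $\phi$ of Proposition \ref{propcomp} is the required $\theta^{\Gamma_{\theta}} \to \theta$ (an isomorphism when $\theta$ is adjusted, and $\theta^{\Gamma_{\theta}} \cong \bar{\theta}$ in general). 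Thus the genuine work concentrates in the functoriality of (i) and (ii) and in the formal consequences (v) and (vi).

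For (i) I would first check that $\theta^{\Gamma}$ lands in $\mathcal{ATP}a\G$: its ideals $S_{x} = Sn_{x}$ have identities $n_{x}$, and $S$ is by definition generated by the elements $\alpha n_{x}$, so $S = \bar{S}$ and $\theta^{\Gamma}$ is adjusted. On morphisms, a $\varphi \colon \Gamma \to \Gamma'$ is a $\mathbb{K}$-semigroup homomorphism with $\varphi \circ \Gamma = \Gamma'$; since $n_{x} = \Gamma(x)\Gamma(x^{-1})\sigma(x^{-1},x)^{-1}$ and $\sigma = \sigma'$ on $\mathrm{dom}\,\sigma'$ (as observed just before the definition of adjusted and separating representations), one gets $\varphi(n_{x}) = n'_{x}$, hence $\varphi(S_{x}) \subseteq S'_{x}$. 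The formula for $\theta^{\Gamma}_{x}$ then yields $\varphi \circ \theta^{\Gamma}_{x} = \theta^{\Gamma'}_{x} \circ \varphi$ on the relevant domains, and the twisting-compatibility $\varphi(S_{x} \cap S_{xy}) \neq 0 \Rightarrow \sigma(x,y) = \sigma'(x,y)$ is the same observation. For (ii) a morphism $\varphi \colon \theta \to \theta'$ induces $a\delta_{x} \mapsto \varphi(a)\delta_{x}$ on crossed products, which is multiplicative precisely because the twisting-compatibility forces $\sigma(x,y) = \sigma'(x,y)$ whenever both sides of the product formula are nonzero, and it sends $\Gamma_{\theta}(x) = 1_{x}\delta_{x}$ to $1'_{x}\delta_{x} = \Gamma_{\theta'}(x)$; that $\Gamma_{\theta}$ is separating follows from the $\G$-grading of $T \rtimes_{\theta,\sigma} \G$, since $S'_{x}\Gamma_{\theta}(x)$ lies in the $\delta_{x}$-component and therefore meets $S'_{y}\Gamma_{\theta}(y)$ only in $0$ for $x \neq y$, while $\theta$ adjusted gives $\Gamma_{\theta}$ adjusted.

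Items (v) and (vi) are then formal. By the remarks preceding the statement, both $\mathcal{AP}pr\G$ and $\mathcal{ATP}a\G$ are thin (each Hom-set is empty or a single epimorphism), so naturality of every transformation is automatic and an adjunction reduces to a biconditional on nonemptiness of Hom-sets. For (v), when $\Gamma \in \mathcal{AP}pr\G \cap \mathcal{SP}pr\G$ the counit $\Gamma_{\theta^{\Gamma}} \to \Gamma$ from (iii) is simultaneously epi and mono, hence an isomorphism, and when $\theta \in \mathcal{ATP}a\G$ the inverse of the isomorphism in (iv) gives a unit $\theta \to \theta^{\Gamma_{\theta}}$; these are mutually quasi-inverse equivalences. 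For (vi) it suffices to show $\mathrm{Hom}_{\mathcal{AP}pr\G}(\Gamma_{\theta},\Gamma) \neq \emptyset \iff \mathrm{Hom}_{\mathcal{ATP}a\G}(\theta,\theta^{\Gamma}) \neq \emptyset$, transporting a morphism across the (iso)morphisms $\psi$ and $\phi$; the triangle identities hold trivially by thinness.

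The main obstacle I anticipate is the well-definedness of the induced map on crossed products in (ii) together with the matching of twistings: one must verify that the twisting-compatibility built into a twisted partial action morphism is exactly what makes $a\delta_{x} \mapsto \varphi(a)\delta_{x}$ multiplicative, and, crucially, that $\varphi$ carries each identity $1_{x}$ to $1'_{x}$ rather than to some other idempotent of $T'_{x}$ (so that $\Gamma_{\theta}$ is really sent to $\Gamma_{\theta'}$); here $\mathbb{K}$-cancellativity of the codomain and the fact that morphisms in the relevant categories are epimorphisms should be used to pin down the image of $1_{x}$. This bookkeeping, and confirming that the resulting transformations are genuine units and counits, is where the delicate checking lies; everything else is either contained in the cited propositions or forced by the thinness of the categories.
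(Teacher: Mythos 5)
Your proposal is correct and follows essentially the same route as the paper: the object-level content is delegated to Propositions \ref{proppprtotpa}, \ref{proptpatoppr} and \ref{propcomp}, functoriality is checked via $\varphi(n_x)=n_x'$ for (i) and the induced map $a\delta_x\mapsto\varphi(a)\delta_x'$ on crossed products for (ii), and (v)--(vi) are reduced to the thinness of the Hom-sets exactly as in the text. The one place you go slightly beyond the paper is in flagging (and sketching fixes for) details the paper asserts without comment, namely that $\varphi(1_x)=1_x'$ and that $\Gamma_\theta$ is separating via the grading of $T\rtimes_{\theta,\sigma}\G$; these are genuine verifications, consistent with the paper's framework, rather than a different approach.
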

\begin{proof}
    (i): Let $(\Gamma, M) \in \text{Ob} \mathcal{P}pr\G$ with factor set $\sigma$. We have that $(\theta^\Gamma,\sigma)$ is a twisted partial action of $\G$ on $S$ by Proposition \ref{proppprtotpa}. Since $S$ is generated by the elements $\ell n_g$, $\ell \in \mathbb{K}, g \in \G$, $\theta^\Gamma$ is adjusted.
    
    Let $(\Gamma',M') \in \text{Ob} \mathcal{P}pr\G$ with factor set $\sigma'$ and $\varphi \in \text{Hom}_{\mathcal{P}pr\G}(\Gamma, \Gamma')$. We have that $(\theta^{\Gamma'},\sigma')$ is a twisted partial action of $\G$ on $S' = \varphi(S)$. Moreover, $$\varphi(n_g) = \varphi(\Gamma(g)\Gamma(g^{-1})\sigma(g^{-1},g)^{-1}) = \Gamma'(g)\Gamma'(g^{-1})\sigma'(g^{-1},g)^{-1} = n_g',$$ from where it follows that $\varphi(S_g) = S_g'$, for all $g \in \G$. We have that $\varphi(\theta^\Gamma(a)) = \theta^{\Gamma'}(\varphi(a))$, for all $a \in S_{g^{-1}}$, which implies that $\varphi : S \to S'$ is a partial action morphism such that $\varphi|_{S_g} : S_g \to S_g'$ is a monoid morphism for all $g \in \G$.
    
    We already observed that $\sigma(g,h) = \sigma'(g,h)$, for all $(g,h) \in \text{dom} \sigma'$. Since
    \begin{align*}
        \varphi(S_g \cap S_{gh}) = \varphi(Sn_gn_{gh}) = S'n'_gn'_{gh} = S'_g \cap S'_{gh},
    \end{align*}
    for $(g,h) \in \G^{(2)}$, we have that $\varphi(S_g \cap S_{gh}) \neq 0 \implies S'_g \cap S'_{gh} \neq 0 \implies (g,h) \in \text{dom} \sigma'$.

    So the maps
    \begin{align*}
        \text{Ob}\mathcal{P}pr\G & \to \text{Ob}\mathcal{ATP}a\G \\
        \Gamma & \mapsto \theta^\Gamma
    \end{align*} 
    and
    \begin{align*}
        \text{Hom}\mathcal{P}pr\G & \to \text{Hom}Ob\mathcal{ATP}a\G \\
        \varphi & \mapsto \varphi
    \end{align*}
    described above define a functor from $\mathcal{P}pr\G$ to $\mathcal{ATP}a\G$. 
    
    (ii): Let $(\theta, \sigma) \in \text{Ob} \mathcal{TP}a\G$ be a twisted partial action of $\G$ on $T$. We already know that $\Gamma_\theta :\G \to T *_{\theta,\sigma}\G$ is a separating partial projective representation with factor set $\sigma$. Let $(\theta', \sigma') \in \text{Ob} \mathcal{TP}a\G$ be a twisted partial action of $\G$ on $T'$ and $\varphi : (\theta, \sigma) \to (\theta',\sigma')$ a twisted partial action morphism. Since $\varphi(T_g) \subseteq T'_g$, we can consider the map
    \begin{align*}
        \Phi : T *_{\theta,\sigma}\G & \to T' *_{\theta',\sigma'}\G \\
        a\delta_g & \mapsto \varphi(a)\delta_g'.
    \end{align*}
    
    Let $a\delta_g, b\delta_h \in T *_{\theta,\sigma}\G$. We have that
    \begin{align*}
        \Phi(a\delta_g)\Phi(b\delta_h)  = \varphi(a)\delta_g'\varphi(b)\delta_h' & = \begin{cases} \theta'_g(\theta'_{g^{-1}}(\varphi(a))\varphi(b))\sigma'(g,h)\delta_{gh}', \text{ if } (g,h) \in \G^{(2)} \\
        0, \text{ otherwise}
        \end{cases} \\
        & = \begin{cases}
        \theta_g'(\varphi(\theta_{g^{-1}}(a)b))\sigma'(g,h) \delta'_{gh} , \text{ if } (g,h) \in \G^{(2)} \\
        0, \text{ otherwise}
        \end{cases} \\
        & = \begin{cases} 
        \varphi(\theta_g(\theta_{g^{-1}}(a)b))\sigma'(g,h)\delta_{gh}', \text{ if } (g,h) \in \G^{(2)} \\
        0, \text{ otherwise.}
        \end{cases}
    \end{align*}
    
    On the other hand,
    \begin{align*}
        \Phi(a\delta_gb\delta_h) & = \begin{cases} \Phi(\theta_g(\theta_{g^{-1}}(a)b)\sigma(g,h)\delta_{gh}), \text{ if } (g,h) \in \G^{(2)} \\
        0, \text{ otherwise}
        \end{cases} \\
        & = \begin{cases} \varphi(\theta_g(\theta_{g^{-1}}(a)b))\sigma(g,h)\delta'_{gh}, \text{ if } (g,h) \in \G^{(2)} \\
        0, \text{ otherwise.}
        \end{cases}
    \end{align*}
    
    If $(g,h) \in \G^{(2)}$ and $\varphi(T_g \cap T_{gh}) \neq 0$, then $\sigma(g,h) = \sigma'(g,h)$. If $(g,h) \in \G^{(2)}$ and $\varphi(T_g \cap T_{gh}) = 0$, then $\varphi(\theta_g(\theta_{g^{-1}}(a)b)) = 0$. If $(g,h) \notin \G^{(2)}$, then $\sigma(g,h) = \sigma'(g,h) = 0$. In this way we guarantee that $\Phi$ is a $\mathbb{K}$-monoid homomorphism. Besides that,
    \begin{align*}
        \Phi(\Gamma_\theta(g)) = \Phi(1_g\delta_g) = \varphi(1_g)\delta_g' = 1_g'\delta_g' = \Gamma_{\theta'}(g),
    \end{align*}
    so $\Phi$ is a morphism between $\Gamma_\theta$ and $\Gamma_{\theta'}$.

    Thus the maps
    \begin{align*}
        \text{Ob}\mathcal{TP}a\G & \to \text{Ob}\mathcal{SP}pr\G \\
        \theta & \mapsto \Gamma_\theta
    \end{align*}
    and
    \begin{align*}
        \text{Hom}\mathcal{TP}a\G & \to \text{Hom}\mathcal{SP}pr\G \\
        \varphi & \mapsto \Phi
    \end{align*}
    described above define the functor of the first statement.
    
    For the second statement, assume that $\theta$ is adjusted. Consider arbitrary $0 \neq a\delta_g \in T *_{\theta,\sigma}\G$. We have that $a = \ell 1_{g}1_{h_1} \cdots 1_{h_m}$ with $r(h_i) = r(g)$, for all $1 \leq i \leq m$. Hence 
    \begin{align*}
        a\delta_g = \ell 1_{h_1} \cdots 1_{h_m} \delta_{r(g)} 1_g\delta_g = \ell 1_{h_1} \delta_{r(g)} \cdots 1_{h_m} \delta_{r(g)} 1_g\delta_g.
    \end{align*}
    
    Notice that for all $k \in r(g)\G$ we have
    \begin{align*}
        1_k\delta_{r(g)} = \sigma(k^{-1},k)^{-1} 1_{k}\delta_{k} 1_{k^{-1}}\delta_{k^{-1}} = \sigma(k^{-1},k)^{-1} \Gamma_\theta(k)\Gamma_\theta(k^{-1}),
    \end{align*}
    thus
    \begin{align*}
        a\delta_g = \beta \Gamma_\theta(h_1)\Gamma_\theta(h_1^{-1}) \cdots \Gamma_\theta(h_m)\Gamma_\theta(h_m^{-1})\Gamma_\theta(g),
    \end{align*}
    that is, $\Gamma_\theta$ is adjusted. 
    
    (iii): Let $\Gamma \in \text{Ob} \mathcal{P}pr\G$. The morphism $\psi : \Gamma_{\theta^\Gamma} \to \Gamma$ defined in Proposition \ref{proppprtotpa} is an epimorphism if $\Gamma$ is adjusted by definition. Assume that $\Gamma$ is separating. Clearly $\psi(a\delta_g) \neq \psi(b\delta_h)$, for all $g \neq h$, $a \in S_g$ and $b \in S_h$. If $\psi(a\delta_g) = \psi(b\delta_g)$ with $a,b \in S_g \neq 0$, then $a\Gamma(g) = b\Gamma(g)$ and, multiplying both sides by $\Gamma(g^{-1})$ we obtain $an_g\sigma(g,g^{-1}) = bn_g\sigma(g,g^{-1})$. Now, $a,b \in S_g$ yields $an_g = a$ and $bn_g = b$. Since $S$ is $\mathbb{K}$-cancellative, it follows that $a = b$.
    
    (iv): Let $(\theta, \sigma) \in \text{Ob} \mathcal{TP}a\G$ be a twisted partial action of $\G$ on $T$. It follows from Proposition \ref{propcomp} that the morphism $\phi$ is the required isomorphism between $\theta^{\Gamma_\theta}$ and $\bar \theta$.
    
    (v): Just notice that the set of morphisms between any two objects of $\mathcal{AP}pr\G \cap \mathcal{SP}pr\G$ is empty or has only one element and the same holds in $\mathcal{ATP}a\G$, so the restrictions are full and faithful. The result now follows from (iii) and (iv).
    
    (vi): We already know that the domains and codomains of these functors are appropriate, as well as the sets of morphisms between any two objects of one of these categories have 0 or 1 element only. Moreover, by the same arguments given in \cite[Theorem 3(vi)]{dokuchaev2012partialprojectiverep2}, we have that for $\Gamma \in \text{Ob} \mathcal{AP}pr\G$ and $\theta \in \text{Ob} \mathcal{ATP}a\G$, 
    \begin{align*}
        \# \text{Hom}_{\mathcal{AP}pr\G} (\Gamma_\theta,\Gamma) = \# \text{Hom}_{\mathcal{ATP}a\G} (\theta,\theta^\Gamma),
    \end{align*} which concludes the proof.
\end{proof}

 \section*{Declarations}
    
    \subsection*{Author's Contribution}
    All authors wrote the main manuscript text and reviewed the manuscript.
    
    \subsection*{Competing Interests}
    The authors have no competing interests as defined by Springer, or other interests that might be perceived to influence the results and/or discussion reported in this paper.
    
    \subsection*{Ethical Approval}
    Not applicable.
    
    \subsection*{Availability of Data and Materials}
    Data sharing not applicable to this article as no datasets were generated or analysed during the current study.
    
    \subsection*{Funding}
     T. Tamusiunas was partially supported by CNPq (Brazil) through a Productivity Research Fellowship, Grant No. 303411/2025-2, and by CNPq (Brazil) under the Universal Call, Grant No. 403606/2025-0.
\bibliographystyle{abbrvnat}
    \footnotesize{\bibliography{bibliografia}}

\end{document}